\newcommand{\C}{\mathbb{C}}
\newcommand{\F}{\mathbb{F}}
\newcommand{\Q}{\mathbb{Q}}
\newcommand{\Z}{\mathbb{Z}}
\newcommand{\cG}{\mathcal{G}}
\newcommand{\cI}{\mathcal{I}}
\DeclareMathOperator{\Li}{Li}
\DeclareMathOperator{\sqf}{sqf}
\DeclareMathOperator{\ev}{ev}
\numberwithin{equation}{section}
\newtheorem{theorem}{Theorem}[section]
\newtheorem{lemma}[theorem]{Lemma}
\newtheorem{proposition}[theorem]{Proposition}
\newtheorem{corollary}[theorem]{Corollary}
\newtheorem{definition}{Definition}[section]
\title{Density of Special Classes of Polynomials with Squarefree Discriminant}
\author{Gian Cordana Sanjaya}
\begin{document}

\maketitle

\begin{abstract}
In this paper, we consider the problem of determining the density of monic polynomials over $\Z_p$ with squarefree discriminant over various subsets of the set of monic polynomials over $\Z_p$ of fixed degree.
We compute the density of polynomials in each subset whose discriminant is squarefree, and we compute the density of polynomials $f$ in each subset such that $\Z_p[x]/(f(x))$ is the maximal order of $\Q_p[x]/(f(x))$.
\end{abstract}

\section{Introduction}

It is a classical question in analytic number theory to determine, given an integer polynomial $F$ over $n$ variables, the density of integer points $\mathbf{a}$ such that $F(\mathbf{a})$ is squarefree, where the points $\mathbf{a}$ are ordered by some height function.
In the classical case $F(x) = x$, this density is equal to $\zeta(2)^{-1} = 6/\pi^2$.
In general, one conjectures that the density to be computed is equal to the Euler product, over all prime numbers $p$, of the density of $p$-adic points $\mathbf{a}$ such that $p^2$ does not divide $F(\mathbf{a})$, so-called the local density at $p$.

Recently, some work has been done in the case where $F$ is the discriminant of a monic integer polynomial $f$ of degree $n$, viewed as a polynomial in terms of the coefficients of $f$.
In~\cite{BSW2022}, Bhargava, Shankar, and Wang computed the density of monic integer polynomials of degree $n$ with squarefree discriminant.
They also determined the density of monic integer polynomials $f$ of degree $n$ such that $\Z[x]/(f(x))$ is the maximal order of $\Q[x]/(f(x))$.
Previously, the local density of monic polynomials over $\Z_p$ of degree $n$ with squarefree discriminant was computed by Yamamura~\cite{Yam1991}, and the local density of monic polynomials $f \in \Z_p[x]$ of degree $n$ such that $\Z_p[x]/(f(x))$ is the maximal order of $\Q_p[x]/(f(x))$ was computed by Lenstra~\cite{ABZ2007}.

In this paper, we compute the density of monic polynomials $f \in \Z_p[x]$ of degree $n$ with squarefree discriminant, assuming some restrictions on the coefficients of $f$.
We also compute the density of monic polynomials $f \in \Z_p[x]$ of degree $n$ such that $\Z_p[x]/(f(x))$ is the maximal order of $\Q_p[x]/(f(x))$, also assuming some restrictions on the coefficients of $f$.
Our method allows us not only to compute the asymptotic densities as $n \to \infty$, but also to compute the \emph{exact} densities.

Let $V_n(\Z_p)$ be the set of monic polynomials over $\Z_p$ of degree $n$.
For any subset $\Sigma \subseteq V_n(\Z_p)$, we denote by $P^{\sqf}(\Sigma)$ the density of $f \in \Sigma$ such that $f$ has squarefree discriminant, and we denote by $P^{\max}(\Sigma)$ the density of $f \in \Sigma$ such that $\Z_p[x]/(f(x))$ is the maximal order of $\Q_p[x]/(f(x))$.
We view $\Sigma$ as a subset of $\Z_p^{n - k}$ (with positive measure) if $\Sigma$ is defined by exactly $k$ coefficients fixed and the other $n - k$ coefficients defined by congruence conditions.

We first consider the set of polynomials in $V_n(\Z_p)$ whose $x^{n - 1}$-coefficient is fixed.
We prove:

\begin{theorem}\label{thm-local:sub1}
Let $n \geq 2$ and $p$ be a prime number.
Fix $b_1 \in \Z_p$, and denote
\[ V_n(\Z_p)_{a_1 = b_1} = \{f(x) = x^n + a_1 x^{n - 1} + \ldots + a_n \in \Z_p[x] : a_1 = b_1\}. \]
If $p = 2$, then
\[ P^{\sqf}(V_n(\Z_2)_{a_1 = b_1}) = \begin{cases} 1 & \text{if } n = 2 \text{ and } 2 \nmid b_1, \\ 0 & \text{if } n = 2 \text{ and } 2 \mid b_1, \\ 1/2 & \text{if } n \geq 3, \end{cases} \qquad P^{\max}(V_n(\Z_2)_{a_1 = b_1}) = \begin{cases} 1 & \text{if } n = 2 \text{ and } 2 \nmid b_1, \\ 1/2 & \text{if } n = 2 \text{ and } 2 \mid b_1, \\ 3/4 & \text{if } n \geq 3. \end{cases} \]
If $p$ is odd, then
\[ P^{\sqf}(V_n(\Z_p)_{a_1 = b_1}) = 1 - \frac{3p - 1}{p^2 (p + 1)} + O\left(\frac{\gcd(n, p)}{p^{n - 1}}\right), \qquad 
    P^{\max}(V_n(\Z_p)_{a_1 = b_1}) = 1 - \frac{1}{p^2}, \]
    where the implied constants are absolute.
If $p$ is odd, then $P^{\sqf}(V_n(\Z_p)_{a_1 = b_1})$ is positive, independent of $b_1$ if $p \nmid n$, and depends strictly on whether $b_1$ is a unit or not a unit if $p \mid n$.
\end{theorem}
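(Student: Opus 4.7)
My plan rests on two measure-preserving symmetries of $V_n(\Z_p)$ that preserve both $v_p(\operatorname{disc}(f))$ and the maximality of $\Z_p[x]/(f(x))$: the translation $\tau_c \colon f(x) \mapsto f(x + c)$, sending $a_1 \mapsto a_1 + nc$; and the scaling $\mu_\lambda \colon f(x) \mapsto \lambda^n f(x/\lambda)$ for $\lambda \in \Z_p^\times$, sending $a_1 \mapsto \lambda a_1$. When $p \nmid n$, $\tau_{-b_1/n}$ identifies $V_n(\Z_p)_{a_1 = b_1}$ with $V_n(\Z_p)_{a_1 = 0}$, so $P^{\sqf}$ and $P^{\max}$ are independent of $b_1$; integrating over $b_1 \in \Z_p$ then gives $P^\bullet(V_n(\Z_p)_{a_1 = b_1}) = P^\bullet(V_n(\Z_p))$, and the claimed odd-$p$ formulas follow by invoking Yamamura's and Lenstra's results. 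When $p \mid n$, $\tau_c$ only changes $b_1$ by multiples of $p$; the factorization analysis below will show that the density depends on $b_1$ only through $\bar b_1 := b_1 \bmod p$, and the transitive $\mu_\lambda$-action on $\F_p^\times$ then leaves only the two cases $\bar b_1 = 0$ and $\bar b_1 \neq 0$.

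\textbf{Exact computation via factorization types.} For the exact densities I stratify $V_n(\Z_p)_{a_1 = b_1}$ by the factorization type of $\bar f := f \bmod p$, subject to $\bar a_1 = \bar b_1$. The standard description of the $p$-adic valuation of the discriminant gives $v_p(\operatorname{disc}(f)) \leq 1$ iff $\bar f$ is squarefree, or $\bar f = \bar g^2 \bar h$ with $\bar g$ linear, $\bar h$ squarefree coprime to $\bar g$, together with a mod-$p^2$ lift condition; an analogous Dedekind-type criterion governs maximality. A key technical step is to verify that the lift condition still has probability $(p-1)/p$ given the constraint $a_1 = b_1$: the two linear functionals on $f \bmod p^2$ involved (namely the $x^{n-1}$-coefficient, and $f(\alpha)/p \bmod p$ for $\bar g = x - \bar\alpha$) are linearly independent as functionals on the space of residue lifts, so the conditioning preserves the $(p-1)/p$ probability. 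The resulting counts in $\F_p[x]$ with prescribed $x^{n-1}$-coefficient are computed via zeta-function manipulations; they are uniform in $\bar b_1$ when $p \nmid n$ (since translation in $\F_p[x]$ then acts transitively on the prescribed values), reproducing the Yamamura/Lenstra densities, and produce the main term $1 - (3p-1)/(p^2(p+1))$ with error $O(\gcd(n,p)/p^{n-1})$ for $P^{\sqf}$ when $p \mid n$; for $P^{\max}$ the computation yields the exact value $1 - 1/p^2$ in all cases, owing to a cancellation in the Dedekind criterion when summed over all relevant factorization types.

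\textbf{Prime $2$, qualitative statements, and main obstacle.} For $p = 2$, $n = 2$, direct computation from $\operatorname{disc}(f) = b_1^2 - 4 a_2$ suffices: odd $b_1$ makes the discriminant a unit (so $P^{\sqf} = P^{\max} = 1$), while even $b_1$ forces $4 \mid \operatorname{disc}(f)$ (giving $P^{\sqf} = 0$), and $P^{\max} = 1/2$ then follows from classifying quadratic $\Z_2$-orders via the residue of $a_2 - (b_1/2)^2$ modulo $8$. For $p = 2$, $n \geq 3$, the stratification strategy of the previous paragraph applies with $2$-adic lift probabilities, and the small cardinality of $\F_2$ yields the clean values $1/2$ and $3/4$. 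Positivity of $P^{\sqf}$ for odd $p$ is immediate from its main term; independence of $b_1$ when $p \nmid n$ was already established above; strict dependence on ``unit versus non-unit'' when $p \mid n$ follows by checking that the $\F_p[x]$-counts $S_n(\bar b_1) + \frac{p-1}{p} D_n(\bar b_1)$ (where $S_n, D_n$ count squarefree and ``$(x - \bar\alpha)^2 \bar h$''-type polynomials with $\bar a_1 = \bar b_1$, respectively) are strictly different for $\bar b_1 = 0$ versus $\bar b_1 \in \F_p^\times$, via a direct M\"obius/zeta calculation. The principal obstacle will be uniformly controlling the error $O(\gcd(n,p)/p^{n-1})$ when $p \mid n$: the $\bar a_1 = \bar b_1$ constraint breaks the natural translation symmetry on $\F_p[x]$, forcing a delicate residue-class analysis in the zeta-function manipulation, with the $\gcd(n,p)$ factor reflecting exactly how translations in $\F_p[x]$ fail to mix fibers over $\bar a_1$ when $p \mid n$.
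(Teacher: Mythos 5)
Your symmetry reductions are sound and, for the case $p \nmid n$ with $p$ odd, they give a genuinely more elementary route than the paper: translation $\tau_{-b_1/n}$ plus averaging over $b_1$ does reduce everything to Yamamura's and Lenstra's exact formulas for $V_n(\Z_p)$, and your direct treatment of $p=2$, $n=2$ (including the classification of quadratic $\Z_2$-orders for even $b_1$) is correct. The observation that the mod-$p^2$ lift condition retains probability $(p-1)/p$ under the constraint $a_1 = b_1$ is also right, though the cleaner justification is simply that the constant coefficient is unconstrained and $f(-\tilde c) \bmod p^2$ varies bijectively with it (this is the paper's Proposition~\ref{prop-density:sqf-1-lift-count}).

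The genuine gap is in the case $p \mid n$, which is the actual content of the theorem (the paper itself notes that $p \nmid n$ is unsurprising). There your entire argument reduces to counting monic squarefree polynomials, and polynomials of type $(x+c)^2 \cdot(\text{squarefree})$, and the signed Dedekind sums $\sum_u \mu(u) p^{-\deg u}\#\{v : u^2 v \text{ has prescribed trace}\}$, all over $\F_p[x]$ with prescribed $x^{n-1}$-coefficient --- precisely when translation no longer acts transitively on that coefficient. You assert the answers (the exact value $1-1/p^2$ for $P^{\max}$ ``owing to a cancellation,'' the error term $O(\gcd(n,p)/p^{n-1})$, and the strict unit/non-unit dependence) and defer the proofs to unspecified ``zeta-function manipulations,'' explicitly flagging this as the principal obstacle. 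But this is not a routine residue-class refinement of the standard zeta computation: the paper's whole mechanism exists to handle it. Concretely, one detects the condition $\bar a_1 = \bar b_1$ by additive characters of $\F_p$; since $u \mapsto a_1(u)$ is a monoid homomorphism $\F_p[x]_m \to \F_p$, each $\chi\circ\varphi_1$ is completely multiplicative with $L_{\chi\circ\varphi_1}(T) = 1$ for $\chi \neq 1$ (Lemma~\ref{lem-sub1:char-gen}), and the claimed ``cancellation'' for $P^{\max}$ is exactly the statement that for odd $p$ every nontrivial $\chi$ has $\chi^2 \neq 1$, so the nontrivial characters contribute $[1]_n = 0$ to the maximality sum, while for $P_1^{\sqf}$ they contribute the term $\iota_{n,p}(\overline{b_1})$ that produces both the $\gcd(n,p)$ in the error and the unit/non-unit dichotomy. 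Without supplying this (or an equivalent) computation, the $p \mid n$ half of the theorem --- including the exactness of $P^{\max} = 1 - 1/p^2$ --- remains unproved.
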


In particular, the asymptotic densities of $V_n(\Z_p)_{a_1 = b_1}$ as $n \to \infty$ are equal to those of $V_n(\Z_p)$ as obtained by Yamamura~\cite[Proposition 3]{Yam1991} (for squarefree discriminant) and by Lenstra~\cite[Proposition 3.5]{ABZ2007} (for maximality).
It is not surprising that the densities of $V_n(\Z_p)_{a_1 = b_1}$ is independent of $b_1$ if $p \nmid n$; the point of interest is in the case $p \mid n$.
Note that for $n \geq 3$, we have $\gcd(n, p)/p^{n - 1} \ll_n 1/p^2$, which implies that for any integer $b_1$,
\[ \prod_p P^{\sqf}(V_n(\Z_p)_{a_1 = b_1}) > 0. \]

Next, we consider the set of polynomials in $V_n(\Z_p)$ whose $x^{n - 1}$- and $x^{n - 2}$-coefficients are fixed.

\begin{theorem}\label{thm-local:sub2}
Let $n \geq 3$ and $p$ be a prime number.
Fix $b_1, b_2 \in \Z_p$, and denote
\[ V_n(\Z_p)_{a_1 = b_1, a_2 = b_2} = \{f(x) = x^n + a_1 x^{n - 1} + \ldots + a_n \in \Z_p[x] : a_1 = b_1, a_2 = b_2\}. \]
If $p$ is odd, then
\begin{align*}
    P^{\sqf}(V_n(\Z_p)_{a_1 = b_1, a_2 = b_2}) &= 1 - \frac{3p - 1}{p^2 (p + 1)} + O\left(\frac{\gcd(n, p)}{p^{\lfloor 3n/4 \rfloor - 1}}\right), & P^{\sqf}(V_n(\Z_2)_{a_1 = b_1, a_2 = b_2}) &= \frac{1}{2}, \\
    P^{\max}(V_n(\Z_p)_{a_1 = b_1, a_2 = b_2}) &= 1 - \frac{1}{p^2} + O\left(\frac{\gcd(n, p)}{p^{\lfloor 3n/4 \rfloor + \lfloor n/2 \rfloor - 1}}\right), & P^{\max}(V_n(\Z_2)_{a_1 = b_1, a_2 = b_2}) &= \frac{3}{4},
\end{align*}
    where the implied constants are absolute.
These densities are positive if $n \geq 4$.
If $b_1$ and $b_2$ are integers, then $P^{\max}(V_n(\Z_p)_{a_1 = b_1, a_2 = b_2}) = 1 - 1/p^2$ holds for all $p$ if and only if $n$ is a power of $2$ and $2 b_2 - (1 - 1/n) b_1^2 = 0$.
\end{theorem}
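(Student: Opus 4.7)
My plan is to extend the stratification argument of Theorem~\ref{thm-local:sub1} to accommodate the additional constraint $a_2 = b_2$. As in the one-coefficient case, I would stratify $V_n(\Z_p)_{a_1 = b_1, a_2 = b_2}$ by the mod-$p$ factorization type $\bar f = \prod \bar g_i^{e_i}$ in $\F_p[x]$; Hensel's lemma then decomposes $f = \prod f_i$ in $\Z_p[x]$ with $\bar f_i = \bar g_i^{e_i}$, and both the squarefreeness of the discriminant of $f$ and the maximality of $\Z_p[x]/(f)$ reduce to local conditions on the Hensel factors $f_i$ with $e_i \geq 2$. Newton's identities translate the two constraints $a_1 = b_1$ and $a_2 = b_2$ into two linear conditions on the power sums of the roots, which then get distributed across the strata.

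For the main term, the stratum $\bar f$ squarefree contributes $1$, and the first-order corrections come from factorization types with a single low-degree repeated factor. Because fixing both $a_1$ and $a_2$ restricts each relevant stratum and the ambient set by a common factor $p^{-2}$, the density ratios are unchanged from the unconstrained case, reproducing the values $1 - \frac{3p-1}{p^2(p+1)}$ for $P^{\sqf}$ and $1 - \frac{1}{p^2}$ for $P^{\max}$ established in~\cite{Yam1991, BSW2022, ABZ2007}.

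The error term is the principal technical content. It arises from factorization types $\bar f = \bar g^{e_0}\bar h$ with $e_0$ close to $n$, where the two power-sum constraints interact nontrivially with the local structure of the Hensel factor $f_0$. The improved exponent $\lfloor 3n/4 \rfloor$ (compared to $n - 1$ in Theorem~\ref{thm-local:sub1}) reflects an additional saving of roughly $p^{n/4}$ from fixing the second coefficient, and the extra $p^{\lfloor n/2 \rfloor}$ in the $P^{\max}$ bound encodes the stronger non-maximality criterion (the roots of $f_0$ must collapse to a single residue mod $p$ in a more restrictive way). The $\gcd(n,p)$ factor appears because, on the extremal stratum $\bar f = (x - \bar r)^n$, the constraint $b_1 \equiv -n \bar r \pmod{p}$ admits $\gcd(n,p)$ solutions in $\bar r$. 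The hardest step will be carrying out this dimension count cleanly, particularly for $p \mid n$ where Newton's identities degenerate and the interaction of the two constraints with wild ramification becomes subtle; this is also where positivity for $n \geq 4$ must be verified.

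The case $p = 2$ admits direct enumeration: mod-$2$ factorization types are limited to powers of $x$, $x+1$, and products thereof, and a finite case analysis on $(b_1 \bmod 2, b_2 \bmod 2)$ yields the exact values $1/2$ and $3/4$. Finally, the condition $2b_2 - (1 - 1/n)b_1^2 = 0$ is precisely the requirement that the ``diagonal'' polynomial $(x + b_1/n)^n$ has the fixed first two coefficients. For the final characterization I would show that $P^{\max}(V_n(\Z_p)_{a_1=b_1,a_2=b_2}) = 1 - 1/p^2$ holds exactly at $p$ iff the contribution from the stratum $\bar f = (x - \bar r)^n$ matches its unrestricted expectation (so no net correction survives the restriction to fixed $a_1, a_2$), and then trace that this holds simultaneously at every prime $p$ iff $n$ is a power of $2$ together with the stated integrality identity on $b_1, b_2$; the ``power of $2$'' requirement enters because only at primes dividing $n$ can the stratum create a genuine imbalance, so an odd prime factor of $n$ would yield a non-vanishing error at that prime.
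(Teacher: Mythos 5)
Your proposal takes a different route from the paper (which works via Fourier analysis on the group $1 + y\F_p[y]/(y^3)$, writing the indicator of $a_1 = b_1, a_2 = b_2$ as a sum of characters $\chi \circ \varphi_2$ and evaluating the resulting completely multiplicative sums with quadratic Gauss sums), but as written it has a genuine gap at its central step. The error term $O(p^{-(\lfloor 3n/4\rfloor - 1)})$ does \emph{not} come from strata $\bar f = \bar g^{e_0}\bar h$ with $e_0$ close to $n$, nor from any dimension count. In an inclusion--exclusion over square divisors, the dangerous terms are $u = d^2 v$ with $\deg d$ near $n/2$ (so $e_0 = 2$ but $\deg \bar g$ large, or many squared factors), and the quantity you must bound is a M\"obius-weighted sum $\sum_{\deg d \approx n/2} \mu(d)$ restricted to the fiber of the map $d \mapsto (a_1(d), a_2(d))$. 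The trivial bound on this sum is about $p^{\deg d - 2}$, which only yields a relative error of $p^{-n/2+O(1)}$; reaching $p^{-3n/4+O(1)}$ requires square-root cancellation in these restricted M\"obius/character sums. In the paper this is exactly the content of Lemma~\ref{lem-sub2:char-gen} together with Theorem~\ref{thm-sub2:step2}: the series $L_{\chi\circ\varphi_2}(T) = 1 + C_\chi T$ with $|C_\chi| = \sqrt{p}$ (a Gauss sum), which is the Riemann-hypothesis input. Your plan never identifies this cancellation or a mechanism to produce it, so the claimed exponent is unsupported; relatedly, the "common factor $p^{-2}$" justification of the main term is the same unproved equidistribution statement in disguise.

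A second, smaller problem is the final characterization. Your premise that "only at primes dividing $n$ can the stratum create a genuine imbalance" is false: by Theorem~\ref{thm-sub2:main}, for every odd $p \nmid n$ there is a correction term $\frac{(-1)^{\lfloor n/2\rfloor}}{p^{n+\lfloor n/2\rfloor}}\left(\frac{n}{p}\right)\left(\frac{2}{p}\right)^{\lfloor n/2\rfloor}\delta_{\lceil n/2\rceil, p}\bigl(2\overline{b_2} - (1 - 1/\overline{n})\overline{b_1}^2\bigr)$, and it vanishes only when $\lceil n/2\rceil$ is even \emph{and} $2b_2 - (1-1/n)b_1^2 \equiv 0 \pmod p$. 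The "power of $2$" condition emerges from reconciling the vanishing conditions across the cases $p \nmid n$, $p \mid n$ with $p \mid b_1$, and $p \mid n$ with $p \nmid b_1$ (the relevant $\delta$'s have shifted indices of opposite parity), not from localizing the obstruction at $p \mid n$. Your interpretation of $2b_2 - (1-1/n)b_1^2 = 0$ via $(x + b_1/n)^n$ is correct, but the argument you sketch around it would not produce the stated iff.
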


The quantity $2 b_2 - (1 - 1/n) b_1^2$ has a natural interpretation as an invariant of $V_n(\Z_p)_{a_1 = b_1, a_2 = b_2}$;
    there is a bijection of form $f(x) \mapsto f(x + c)$ between $V_n(\Z_p)_{a_1 = b_1, a_2 = b_2}$ and $V_n(\Z_p)_{a_1 = c_1, a_2 = c_2}$ if and only if $2 b_2 - (1 - 1/n) b_1^2 = 2 c_2 - (1 - 1/n) c_1^2$.
Note that the case $n = 4$, $(b_1, b_2) = (0, 0)$ has been considered by Sanjaya and Wang in~\cite{SW2023}.

Now we consider the set of polynomials in $V_n(\Z_p)$ whose constant coefficient is a unit.
We prove:

\begin{theorem}\label{thm-local:copconst}
Let $n \geq 2$ and $p$ be a prime number.
Denote
\[ V_n(\Z_p)_{p \nmid a_n} = \{f(x) = x^n + a_1 x^{n - 1} + \ldots + a_n \in \Z_p[x] : p \nmid a_n\}. \]
Then
\begin{align*}
    P^{\sqf}(V_n(\Z_p)_{p \nmid a_n}) &= \begin{cases}
        \dfrac{2}{3} \left(1 - \dfrac{(-1)^n}{2^n}\right) & \text{if } p = 2, \\
        1 - \dfrac{3p - 1}{p(p + 1)^2} + O\left(\dfrac{n}{p^{n - 1}}\right) & \text{if } p \neq 2, \end{cases} \\
    P^{\max}(V_n(\Z_p)_{p \nmid a_n}) &= 1 - \frac{1}{p^2 + p + 1} + O\left(\frac{1}{p^{n + \lceil n/2 \rceil}}\right),
\end{align*}
    where the implied constants are absolute, and these densities are positive.
\end{theorem}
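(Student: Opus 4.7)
The plan is to reduce to a local analysis via Hensel's lemma and assemble into a single-variable generating function whose dominant pole yields the asymptotic and whose subdominant poles control the error term. For $f \in V_n(\Z_p)_{p \nmid a_n}$, write $\bar f = \prod_i \bar g_i^{e_i}$ as a product of distinct monic irreducibles of degrees $d_i$; the constraint $p \nmid a_n$ forces $\bar g_i \neq x$ for all $i$. Hensel's lemma gives $f = \prod_i G_i$ in $\Z_p[x]$ with $G_i \equiv g_i^{e_i} \pmod{p}$, and both conditions are multiplicative in this decomposition: the resultants $\mathrm{Res}(G_i, G_j)$ for $i \neq j$ are units, so $v_p(\mathrm{disc}(f)) = \sum_i v_p(\mathrm{disc}(G_i))$, and $\Z_p[x]/(f) \cong \prod_i \Z_p[x]/(G_i)$ is maximal iff each factor is.

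For each factor with $e_i \geq 2$, I compute the local probabilities. By Dedekind's criterion, $\Z_p[x]/(G_i)$ is $p$-maximal iff $\bar g_i$ does not divide $\overline{(G_i - g_i^{e_i})/p} \in \F_p[x]$; since $G_i$ is uniformly distributed subject to $\bar G_i = \bar g_i^{e_i}$, the reduction mod $\bar g_i$ is uniform in $\F_{p^{d_i}}$, so the probability equals $(p^{d_i} - 1)/p^{d_i}$ independently of $e_i$. For squarefree discriminant, passing to the unramified extension $\Z_{p^{d_i}}/\Z_p$, Hensel gives $G_i = \prod_{j=1}^{d_i} G_{i,j}$ into Galois-conjugate factors with coprime mod-$p$ reductions, so $v_p(\mathrm{disc}(G_i)) = d_i \cdot v_p(\mathrm{disc}(G_{i,1}))$ is divisible by $d_i$. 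Writing $\mathrm{disc}(G_{i,1}) = \mathrm{disc}(R_{\max}) \cdot [R_{\max} : \Z_p[y]/(G_{i,1})]^2$ where $R_{\max}$ denotes the maximal order, and noting that any totally ramified local factor contributes at least $e_i - 1$ to $v_p(\mathrm{disc}(R_{\max}))$ while any non-maximality of $\Z_p[y]/(G_{i,1})$ contributes at least $2$, one concludes $v_p(\mathrm{disc}(G_{i,1})) \geq e_i - 1$. Hence $v_p(\mathrm{disc}(G_i)) = 1$ is possible only when $(d_i, e_i) = (1, 2)$, occurring with probability $(p-1)/p$ for odd $p$ and $0$ for $p = 2$ (since $\mathrm{disc}(y^2 + 2by + 2c) \in 4 \Z_2$).

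Summing over factorization types yields Euler products. Using the identity $\prod_{\bar g \neq x}(1 - u^{\deg \bar g})^{-1} = (1-u)/(1-pu)$ applied to both $u$ and $u^2/p$, the $P^{\max}$ generating function simplifies to
\[
T^{\max}(u) = \prod_{\bar g \neq x} \frac{1 - (u^2/p)^{\deg \bar g}}{1 - u^{\deg \bar g}} = \frac{p(1-u)^2(1+u)}{(p - u^2)(1 - pu)},
\]
with $P^{\max}(V_n(\Z_p)_{p \nmid a_n}) = \frac{p}{p-1} \cdot p^{-n} [u^n] T^{\max}(u)$. The dominant pole at $u = 1/p$ has residue $(p-1)(p+1)/(p^2+p+1)$, giving the leading term $1 - 1/(p^2+p+1)$; the subdominant poles at $u = \pm \sqrt{p}$ contribute $O(p^{-(n + \lceil n/2 \rceil)})$. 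An analogous GF for $P^{\sqf}$ at odd $p$, combining contributions from squarefree $\bar f$ and from $\bar f = \bar g^2 \bar h$ with $\bar g$ linear, produces the claimed asymptotic with error $O(n/p^{n-1})$ from a double pole at $u = -1$. For $p = 2$ the $e_i = 2$ contribution vanishes, so only squarefree $\bar f$ matter; partial fractions applied to $(1 - 2u^2)/((1-2u)(1+u))$ give the exact count $(2^n - (-1)^n)/3$, yielding $P^{\sqf}(V_n(\Z_2)_{2 \nmid a_n}) = \tfrac{2}{3}(1 - (-1)^n/2^n)$ after normalizing by the measure $1/2$ of the ambient set.

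The main obstacle is the bound $v_p(\mathrm{disc}(G_{i,1})) \geq e_i - 1$ for $e_i \geq 2$, requiring a careful case analysis of the local structure of $\Z_p[y]/(G_{i,1})$ (totally ramified vs.\ reducible vs.\ non-maximal); once this is in place, the generating-function manipulation and the extraction of asymptotics and error terms follow standard techniques. Positivity of all densities for $n \geq 2$ is immediate from the explicit formulas.
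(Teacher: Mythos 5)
Your proposal is correct, and it reproduces exactly the generating functions the paper uses --- your $T^{\max}(u)$ is the paper's $L_{\mathbf{1}_x}(T)/L_{\mathbf{1}_x}(T^2/p)$ with $L_{\mathbf{1}_x}(T)=(1-T)/(1-pT)$, and your normalization $\tfrac{p}{p-1}p^{-n}$ matches $\tfrac{1}{(p-1)p^{n-1}}$ --- but the derivation of those generating functions is genuinely different. The paper runs everything through its general Fourier-analytic framework of \S\ref{section:density-formula-unit} (which, for this $\Sigma$, degenerates to the trivial group, so the content is really in the two structural inputs): for squarefree discriminant it simply cites the valuation criterion of Ash--Brakenhoff--Zarrabi (Lemma~\ref{lem:sqfree-criterion}), and for maximality it combines Dedekind's criterion with a M\"obius inversion over square divisors $u^2\mid\overline{f}$, the key computation being $\lambda_p(\Sigma\cap(p,\tilde u)^2)=p^{-\deg u}\lambda_p(\{u^2\mid\overline f\})$. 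You instead factor $f$ by Hensel's lemma, exploit multiplicativity of $v_p(\mathrm{disc})$ and of maximality across the factors, compute the conditional probability $1-p^{-d_i}$ of maximality for each repeated factor directly (rather than by inclusion--exclusion), and re-derive the ABZ valuation bound $v_p(\mathrm{disc}(G_{i,1}))\ge e_i-1$ from local field theory; the Euler product over irreducibles $\overline g\ne x$ then assembles these into the same rational functions. Your route is more self-contained and probabilistically transparent for this single theorem, at the cost of re-proving the ABZ criterion (your sketch of the bound $v_p(\mathrm{disc}(G_{i,1}))\ge e_i-1$ is correct but is indeed the one place requiring care: the clean argument is that $\Z_{p^{d_i}}[y]/(G_{i,1})$ is local, so either it is a totally ramified DVR, forcing different valuation $\ge e_i-1$, or it is non-maximal, forcing the index to contribute $\ge 2$); the paper's route is less elementary here but is what generalizes to the nontrivial groups $G$ needed for Theorems~\ref{thm-local:sub1}--\ref{thm-local:sub2} and \ref{thm-local:fixconst}--\ref{thm-local:sub1-copconst}, where the per-factor probabilities are no longer independent of the ambient congruence conditions. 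The pole-extraction and error analysis at the end are identical to the paper's partial-fraction computation, and your error exponents and the $p=2$ exact value check out.
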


Note the difference of the densities of $V_n(\Z_p)_{p \nmid a_n}$ and $V_n(\Z_p)$ in the limit as $n \to \infty$.
The main reason for the difference is that modulo $p$, all elements of $V_n(\Z_p)_{p \nmid a_n}$ are not divisible by $x$, while such divisibility restrictions do not exist for $V_n(\Z_p)$.

Next, we consider the set of polynomials in $V_n(\Z_p)$ whose constant coefficient is a fixed $p$-adic unit.

\begin{theorem}\label{thm-local:fixconst}
Let $n \geq 2$ and $p$ be a prime number.
Fix some $b_n \in \Z_p^{\times}$, and denote
\[ V_n(\Z_p)_{a_n = b_n} = \{f(x) = x^n + a_1 x^{n - 1} + \ldots + a_n \in \Z_p[x] : a_n = b_n\}. \]
Then
\begin{align*}
    P^{\sqf}(V_n(\Z_p)_{a_n = b_n}) &= \begin{cases} \dfrac{2}{3} \left(1 - \dfrac{(-1)^n}{2^n}\right) & \text{if } p = 2, \\ 1 - \dfrac{3p - 1}{p(p + 1)^2} + O\left(\dfrac{n}{p^{2 \lceil n/2 \rceil - 2}}\right) & \text{if } p \neq 2,
        \end{cases} \\
    P^{\max}(V_n(\Z_p)_{a_n = b_n}) &= 1 - \frac{1}{p^2 + p + 1} + O\left(\frac{1}{p^{3 \lceil n/2 \rceil - 1}}\right),
\end{align*}
    where the implied constants are absolute, and these densities are positive.
The quantity $P^{\sqf}(V_n(\Z_p)_{a_n = b_n})$ is independent of $b_n$ if and only if $\gcd(n, p - 1) = 1$.
The quantity $P^{\max}(V_n(\Z_p)_{a_n = b_n})$ depends strictly on whether $b_n$ is a square or not a square mod $p$ if $p$ is odd and $n$ is even, and is independent of $b_n$ otherwise.
\end{theorem}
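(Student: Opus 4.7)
Following the Hensel-factorization paradigm used by Yamamura and Lenstra for $V_n(\Z_p)$, I would stratify $V_n(\Z_p)_{a_n = b_n}$ by the factorization type of $\bar{f} \in \F_p[x]$. The key observation is that $\bar{f}(0) = \bar{b}_n \in \F_p^{\times}$, so no irreducible factor of $\bar{f}$ equals $x$---this single exclusion already explains why the main terms here differ from those for the unrestricted $V_n(\Z_p)$ and match instead a ``coprime-to-$x$'' local factor.

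\textbf{Reduction to local densities.} For each factorization $\bar{f} = \prod_i \bar{g}_i^{e_i}$ into distinct irreducibles with each $\bar{g}_i \neq x$, Hensel's lemma writes $f$ uniquely as $\prod_i F_i$ with $F_i$ monic and $\bar{F}_i = \bar{g}_i^{e_i}$. The identity $\mathrm{disc}(f) = \prod_i \mathrm{disc}(F_i) \cdot \prod_{i<j} \mathrm{Res}(F_i, F_j)^2$, combined with $\mathrm{Res}(F_i, F_j) \in \Z_p^{\times}$, shows that squarefreeness of $\mathrm{disc}(f)$ reduces to a joint condition on the valuations $v_p(\mathrm{disc}(F_i))$; similarly, maximality of $\Z_p[x]/(f)$ reduces to maximality of each $\Z_p[x]/(F_i)$. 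The constraint $\prod_i F_i(0) = b_n$ is a codimension-one condition among units, and because neither condition being studied restricts $F_i(0)$ beyond its residue class, this constraint decouples cleanly from the local density calculations (up to a uniform $1/p$ factor absorbed by the measure normalization on the slice $a_n = b_n$).

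\textbf{Main term, error term, and $p = 2$.} Summing the local densities $\rho^{\sqf}_{d,e}$ and $\rho^{\max}_{d,e}$---computed in the paper's earlier sections---over all factorization patterns with $\bar{g}_i \neq x$ would yield the asymptotic main terms $1 - (3p-1)/(p(p+1)^2)$ and $1 - 1/(p^2+p+1)$. The error comes from patterns with large multiplicities; the dominant contributors are the ``double-double'' stratum $\bar{f} = \bar{g}_1^{n/2} \bar{g}_2^{n/2}$ (present only when $n$ is even) and the totally ramified stratum $\bar{f} = \bar{g}^n$, leading to the stated $O(n/p^{2\lceil n/2\rceil - 2})$ and $O(1/p^{3\lceil n/2\rceil - 1})$ bounds. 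For $p = 2$ the set of available irreducibles is small enough to enumerate factorization patterns directly; the closed form $\tfrac{2}{3}(1 - (-1)^n/2^n)$ would then follow by direct summation or by a recursion on $n$ (alternating-series-type collapse of the contributions from the two linear factors $x-1$ and the small-degree irreducibles).

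\textbf{Dependence on $b_n$ and main obstacle.} The densities depend on $b_n$ only through strata whose very existence forces a constraint on $\bar{b}_n$. For $P^{\sqf}$, the totally ramified stratum $\bar{f} = (x-\alpha)^n$ requires $\bar{b}_n = (-\alpha)^n$, so this stratum (and hence the density) genuinely depends on $b_n$ exactly when $\bar{b}_n$ need not be an $n$-th power in $\F_p^{\times}$---equivalently, whenever $\gcd(n, p-1) > 1$. For $P^{\max}$, maximality renders most ramified strata contribute uniformly, and a careful analysis shows the only residual dependence comes from strata of the form $\bar{f} = (x-\alpha)^2 \bar{h}$ with $\bar{h}$ squarefree of degree $n-2$: this stratum exists iff $\bar{b}_n/\bar{h}(0)$ is a square, producing a square/nonsquare dichotomy precisely when $p$ is odd and $n$ is even (the parity ensures the $(-1)^n$-sign ambiguity does not wash out the class). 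The main obstacle will be the careful bookkeeping required to pinpoint the exact exponent in the error terms: one must identify that the worst strata come from the ``double-double'' partition (for even $n$) and the totally ramified case (for odd $n$), and handle the interaction between the Hensel-lift parameters and the hyperplane $\prod_i F_i(0) = b_n$ exactly rather than bounding crudely.
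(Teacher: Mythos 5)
Your stratification-by-factorization-type strategy is genuinely different from the paper's, which instead builds a ``unit-admissible triple'' $(\F_p^{\times}, \ev_0, w)$ with $\ev_0(u) = u(0)$, expands the indicator of $\{u(0) = \overline{b_n}\}$ in characters of $\F_p^{\times}$, and reads the answer off generating series $L_{\chi \circ \ev_0}(T)$ (which equal $1$ for $\chi \neq 1$). The whole difficulty of the problem is the equidistribution of polynomials with \emph{fixed} constant term $\overline{b_n}$ among factorization types, and that is exactly what the character sums quantify. Your proposal never introduces a substitute for this, and the step where you assert that the constraint $\prod_i F_i(0) = b_n$ ``decouples cleanly\ldots up to a uniform $1/p$ factor'' is precisely where the content is missing: the decoupling is false at the order of the error terms you are trying to pin down, and the failure of decoupling is what produces the $\kappa_{n,p}(\overline{b_n})$ and $\left(\frac{b_n}{p}\right)$ corrections in the paper's exact formulas (Theorem~\ref{thm-fixconst:main}).

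Two concrete errors follow from this. First, for $P^{\max}$ you locate the square/nonsquare dichotomy in the strata $\overline{f} = (x-\alpha)^2 \overline{h}$ with $\overline{h}$ squarefree of degree $n-2$; but summing over $\overline{h}$ (whose constant term $\overline{h}(0)$ ranges over all of $\F_p^{\times}$) washes out any condition on $\overline{b_n}$, and these strata exist for every $n$, so they cannot explain a phenomenon confined to even $n$. The actual source is the perfect-square strata $\overline{f} = u^2$ with $\deg u = n/2$ (the $\deg(u) = n/2$ term of the M\"obius sum in Lemma~\ref{lem-density:max-overgeneral}), which exist only for $n$ even and force $\overline{b_n} = u(0)^2$; this also accounts for the exponent $3\lceil n/2\rceil - 1$. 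Second, for $P^{\sqf}$ the totally ramified stratum $(x-\alpha)^n$ contributes \emph{zero} to the squarefree-discriminant count for $n \geq 3$ (its discriminant has valuation $\geq 2$), so ``this stratum depends on $b_n$'' does not by itself move the density; the dependence of $P^{\sqf}$ on $b_n$ arises from a M\"obius-type cancellation across \emph{all} non-squarefree strata whose individual deviations (e.g.\ the perfect-square strata, of deviation $\asymp p^{-n/2}$) are much larger than the net answer $\asymp p^{-(n-1)}$. Establishing the ``if and only if $\gcd(n,p-1)=1$'' claim therefore requires an exact computation ruling out cancellation among these contributions, not an existence argument for one stratum. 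Finally, you do not address the lift-counting subtlety that $a_n$ is fixed exactly (mod $p^2$, not just mod $p$): the analogues of Proposition~\ref{prop-density:sqf-1-lift-count} and Proposition~\ref{prop-density:max-reduce} must be run by varying a coefficient other than the constant one, which is why the paper's \S\ref{section:density-formula-unit} exists.
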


Now, we consider the set of polynomials in $V_n(\Z_p)$ whose $x^{n - 1}$-coefficient is fixed and whose constant coefficient is a unit.
We prove:

\begin{theorem}\label{thm-local:sub1-copconst}
Let $n \geq 2$ and $p$ be a prime number.
Fix some $b_1 \in \Z_p$, and denote
\[ V_n(\Z_p)_{a_1 = b_1, p \nmid a_n} = \{f(x) = x^n + a_1 x^{n - 1} + \ldots + a_n \in \Z_p[x] : a_1 = b_1, p \nmid a_n\}. \]
Then
\begin{align*}
    P^{\sqf}(V_n(\Z_p)_{a_1 = b_1, p \nmid a_n}) &= \begin{cases} 2/3 + O(1/2^n) & \text{if } p = 2, \\ 1 - \dfrac{3p - 1}{p(p + 1)^2} + O\left(\dfrac{n}{p^{n - 1}}\right) & \text{if } p \neq 2, \end{cases} \\
    P^{\max}(V_n(\Z_p)_{a_1 = b_1, p \nmid a_n}) &= 1 - \frac{1}{p^2 + p + 1} + O\left(\frac{1}{p^{n + \lfloor n/2 \rfloor - 1}}\right),
\end{align*}
    where the implied constants are absolute.
These densities are positive, with one exception: if $b_1 \in \Z_2$ is not a unit, then $P^{\sqf}(V_2(\Z_2)_{a_1 = b_1, 2 \nmid a_n}) = 0$.
Both densities depend strictly on whether $b_1$ is a unit in $\Z_p$ or not.
\end{theorem}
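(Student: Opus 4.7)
The plan is to combine the strategies used for Theorems \ref{thm-local:sub1} and \ref{thm-local:copconst}. I would stratify the set $V_n(\Z_p)_{a_1 = b_1, p \nmid a_n}$ by the factorization type of the reduction $\bar{f} \in \F_p[x]$ and apply Hensel's lemma to split the conditions ``$f$ has squarefree discriminant'' and ``$\Z_p[x]/(f(x))$ is maximal'' into independent local conditions at each irreducible factor of $\bar{f}$. The restriction $p \nmid a_n$ removes $x$ from the list of allowed irreducible factors of $\bar{f}$, while the restriction $a_1 = b_1$ fixes the residue $\bar{a}_1 \equiv \bar{b}_1 \pmod p$ in the mod-$p$ stratification and cuts out a codimension-one slice in the $p$-adic integration; these two constraints are compatible and can be imposed jointly.

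The main terms match those of Theorem \ref{thm-local:copconst}. For $P^{\sqf}$ with $p$ odd, the dominant contributions come from $\bar{f}$ squarefree with $\bar{f}(0) \neq 0$ and from $\bar{f}$ having exactly one square factor $(x - \alpha)^2$ with $\alpha \in \F_p^\times$ (times a squarefree coprime cofactor); summing the Hensel-lift corrections recovers $1 - \frac{3p-1}{p(p+1)^2}$. For $P^{\max}$, only the squarefree factorization types contribute to the main term, yielding $1 - 1/(p^2 + p + 1)$. The case $p = 2$ is handled by direct enumeration of low-degree factorization types, as the exact formula $2/3 + O(1/2^n)$ is a finite combinatorial computation rather than a density integral.

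The error terms arise from factorization types with a high-multiplicity irreducible factor. The bounds parallel those of Theorem \ref{thm-local:copconst}, with the fixed-coefficient constraint $a_1 = b_1$ slightly degrading the $P^{\max}$ exponent (from $\lceil n/2 \rceil$ to $\lfloor n/2 \rfloor - 1$) because one free coordinate is removed from the integration. The polynomial factor $n$ in the $P^{\sqf}$ bound appears when summing contributions over multiplicity patterns of the form $(x-\alpha)^k$ for $\alpha \in \F_p^\times$ and $2 \leq k \leq n$.

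Two further features warrant care, and I expect the second to be the main obstacle. The exceptional case $n = p = 2$ with $b_1 \in 2\Z_2$ is immediate: $\mathrm{disc}(x^2 + b_1 x + a_n) = b_1^2 - 4 a_n$ is divisible by $4$ whenever $b_1$ is even, so the discriminant is never squarefree, yielding $P^{\sqf} = 0$. More delicate is the strict dependence on whether $b_1 \in \Z_p^\times$, which here is claimed for every pair $(n, p)$, whereas in Theorem \ref{thm-local:sub1} it was only asserted when $p \mid n$. The reason is that excluding $x$ as a factor of $\bar f$ breaks the symmetry governing the distribution of $\bar a_1$: partial sums of nonzero roots in $\F_p$ are not uniformly distributed, so the conditional density given $\bar a_1 = \bar b_1$ depends genuinely on whether $\bar b_1 = 0$ or not. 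Verifying that this dependence survives in the leading term for all $(n, p)$ is where the main work lies.
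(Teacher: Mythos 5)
Your outline follows the classical Yamamura--Lenstra route (stratify by factorization type of $\overline{f}$, then lift via Hensel/Dedekind), which is genuinely different from the paper's method, but as written it has a real gap: the entire content of the theorem lives in the counting step that you defer. To run your stratification you must count, for each relevant factorization shape (squarefree; $(x+\alpha)^2$ times a coprime squarefree cofactor; and, for $P^{\max}$, all shapes $u^2v$), the number of monic $u \in \F_p[x]$ of degree $n$ with \emph{prescribed} $x^{n-1}$-coefficient $\overline{b_1}$ \emph{and} nonzero constant term. You correctly observe that dropping $x$ from the allowed irreducible factors skews the distribution of $\overline{a_1}$, and you say that verifying this ``is where the main work lies'' --- but that is precisely the work, and the proposal offers no mechanism for it. The paper's device for this is the unit-admissible triple of \S\ref{section:density-formula-unit} together with the computation $L_{\chi \circ \varphi_{1, x \nmid u}}(T) = 1 - T$ (Lemma~\ref{lem-sub1-copconst:char-gen}): the nontrivial degree-one coefficient $-1$ (coming from $\sum_{c \in \F_p^\times}\chi(c) = -1$ rather than $0$) is exactly what produces the correction terms $\eta_p(\overline{b_1})$ in Theorem~\ref{thm-sub1-copconst:main} and hence the claimed dependence on whether $b_1$ is a unit. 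Without some equivalent count (e.g.\ a character-sum or generating-function identity restricted to $x \nmid u$ and fixed subleading coefficient), your main terms, your error exponents $n + \lfloor n/2 \rfloor - 1$ and $n - 1$, and the unit/non-unit dependence are all unverified assertions rather than consequences of the argument.

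Two further points. First, the case $p = 2$ is not ``a finite combinatorial computation'': the statement $2/3 + O(1/2^n)$ must hold for every $n$, so you still need the general-$n$ count at $p=2$ (the paper gets exact formulas, e.g.\ $P_0^{\sqf}(V_n(\Z_2)_{a_1=b_1,2\nmid a_n}) = P_0^{\sqf}(V_n(\Z_2)_{2\nmid a_n}) - \tfrac{(-1)^n}{2^{n-1}}\eta_2(\overline{b_1})$, from the same machinery). Second, positivity is not addressed: the asymptotic formulas with absolute implied constants do not by themselves give positivity for small $n$, and the paper handles this by exhibiting explicit squarefree polynomials of each required shape and, in the exceptional case $(n,p)=(2,2)$ with $2 \mid b_1$, by checking that $(x+b_1/2)^2+1$ still gives a maximal order even though $P^{\sqf}=0$ there. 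Your observation that $b_1^2 - 4a_n \equiv 0 \pmod 4$ forces $P^{\sqf}(V_2(\Z_2)_{a_1=b_1,2\nmid a_n})=0$ for even $b_1$ is correct, but the positivity claims for $P^{\max}$ in that case and for both densities in general remain to be shown.
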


Next, we consider the following subsets of $V_n(\Z_p)$: (1). those whose constant and $x^{n - 1}$-coefficients are units; and (2). those whose constant and $x$-coefficients are units.
The densities of the two sets are easily seen to be equal by the bijection defined by dividing the polynomial with the constant coefficient and then reflecting the polynomial.
The density of the first set is equal to the average of the corresponding densities of $V_n(\Z_p)_{a_1 = b_1, p \nmid a_n}$ over all $b_1 \in \Z_p^{\times}$, which is independent of $b_1$ as seen above.
Thus, the following result follows immediately from Theorem~\ref{thm-local:sub1-copconst}.

\begin{theorem}\label{thm-local:copconstplus}
Let $n \geq 2$ and $p$ be a prime number.
Denote
\begin{align*}
    V_n(\Z_p)_{p \nmid a_1, p \nmid a_n} &= \{f(x) = x^n + a_1 x^{n - 1} + \ldots + a_n \in \Z_p[x] : p \nmid a_1, p \nmid a_n\}, \\
    V_n(\Z_p)_{p \nmid a_{n - 1}, p \nmid a_n} &= \{f(x) = x^n + a_1 x^{n - 1} + \ldots + a_n \in \Z_p[x] : p \nmid a_{n - 1}, p \nmid a_n\}.
\end{align*}
Define $V_n(\Z_p)_{a_1 = 1, p \nmid a_n}$ as in Theorem~\ref{thm-local:sub1-copconst}.
Then
\begin{align*}
    P^{\sqf}(V_n(\Z_p)_{p \nmid a_1, p \nmid a_n}) &= P^{\sqf}(V_n(\Z_p)_{a_1 = 1, p \nmid a_n}), & P^{\sqf}(V_n(\Z_p)_{p \nmid a_{n - 1}, p \nmid a_n}) &= P^{\sqf}(V_n(\Z_p)_{a_1 = 1, p \nmid a_n}), \\
    P^{\max}(V_n(\Z_p)_{p \nmid a_1, p \nmid a_n}) &= P^{\max}(V_n(\Z_p)_{a_1 = 1, p \nmid a_n}), & P^{\max}(V_n(\Z_p)_{p \nmid a_{n - 1}, p \nmid a_n}) &= P^{\max}(V_n(\Z_p)_{a_1 = 1, p \nmid a_n}),
\end{align*}
    and these densities are positive.
\end{theorem}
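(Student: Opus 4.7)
The plan is to deduce both halves of the theorem from Theorem~\ref{thm-local:sub1-copconst} by the two arguments hinted at in the paragraph preceding the statement: a fiber integration for the first set, and an explicit reciprocal-polynomial bijection for the second.

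For $V_n(\Z_p)_{p \nmid a_1, p \nmid a_n}$, I would slice along the $a_1$-coordinate. The normalized Haar measure on this set disintegrates as the uniform average over $b_1 \in \Z_p^{\times}$ of the measures on the fibers $V_n(\Z_p)_{a_1 = b_1, p \nmid a_n}$, so that for $\bullet \in \{\sqf, \max\}$,
\[ P^{\bullet}(V_n(\Z_p)_{p \nmid a_1, p \nmid a_n}) = \frac{1}{\mu(\Z_p^{\times})} \int_{\Z_p^{\times}} P^{\bullet}(V_n(\Z_p)_{a_1 = b_1, p \nmid a_n}) \, db_1. \]
By the final sentence of Theorem~\ref{thm-local:sub1-copconst}, the integrand depends only on whether $b_1$ is a unit, so on the domain of integration it is identically equal to $P^{\bullet}(V_n(\Z_p)_{a_1 = 1, p \nmid a_n})$. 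This gives the first identity in each row.

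For $V_n(\Z_p)_{p \nmid a_{n - 1}, p \nmid a_n}$, I would introduce the reciprocal involution
\[ \Phi \colon V_n(\Z_p)_{p \nmid a_1, p \nmid a_n} \longrightarrow V_n(\Z_p)_{p \nmid a_{n - 1}, p \nmid a_n}, \qquad f(x) \longmapsto a_n^{-1} x^n f(1/x). \]
Expanding, the coefficients of $g = \Phi(f)$ are $g_i = a_{n - i}/a_n$ for $1 \leq i \leq n - 1$ and $g_n = 1/a_n$; in particular $\Phi$ swaps the unit conditions on $a_1$ and $a_{n - 1}$ while preserving the one on $a_n$, and $\Phi \circ \Phi = \mathrm{id}$. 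The Jacobian of this coordinate change is $\pm a_n^{-(n + 1)}$, a $p$-adic unit on the relevant domain, so $\Phi$ is Haar-measure-preserving. Next I would check that each of the two invariants transforms cleanly: if $f = \prod_i (x - \alpha_i)$, then $g$ has roots $1/\alpha_i$ and a short computation shows $\operatorname{disc}(g) = \operatorname{disc}(f) \cdot a_n^{-2(n - 1)}$, a unit multiple of $\operatorname{disc}(f)$; and the substitution $x \mapsto x^{-1}$ induces a $\Q_p$-algebra isomorphism $\Q_p[x]/(f(x)) \cong \Q_p[x]/(g(x))$ that restricts to an isomorphism of the $\Z_p$-orders $\Z_p[x]/(f(x)) \cong \Z_p[x]/(g(x))$ (since $a_n \in \Z_p^{\times}$ makes $x$ a unit in each), and thus preserves maximality. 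Combining these with the measure preservation gives the second identity in each row.

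Positivity of all four densities is inherited from the $b_1 = 1$ case of Theorem~\ref{thm-local:sub1-copconst}, since the exceptional failure of positivity there occurs only for $b_1 \notin \Z_2^{\times}$. The only step of any real substance is the verification that $\Phi$ preserves the two invariants; everything else is routine bookkeeping.
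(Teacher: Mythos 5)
Your proposal is correct and follows essentially the same route as the paper: averaging over $b_1 \in \Z_p^{\times}$ using the $b_1$-independence (for units) from Theorem~\ref{thm-local:sub1-copconst}, and then transferring to $V_n(\Z_p)_{p \nmid a_{n-1}, p \nmid a_n}$ via the bijection $f(x) \mapsto a_n^{-1} x^n f(1/x)$. Your verifications of the Jacobian, the discriminant identity $\operatorname{disc}(g) = a_n^{-2(n-1)} \operatorname{disc}(f)$, and the isomorphism of orders $\Z_p[x]/(f) \cong \Z_p[x]/(g)$ are exactly the details the paper leaves implicit, and they check out.
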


In~\cite{ISW2025}, Iverson, Sanjaya, and Wang computed the density of monic integer polynomials $f$ of degree $n$ with prime non-leading coefficients such that $f$ has squarefree discriminant.
They also computed the density of monic integer polynomials $f$ of degree $n$ with prime non-leading coefficients such that $\Z[x]/(f(x))$ is the maximal order of $\Q[x]/(f(x))$.
Here, we consider a variant of the problem, where we set only a few non-leading coefficients, as opposed to all of them, to be prime.

For any $\cI \subseteq \{1, 2, \ldots, n\}$ and real number $X > 0$, denote
\begin{align*}
    V_{n; \cI}(X) &= \{f(x) = x^n + a_1 x^{n - 1} + \ldots + a_n \in \Z[x] : |a_i| < X^i \text{ for all } i \leq n \text{ and } a_i \text{ is prime for all } i \in \cI\}, \\
    N_{n; \cI}^{\sqf}(X) &= \#\{f \in V_{n; \cI}(X) : f \text{ has squarefree discriminant}\}, \\
    N_{n; \cI}^{\max}(X) &= \#\{f \in V_{n; \cI}(X) : \Z[x]/(f(x)) \text{ is the maximal order of } \Q[x]/(f(x))\}.
\end{align*}
First, we consider the case where only the constant coefficient is restricted to be prime.

\begin{theorem}\label{thm-global:const}
For any $n \geq 2$ and for any real number $A > 0$, we have
\begin{align*}
    N_{n; \{n\}}^{\sqf}(X) &= C_{n; \{n\}}^{\sqf} \Li(X^n) \prod_{i = 1}^{n - 1} (2X^i) + O_A\left(\frac{X^{n(n + 1)/2}}{(\log X)^A}\right), \\
    N_{n; \{n\}}^{\max}(X) &= C_{n; \{n\}}^{\max} \Li(X^n) \prod_{i = 1}^{n - 1} (2X^i) + O_A\left(\frac{X^{n(n + 1)/2}}{(\log X)^A}\right),
\end{align*}
    with \[ C_{n; \{n\}}^{\sqf} = \prod_p P^{\sqf}(V_n(\Z_p)_{p \nmid a_n}) > 0, \qquad
        C_{n; \{n\}}^{\max} = \prod_p P^{\max}(V_n(\Z_p)_{p \nmid a_n}) > 0, \]
    where $P^{\sqf}(V_n(\Z_p)_{p \nmid a_n})$ and $P^{\max}(V_n(\Z_p)_{p \nmid a_n})$ are as defined in Theorem~\ref{thm-local:copconst}.
\end{theorem}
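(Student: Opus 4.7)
The approach is a standard local--global squarefree (resp.\ maximality) sieve, combined with an equidistribution theorem for primes in arithmetic progressions. Write $P_p^{\sqf} := P^{\sqf}(V_n(\Z_p)_{p\nmid a_n})$, and similarly $P_p^{\max}$. By Theorem~\ref{thm-local:copconst} we have $P_p^{\sqf},\, P_p^{\max} = 1 - O(1/p^2)$, so both infinite products converge and are positive.

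Fix a truncation parameter $M = M(X)$ tending to infinity like a small power of $\log X$. For primes $p \leq M$, the condition that $\operatorname{disc}(f)$ is not divisible by $p^2$ depends only on $(a_1,\ldots,a_n) \bmod p^{2n}$, so by CRT it factors across $p \leq M$ into a product of local conditions. Summing over primes $a_n = q$ with $|q| < X^n$ lying in the appropriate residue classes modulo $\prod_{p \leq M} p^{2n}$, and invoking the Siegel--Walfisz theorem (whose error is uniform over moduli that are at most a fixed power of $\log X$), one obtains
\[
\#\{f \in V_{n;\{n\}}(X) : p^2 \nmid \operatorname{disc}(f) \text{ for all } p \leq M\} = \left(\prod_{p \leq M} P_p^{\sqf}\right)\!\Li(X^n)\!\prod_{i=1}^{n-1}\!(2X^i) \,+\, O_A\!\left(\frac{X^{n(n+1)/2}}{(\log X)^A}\right).
\]
Here the fact that $a_n$ is prime automatically ensures $p \nmid a_n$ for all but the one prime $q = p \leq M$, whose contribution is trivially negligible. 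Letting $M \to \infty$ replaces the truncated product with $C_{n;\{n\}}^{\sqf}$.

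The main obstacle is the large-prime tail estimate,
\[
\#\{f \in V_{n;\{n\}}(X) : p^2 \mid \operatorname{disc}(f) \text{ for some } p > M\} = O_A\!\left(X^{n(n+1)/2}/(\log X)^A\right),
\]
which controls the error in exchanging $\prod_{p \leq M}$ for $\prod_p$. I would follow the three-range strategy of Bhargava--Shankar--Wang~\cite{BSW2022}: (i) for $M < p \leq X^{1/2-\epsilon}$, sieve directly on the residue classes of $a_n$ modulo $p^2$, applying Bombieri--Vinogradov to obtain a saving on average over $p$; (ii) for moderate primes, use that $p^2 \mid \operatorname{disc}(f)$ forces the reduction of $f$ modulo $p$ to have a repeated root, and combine the resulting codimension-one constraint with a lattice point count among prime values of $a_n$; (iii) for the largest primes, where $p$ exceeds a threshold at which the possible large-$p^2$-divisible values of $\operatorname{disc}(f)$ become sparse, apply the quantitative count of Bhargava--Shankar--Tsimerman--Wang together with Brun--Titchmarsh to cap the number of prime $a_n$ in the relevant residue classes. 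This is the technical heart of the argument; the prime restriction on $a_n$ is an additional wrinkle handled by replacing lattice point counts with prime counts throughout.

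The proof of the $N_{n;\{n\}}^{\max}$ statement is structurally identical: replace the event $\{p^2 \mid \operatorname{disc}(f)\}$ by $\{\Z_p[x]/f \text{ is not maximal}\}$, and observe that the latter is a subset of the former, so the same tail bound applies verbatim. Positivity of $C_{n;\{n\}}^{\sqf}$ and $C_{n;\{n\}}^{\max}$ follows from Theorem~\ref{thm-local:copconst} and the convergence of $\prod_p (1 - O(1/p^2))$.
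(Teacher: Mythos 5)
Your overall route is the same as the paper's: the paper obtains this theorem by generalizing the counting theorem of Iverson--Sanjaya--Wang (Siegel--Walfisz applied to the coefficients constrained to be prime, naive counts on the rest, after factoring the mod-$p^2$ conditions for small $p$ by CRT) and then invoking the uniformity estimate of Bhargava--Shankar--Wang to control the primes $p > M$; your three-range tail discussion and the observation that non-maximality at $p$ implies $p^2 \mid \Delta(f)$ are exactly the ingredients being cited. Two points in your write-up do not work as literally stated, though both are repairable.

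First, you propose to apply Siegel--Walfisz to the single modulus $\prod_{p \le M} p^{2n}$ with $M \to \infty$ like a power of $\log X$. That modulus has size roughly $e^{2nM}$, far beyond the $(\log X)^{O(1)}$ range in which Siegel--Walfisz is uniform. The correct organization (and the one implicit in the result you are generalizing) is to run inclusion--exclusion over squarefree integers $m$ supported on primes $\le M$, truncated at $m \le (\log X)^{C}$, so that each individual modulus $m^{2}$ lies in the Siegel--Walfisz range; the discarded terms are absorbed into the tail estimate. As phrased, your small-prime step would fail.

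Second, your positivity argument asserts $P^{\sqf}(V_n(\Z_p)_{p \nmid a_n}) = 1 - O(1/p^2)$ ``by Theorem~\ref{thm-local:copconst}.'' For $n = 2$ the error term there is $O(n/p^{n-1}) = O(1/p)$, which is not enough to conclude convergence of the product to a positive limit. The paper handles this by a direct computation giving $P^{\sqf}(V_2(\Z_p)_{p \nmid a_2}) = 1 - 1/p^2$ for odd $p$ (equivalently, one can read it off from the exact formulas of Theorem~\ref{thm-copconst:main}); you need the same supplement for $n = 2$. The maximality case is unaffected, since there the error term is $O(1/p^{n + \lceil n/2 \rceil})$.
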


Second, we consider the case where the constant and $x^{n - 1}$-coefficients are restricted to be prime.

\begin{theorem}\label{thm-global:sub1-const}
For any $n \geq 2$ and for any real number $A > 0$, we have
\begin{align*}
    N^{\sqf}_{n; \{1, n\}}(X) &= C_{n; \{1, n\}}^{\sqf} \Li(X) \Li(X^n) \prod_{i = 2}^{n - 1} (2X^i) + O_A\left(\frac{X^{n(n + 1)/2}}{(\log X)^A}\right), \\
    N^{\max}_{n; \{1, n\}}(X) &= C_{n; \{1, n\}}^{\max} \Li(X) \Li(X^n) \prod_{i = 2}^{n - 1} (2X^i) + O_A\left(\frac{X^{n(n + 1)/2}}{(\log X)^A}\right),
\end{align*}
    with \[ C_{n; \{1, n\}}^{\sqf} = \prod_p P^{\sqf}(V_n(\Z_p)_{p \nmid a_1, p \nmid a_n}) > 0, \qquad
        C_{n; \{1, n\}}^{\max} = \prod_p P^{\max}(V_n(\Z_p)_{p \nmid a_1, p \nmid a_n}) > 0, \]
    where $P^{\sqf}(V_n(\Z_p)_{p \nmid a_1, p \nmid a_n})$ and $P^{\max}(V_n(\Z_p)_{p \nmid a_1, p \nmid a_n})$ are as defined in Theorem~\ref{thm-local:copconstplus}.
\end{theorem}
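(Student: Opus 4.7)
The plan is to combine the squarefree-discriminant (and maximality) sieve of Bhargava--Shankar--Wang~\cite{BSW2022} with a Siegel--Walfisz type prime number theorem in arithmetic progressions in order to enforce the primality of $a_1$ and $a_n$ simultaneously. The argument directly parallels the proof of Theorem~\ref{thm-global:const}, the only new feature being that primes in residue classes modulo $d^2$ must now be counted in two coordinates rather than one.

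First, I would expand
\[
    N^{\sqf}_{n;\{1,n\}}(X) = \sum_{d \geq 1} \mu(d)\, N_d(X),
\]
where $N_d(X)$ counts $f \in V_{n;\{1,n\}}(X)$ with $d^2 \mid \mathrm{disc}(f)$, and split the sum at a threshold $M = (\log X)^{B}$ with $B = B(A)$ chosen sufficiently large. For $d \leq M$, the condition $d^2 \mid \mathrm{disc}(f)$ cuts out a union of congruence classes modulo $d^2$ on the coefficients of $f$. On each such class, the count of pairs $(a_1, a_n)$ of primes in the specified residue classes mod $d^2$ equals $\Li(X)\Li(X^n)/\phi(d^2)^2 + O_A(X^{n+1}/(\log X)^A)$, uniformly in $d \leq M$, by Siegel--Walfisz applied to each coordinate independently. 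Multiplying by the contribution of the remaining coefficients $a_2, \ldots, a_{n-1}$ and summing over congruence classes assembles into a main term of the form $P_d \cdot \Li(X)\Li(X^n) \prod_{i=2}^{n-1}(2X^i)$, where $P_d$ is the density contribution at primes dividing $d$; summing over $d \leq M$ and invoking the positivity and rapid convergence of the Euler product guaranteed by Theorem~\ref{thm-local:copconstplus}, this produces the main term $C^{\sqf}_{n;\{1,n\}} \Li(X)\Li(X^n) \prod_{i=2}^{n-1}(2X^i)$.

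For the tail $d > M$, I would import the uniform upper bounds from~\cite{BSW2022} on the number of $f \in V_n(\Z)$ of bounded height with $d^2 \mid \mathrm{disc}(f)$; the additional primality restrictions on $a_1$ and $a_n$ can only reduce the count, so these tail bounds transfer directly and yield a contribution absorbed into $O_A(X^{n(n+1)/2}/(\log X)^A)$. The maximality count $N^{\max}_{n;\{1,n\}}(X)$ follows from the identical argument with $P^{\sqf}$ replaced by $P^{\max}$, since maximality at $p$ is also a congruence condition modulo a power of $p$ with analogous uniform tail estimates. Positivity of the Euler products $C^{\sqf}_{n;\{1,n\}}$ and $C^{\max}_{n;\{1,n\}}$ is immediate from Theorem~\ref{thm-local:copconstplus}.

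The main obstacle is ensuring that the Siegel--Walfisz error remains truly uniform across all $d \leq (\log X)^B$ while preserving a savings of an arbitrary power of $\log X$. However, since $d^2 \leq (\log X)^{2B}$ lies far below the Siegel--Walfisz threshold $\exp(c\sqrt{\log X})$, the uniform bound $\pi(Y; q, a) = \Li(Y)/\phi(q) + O_{A'}(Y/(\log Y)^{A'})$ applies, so the remaining task is the careful bookkeeping of choosing $B$ large enough (in terms of $A$) so that the errors from both the small-$d$ and large-$d$ regimes combine to $O_A(X^{n(n+1)/2}/(\log X)^A)$.
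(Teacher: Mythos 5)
Your proposal follows essentially the same route as the paper: the paper obtains this theorem by generalizing Theorem 1.3 of \cite{ISW2025} to tuples that are required to be prime only at a fixed set of indices (applying Siegel--Walfisz only to those coordinates and naive estimates to the rest, here $a_1$ and $a_n$), combined with the uniformity (tail) estimate of \cite{BSW2022} for $d$ beyond the threshold. Your small-range/large-range decomposition, the use of Siegel--Walfisz in each prime-restricted coordinate, and the transfer of the \cite{BSW2022} tail bounds are exactly the intended argument.

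One point is too quick: the positivity $C_{n;\{1,n\}}^{\sqf},\, C_{n;\{1,n\}}^{\max}>0$ is \emph{not} immediate from Theorem~\ref{thm-local:copconstplus} alone. That theorem gives positivity of each local factor, but positivity of the infinite product also requires the factors to tend to $1$ fast enough that $\sum_p\bigl(1-P(V_n(\Z_p)_{p\nmid a_1,\,p\nmid a_n})\bigr)$ converges. The asymptotic expansions in Theorems~\ref{thm-local:copconst} and~\ref{thm-local:sub1-copconst} have error terms of the form $O(n/p^{n-1})$, which give the needed $1-O(1/p^2)$ decay only for $n\geq 3$; for $n=2$ the error is only $O(1/p)$ and the paper instead verifies convergence by the direct computation $P^{\sqf}(V_2(\Z_p)_{a_1=1,\,p\nmid a_2})=1-1/(p^2-p)$ for odd $p$ (and similarly for the maximality density). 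You should add this $n=2$ check to complete the positivity claim.
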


Third, we consider the case where the constant and $x$-coefficients are restricted to be prime.

\begin{theorem}\label{thm-global:x-const}
For any $n \geq 2$ and for any real number $A > 0$, we have
\begin{align*}
    N^{\sqf}_{n; \{n - 1, n\}}(X) &= C_{n; \{n - 1, n\}}^{\sqf} \Li(X^{n - 1}) \Li(X^n) \prod_{i = 1}^{n - 2} (2X^i) + O_A\left(\frac{X^{n(n + 1)/2}}{(\log X)^A}\right), \\
    N^{\max}_{n; \{n - 1, n\}}(X) &= C_{n; \{n - 1, n\}}^{\max} \Li(X^{n - 1}) \Li(X^n) \prod_{i = 1}^{n - 2} (2X^i) + O_A\left(\frac{X^{n(n + 1)/2}}{(\log X)^A}\right),
\end{align*}
    with \[ C_{n; \{n - 1, n\}}^{\sqf} = \prod_p P^{\sqf}(V_n(\Z_p)_{p \nmid a_{n - 1}, p \nmid a_n}) > 0, \qquad
        C_{n; \{n - 1, n\}}^{\max} = \prod_p P^{\max}(V_n(\Z_p)_{p \nmid a_{n - 1}, p \nmid a_n}) > 0, \]
    where $P^{\sqf}(V_n(\Z_p)_{p \nmid a_{n - 1}, p \nmid a_n})$ and $P^{\max}(V_n(\Z_p)_{p \nmid a_{n - 1}, p \nmid a_n})$ are as defined in Theorem~\ref{thm-local:copconstplus}.
\end{theorem}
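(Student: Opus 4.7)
The plan is to adapt the sieve-theoretic framework of \cite{ISW2025}, proceeding in the same manner as the proof of Theorem~\ref{thm-global:sub1-const} with the role of $a_1$ replaced throughout by $a_{n-1}$. The goal is to reduce the global count to a Siegel--Walfisz input for the two prime coordinates $a_{n-1}$ and $a_n$, combined with the local densities $P^{\sqf}(V_n(\Z_p)_{p \nmid a_{n-1}, p \nmid a_n})$ and $P^{\max}(V_n(\Z_p)_{p \nmid a_{n-1}, p \nmid a_n})$ of Theorem~\ref{thm-local:copconstplus}, while controlling the contribution of large primes.

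First I would express the indicator of squarefree discriminant (respectively, maximality of $\Z[x]/(f(x))$) as a product of local conditions, one per prime, and truncate the product to primes $p \le Y$ with $Y = (\log X)^{C(A)}$ for a sufficiently slow-growing $C(A)$. For each admissible residue tuple modulo $M = \prod_{p \le Y} p^2$, the count of $f \in V_{n; \{n-1, n\}}(X)$ in that class factors: each coefficient $a_1, a_2, \dots, a_{n-2}$ contributes its box volume divided by $M$ up to an admissible boundary error, while Siegel--Walfisz supplies the asymptotics $\Li(X^{n-1})/\varphi(M)$ and $\Li(X^n)/\varphi(M)$ for the numbers of primes $a_{n-1}$ and $a_n$ in their prescribed arithmetic progressions, with total error $O_A(X^{n(n+1)/2}/(\log X)^A)$. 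Summing over admissible residues at each $p \le Y$ recovers the local density factor, so that upon letting $Y \to \infty$ the truncated main term converges to the expected Euler product times $\Li(X^{n-1}) \Li(X^n) \prod_{i=1}^{n-2} (2X^i)$, in both the $\sqf$ and the $\max$ setting.

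The main obstacle is bounding the tail contribution from primes $p > Y$, and this is where the primality constraints on $a_{n-1}$ and $a_n$ do real work. For $Y < p \le X^{1/2}$ I would apply an Ekedahl-type estimate giving local densities $\ll 1/p^2$ for the set of $f$ whose discriminant (or the index of $\Z[x]/(f(x))$ in its maximal order) is divisible by $p^2$; then summing $\sum_{p > Y} 1/p^2 \ll 1/Y$ suffices, provided the primality constraints on $a_{n-1}$ and $a_n$ are absorbed by a Brun--Titchmarsh majorant in the relevant arithmetic progressions. For $p > X^{1/2}$ the geometric argument of Bhargava--Shankar--Wang \cite{BSW2022} applies: $p^2$ dividing the discriminant forces $\bar f \bmod p$ to have a restrictive factorization type, and the polynomials of each type can be parametrized by lower-dimensional data so that their total count is $o(X^{n(n+1)/2}/(\log X)^A)$ even after restricting $a_{n-1}$ and $a_n$ to primes. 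Both uniformity estimates were established in the strictly more restrictive setting of \cite{ISW2025}, where all non-leading coefficients were taken to be prime, and so they transfer to our case of only two prime coefficients essentially directly. Positivity of $C^{\sqf}_{n; \{n-1, n\}}$ and $C^{\max}_{n; \{n-1, n\}}$ then follows from positivity of each local factor as asserted in Theorem~\ref{thm-local:copconstplus}.
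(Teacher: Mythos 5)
Your counting strategy matches the paper's (sketched) proof: the paper likewise generalizes \cite[Theorem 1.3]{ISW2025} to tuples that are prime only at a prescribed set of indices, by applying Siegel--Walfisz only to the prime-restricted coordinates and naive box estimates to the rest in the small range, and then invokes the uniformity estimate of \cite{BSW2022} for the large primes. So the analytic core of your argument is fine and is essentially the intended one.

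There is, however, a genuine gap in your last sentence. Positivity of each local factor does \emph{not} imply positivity of the Euler product $\prod_p P^{\bullet}(V_n(\Z_p)_{p \nmid a_{n-1}, p \nmid a_n})$: an infinite product of numbers in $(0,1)$ can perfectly well converge to $0$. What you need is that the local factors are $1 + O(1/p^{1+\epsilon})$ uniformly in $p$, so that the product converges to a nonzero limit. Theorems~\ref{thm-local:copconst} and~\ref{thm-local:sub1-copconst} (hence Theorem~\ref{thm-local:copconstplus}) give error terms of the shape $O(n/p^{n-1})$ and $O(1/p^{n+\lfloor n/2\rfloor - 1})$, which are $\ll_n 1/p^2$ only when $n \geq 3$. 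For $n = 2$ the $O(n/p^{n-1}) = O(1/p)$ bound is not summable, and the paper handles this case by a separate direct computation, namely $P^{\sqf}(V_2(\Z_p)_{a_1 = 1, p \nmid a_2}) = 1 - 1/(p^2 - p)$ for odd $p$, whose product over $p$ visibly converges to a positive number. You should either restrict your positivity argument to $n \geq 3$ and supply the $n = 2$ computation separately, or otherwise justify the $1 + O(1/p^2)$ decay of the local factors.
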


We now sketch the proofs of our results.
We cannot apply~\cite[Theorem 1.3]{ISW2025} directly to obtain Theorem~\ref{thm-global:const}-\ref{thm-global:x-const}, because it only counts $n$-tuples of integers in a set with all entries being prime.
However,~\cite[Theorem 1.3]{ISW2025} can be generalized to count $n$-tuples of integers with entries restricted to be prime only at a fixed set of indices.
The proof follows by modifying the counting argument in the small range, where we apply the Siegel-Walfisz theorem only on the entries restricted to be prime and use naive estimate on the other entries.
This generalization of~\cite[Theorem 1.3]{ISW2025}, together with the uniformity estimate proved in~\cite{BSW2022}, implies Theorem~\ref{thm-global:const}, Theorem~\ref{thm-global:sub1-const}, and Theorem~\ref{thm-global:x-const}.
In addition, if the uniformity estimate for integer polynomials with fixed $x^{n - 1}$-coefficient is obtained, then we would also get the global density of integer polynomials with fixed $x^{n - 1}$-coefficient and prime constant coefficient.
Note that Theorem~\ref{thm-local:copconst} and Theorem~\ref{thm-local:sub1-copconst} only imply that the global densities in Theorem~\ref{thm-global:const}-\ref{thm-global:x-const} are positive when $n \geq 3$, but the case $n = 2$ follows by direct calculation; if $p$ is odd, then $P^{\sqf}(V_2(\Z_p)_{p \nmid a_2}) = 1 - 1/p^2$ and $P^{\sqf}(V_2(\Z_p)_{a_1 = 1, p \nmid a_2}) = 1 - 1/(p^2 - p)$.

It remains to do the local density calculations and prove Theorem~\ref{thm-local:sub1}-\ref{thm-local:sub1-copconst}.
Let $V_n(\Z_p)$ denote the set of degree $n$ monic polynomials over $\Z_p$, and let $\Sigma$ be a subset of $V_n(\Z_p)$ defined by congruence conditions modulo $p^2$.
The densities $P^{\sqf}(\Sigma)$ and $P^{\max}(\Sigma)$ can be written in terms of the quantity
\[ \lambda_p(\{f \in \Sigma : \overline{f} = u\}) \]
    across all $u \in \F_p[x]_m$ of degree $n$, where $\F_p[x]_m$ denotes the set of monic polynomials over $\F_p$ and $\lambda_p$ is the $p$-adic Haar measure on $V_n(\Z_p)$.
Our key idea to compute $P^{\sqf}(\Sigma)$ and $P^{\max}(\Sigma)$ is by decomposing $\lambda_p(\{f \in \Sigma : \overline{f} = u\})$ as a linear combination of completely multiplicative functions.
We write $\lambda_p(\{f \in \Sigma : \overline{f} = u\})$ as $w(\psi(u))$, where $\psi : \F_p[x]_m \to G$ is a monoid homomorphism to some finite abelian group $G$ and $w : G \to \C$ is a weight function.
Using Fourier transform, we express the weight function $w$ as a linear combination of the characters of $G$.
Composing these characters with $\psi$ gives completely multiplicative functions, thus giving the desired decomposition of $\lambda_p(\{f \in \Sigma : \overline{f} = u\})$.

If the constant coefficient of the polynomials in $\Sigma$ are units, then $\lambda_p(\{f \in \Sigma : \overline{f} = u\}) = 0$ whenever $x \mid u$.
Thus, we would need to change the domain of $\psi$ from $\F_p[x]_m$ to $\F_p[x]_{m, x \nmid u}$, the set of monic polynomials over $\F_p$ not divisible by $x$.
This causes the main difference between the densities of $V_n(\Z_p)_{p \nmid a_n}$ as seen in Theorem~\ref{thm-local:copconst} and the densities of $V_n(\Z_p)$ in the $n$-limit.
The rest of the argument work as before.

We note that our method bears a similarity to the method of~\cite{ABZ2007} for the case $\Sigma = V_n(\Z_p)$, in which we utilize the theory of generating series of arithmetic functions over function fields, with different executions.

In \S\ref{section:density-formula}, we prove formulas for the squarefree discriminant density and maximality density of a subset of $V_n(\Z_p)$ using Fourier analysis over finite abelian groups.
In \S\ref{section:sub1} and \S\ref{section:sub2}, we apply the formulas proved in \S\ref{section:density-formula} to prove Theorem~\ref{thm-local:sub1} and Theorem~\ref{thm-local:sub2}, respectively.
In \S\ref{section:density-formula-unit}, we modify the formulas proved in \S\ref{section:density-formula} to work over the subsets of $V_n(\Z_p)$ containing only polynomials whose constant coefficient is a unit.
In \S\ref{section:copconst}, \S\ref{section:fixconst}, and \S\ref{section:sub1-copconst}, we apply these modified formulas to prove Theorem~\ref{thm-local:copconst}, Theorem~\ref{thm-local:fixconst}, and Theorem~\ref{thm-local:sub1-copconst}, respectively.

\subsection{Common notations}

Before we proceed, we set up some common notations to be used throughout the paper.

For any finite abelian group $G$, the \emph{dual group} $\widehat{G}$ of $G$ is the set of group homomorphisms from $G$ to the complex unit circle $\{z \in \C : |z| = 1\}$.
The elements of $\widehat{G}$ are called the \emph{characters} of $G$.
For any function $w : G \to \C$, the \emph{Fourier transform} of $w$ is the function $\hat{w} : \widehat{G} \to \C$ defined by the formula
\[ \hat{w}(\chi) = \frac{1}{\# G} \sum_{\gamma \in G} w(\gamma) \chi(\gamma)^{-1}. \]

We denote by $\F_p[x]_m$ the set of monic polynomials over $\F_p$.
For any function $\rho : S \to \C$ on a subset $S \subseteq \F_p[x]_m$, the \emph{generating series} associated to $\rho$ is the power series
\[ L_{\rho}(T) = \sum_{u \in S} \rho(u) \, T^{\deg(u)}. \]
In \S\ref{section:density-formula}-\ref{section:sub2}, we use the above notation with $S = \F_p[x]_m$, while in \S\ref{section:density-formula-unit}-\ref{section:sub1-copconst}, we use the above notation with $S = \F_p[x]_{m, x \nmid u}$, the set of $u \in \F_p[x]_m$ such that $x \nmid u$.
Finally, for any power series $L(T)$ and an integer $k \geq 0$, we denote by $[L(T)]_k$ the $T^k$-coefficient of $L(T)$.

\section{General density formula}\label{section:density-formula}

Fix an integer $n \geq 2$ and a prime number $p$.
Recall that $V_n(\Z_p)$ is the set of degree $n$ monic polynomials over $\Z_p$ and $\lambda_p$ is the $p$-adic Haar measure on $V_n(\Z_p)$.
Let $\Sigma$ be a subset of $V_n(\Z_p)$ defined by congruence conditions mod $p^2$.
Let $P_0^{\sqf}(\Sigma)$ and $P_1^{\sqf}(\Sigma)$ denote the density of polynomials in $\Sigma$ whose discriminant has $p$-adic valuation $0$ and $1$, respectively.
Let $P^{\max}(\Sigma)$ denote the density of polynomials $f \in \Sigma$ such that $\Z_p[x]/(f(x))$ is the maximal order of $\Q_p[x]/(f(x))$.
In this section, we prove general formulas for $P_0^{\sqf}(\Sigma)$, $P_1^{\sqf}(\Sigma)$, and $P^{\max}(\Sigma)$.
We first set up some technical definitions.

For any non-negative integer $k < n$, we say that the $x^k$-coefficient of (polynomials in) $\Sigma$ is \emph{defined by congruence conditions mod $p$} if for any $f \in \Sigma$ and $c \in \Z_p$ such that $p \mid c$, we have $f + cx^k \in \Sigma$.
Next, we define \emph{$\Sigma$-admissible triples}, which encode the distribution of $\lambda_p(\{f \in \Sigma : \overline{f} = u\})$ across all $u \in \F_p[x]_m$ of degree $n$.

\begin{definition}\label{def:admissible}
A \emph{$\Sigma$-admissible triple} is a triple $(G, \psi, w)$, where $G$ is a finite abelian group, $\psi : \F_p[x]_m \to G$ is a monoid homomorphism, and $w : G \to \C$ is a function such that for any $u \in \F_p[x]_m$ of degree $n$,
\[ w(\psi(u)) = \frac{\lambda_p(\{f \in \Sigma : \overline{f} = u\})}{\lambda_p(\Sigma)}. \]
\end{definition}

The general formulas, which we wish to prove in this section, are as follows.

\begin{theorem}\label{thm-density:sqf-0}
Let $(G, \psi, w)$ be a $\Sigma$-admissible triple.
Then \[ P_0^{\sqf}(\Sigma) = \sum_{\chi \in \widehat{G}} \hat{w}(\chi) \left[\frac{L_{\chi \circ \psi}(T)}{L_{\chi^2 \circ \psi}(T^2)}\right]_n. \]
\end{theorem}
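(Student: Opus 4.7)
The plan is to reduce $P_0^{\sqf}(\Sigma)$ to a character sum over squarefree monic polynomials in $\F_p[x]_m$ of degree $n$, then apply Fourier inversion on $G$ to the weight function $w$, and finally evaluate each resulting completely multiplicative character sum via an Euler product identity.

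First, I would observe that for $f \in V_n(\Z_p)$, the discriminant $\operatorname{disc}(f)$ has $p$-adic valuation $0$ if and only if $\overline{f} \in \F_p[x]$ is squarefree, since discriminant commutes with reduction mod $p$. Partitioning $\Sigma$ by the reduction $\overline{f}$ and using the definition of a $\Sigma$-admissible triple gives
\[ P_0^{\sqf}(\Sigma) = \frac{1}{\lambda_p(\Sigma)} \sum_{\substack{u \in \F_p[x]_m \\ \deg u = n, \ u \text{ sqf}}} \lambda_p(\{f \in \Sigma : \overline{f} = u\}) = \sum_{\substack{u \in \F_p[x]_m \\ \deg u = n, \ u \text{ sqf}}} w(\psi(u)). \]
Next, I would apply Fourier inversion $w(\gamma) = \sum_{\chi \in \widehat{G}} \hat w(\chi) \chi(\gamma)$ and swap the order of summation to obtain
\[ P_0^{\sqf}(\Sigma) = \sum_{\chi \in \widehat{G}} \hat w(\chi) \sum_{\substack{u \in \F_p[x]_m \\ \deg u = n, \ u \text{ sqf}}} (\chi \circ \psi)(u). \]
Since $\psi$ is a monoid homomorphism and $\chi$ a group homomorphism, the composition $\rho := \chi \circ \psi$ is completely multiplicative on $\F_p[x]_m$, with $\rho^2 = \chi^2 \circ \psi$.

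The remaining task is to identify the inner sum with the coefficient $[L_{\chi \circ \psi}(T)/L_{\chi^2 \circ \psi}(T^2)]_n$. For any completely multiplicative $\rho : \F_p[x]_m \to \C$, unique factorization into monic irreducibles gives the Euler product $L_\rho(T) = \prod_P (1 - \rho(P) T^{\deg P})^{-1}$, and squarefree monic polynomials correspond to products of distinct monic irreducibles, so
\[ \sum_{u \text{ sqf}} \rho(u) T^{\deg u} = \prod_P \bigl(1 + \rho(P) T^{\deg P}\bigr) = \prod_P \frac{1 - \rho(P)^2 T^{2 \deg P}}{1 - \rho(P) T^{\deg P}} = \frac{L_\rho(T)}{L_{\rho^2}(T^2)}, \]
where the products are over monic irreducibles in $\F_p[x]$ and all identities hold as formal power series in $T$. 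Extracting the $T^n$-coefficient and substituting $\rho = \chi \circ \psi$ yields the stated expression.

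I do not expect a substantial obstacle: the theorem is essentially a packaging of Fourier inversion on $G$ with the classical squarefree/Euler product identity over $\F_p[x]$. The only point to be careful about is the justification that $L_\rho(T)$ and $L_{\rho^2}(T^2)$ are well-defined as formal power series (which holds since the number of monic polynomials of each degree is finite) and that both sides of the Euler product identity agree coefficient by coefficient, which is immediate from unique factorization.
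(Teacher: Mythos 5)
Your proposal is correct and follows essentially the same route as the paper: reduce to a sum of $w(\psi(u))$ over squarefree $u$ of degree $n$ via the criterion that $\Delta(f)$ is a unit iff $\overline{f}$ is squarefree, apply Fourier inversion on $G$, and identify the inner sum as $[L_{\chi\circ\psi}(T)/L_{\chi^2\circ\psi}(T^2)]_n$ using complete multiplicativity. The only difference is that you spell out the Euler-product justification of the last step, which the paper leaves as a one-line remark.
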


\begin{theorem}\label{thm-density:sqf-1}
Suppose that the constant coefficient of $\Sigma$ is defined by congruence conditions mod $p$.
Let $(G, \psi, w)$ be a $\Sigma$-admissible triple.
If $p = 2$, then $P_1^{\sqf}(\Sigma) = 0$, and if $p$ is odd, then
\[ P_1^{\sqf}(\Sigma) = \left(1 - \frac{1}{p}\right) \sum_{\chi \in \widehat{G}} \hat{w}(\chi) \left[\sum_{c \in \F_p} \frac{\chi(\psi(x + c))^2}{1 + \chi(\psi(x + c)) \, T} \cdot \frac{L_{\chi \circ \psi}(T)}{L_{\chi^2 \circ \psi}(T^2)}\right]_{n - 2}. \]
\end{theorem}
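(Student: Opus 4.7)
For $p = 2$, the statement is immediate from Stickelberger's theorem, which asserts $\mathrm{disc}(f) \equiv 0$ or $1 \pmod 4$ for any monic $f \in \Z_2[x]$; this forces $v_2(\mathrm{disc}(f)) \in \{0\} \cup \{k : k \geq 2\}$, so $P_1^{\sqf}(\Sigma) = 0$. The rest of the proof assumes $p$ odd and parallels that of Theorem~\ref{thm-density:sqf-0}, with an additional classification step identifying the residue classes $u = \overline{f} \in \F_p[x]_m$ of degree $n$ that can support polynomials with $v_p(\mathrm{disc}(f)) = 1$.

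The first task is to show that $v_p(\mathrm{disc}(f)) = 1$ forces $\overline{f} = (x + c)^2 h(x)$ for a unique $c \in \F_p$ with $h$ squarefree of degree $n - 2$ and coprime to $x + c$. By Hensel's lemma, $f$ factors as $f = f_1 f_2$ over $\Z_p$ so that $\overline{f_1}$ collects the repeated factors of $\overline{f}$ and $\overline{f_2}$ is the squarefree coprime complement; then $\mathrm{disc}(f) = \mathrm{disc}(f_1) \mathrm{disc}(f_2) \mathrm{Res}(f_1, f_2)^2$ with the last two factors $p$-adic units, so $v_p(\mathrm{disc}(f)) = v_p(\mathrm{disc}(f_1))$. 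A standard lower bound (if $\overline{f_1} = \prod g_i^{e_i}$ with the $g_i$ distinct irreducibles, then $v_p(\mathrm{disc}(f_1)) \geq \sum (e_i - 1) \deg g_i$) shows that $v_p(\mathrm{disc}(f_1)) = 1$ forces a single repeated factor with $e = 2$ and $\deg g = 1$, i.e., $\overline{f_1} = (x + c)^2$.

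Next, for each such $u = (x + c)^2 h$ I compute the density of lifts $f \in \Sigma$ with $\overline{f} = u$ and $v_p(\mathrm{disc}(f)) = 1$. Writing $f_1 = x^2 + A x + B$ with $A = 2c + p A'$ and $B = c^2 + p B'$, the expansion $\mathrm{disc}(f_1) = A^2 - 4B = 4p(cA' - B') + p^2 A'^2$ shows (using $p$ odd) that $v_p(\mathrm{disc}(f_1)) = 1$ is equivalent to the nonvanishing congruence $B' - cA' \not\equiv 0 \pmod p$; equivalently, $v_p(f_1(-c)) = v_p(f(-c)) = 1$. This condition depends only on the coefficients of $f$ modulo $p^2$ and involves the constant coefficient $a_n$ with a unit coefficient. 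The hypothesis that the constant coefficient of $\Sigma$ is defined by congruence conditions modulo $p$ ensures $a_n$ ranges freely over all lifts of its mod-$p$ value as $f$ ranges through $\Sigma \cap \{\overline{f} = u\}$, so the linear condition holds on exactly a $(1 - 1/p)$-fraction of the lifts. The density contribution from $u$ is therefore $(1 - 1/p)\, w(\psi(u))$.

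The final step is Fourier inversion and generating-series manipulation, exactly parallel to the proof of Theorem~\ref{thm-density:sqf-0}. Applying $w = \sum_\chi \hat{w}(\chi) \chi$ and setting $\rho = \chi \circ \psi$, the sum of $\rho$ over monic squarefree $h$ of degree $n - 2$ coprime to $x + c$ is the $T^{n - 2}$-coefficient of $(L_\rho(T)/L_{\rho^2}(T^2))/(1 + \rho(x + c) T)$, since $L_\rho(T)/L_{\rho^2}(T^2)$ is the generating series for monic squarefree polynomials weighted by $\rho$ and division by $1 + \rho(x + c) T$ removes the local factor at $x + c$. Multiplying by $\rho((x + c)^2) = \rho(x + c)^2$ and summing over $c \in \F_p$ and $\chi \in \widehat{G}$ produces the stated formula. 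The main obstacle is the third step: one must carefully verify that the condition $v_p(\mathrm{disc}(f)) = 1$ factors through the Hensel factorization as a uniformly-distributed linear condition on $a_n$ modulo $p^2$, which is precisely where the hypothesis on the constant coefficient of $\Sigma$ intervenes.
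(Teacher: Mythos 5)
Your proof is correct and follows essentially the same route as the paper: classify the reductions $\overline{f}=(x+c)^2h$ supporting valuation-one discriminant, show via the constant-coefficient hypothesis that the condition $p^2\nmid f(-\tilde c)$ holds on a $(1-1/p)$-fraction of lifts (the paper's Proposition~\ref{prop-density:sqf-1-lift-count}), then apply Fourier expansion and the Euler-factor removal $L_{|\mu|\rho}(T)/(1+\rho(\psi(x+c))T)$. The only difference is that you re-derive the valuation criterion (and the $p=2$ vanishing, via Stickelberger) from scratch, whereas the paper cites it as Lemma~\ref{lem:sqfree-criterion} from \cite{ABZ2007}; your derivation is sound.
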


\begin{theorem}\label{thm-density:max}
Suppose that the $x^k$-coefficient of $\Sigma$ is defined by congruence conditions mod $p$ for all $k < \lfloor n/2 \rfloor$.
Let $(G, \psi, w)$ be a $\Sigma$-admissible triple.
Then \[ P^{\max}(\Sigma) = \sum_{\chi \in \widehat{G}} \hat{w}(\chi) \left[\frac{L_{\chi \circ \psi}(T)}{L_{\chi^2 \circ \psi}(T^2/p)}\right]_n. \]
\end{theorem}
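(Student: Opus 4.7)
My plan is to adapt the Fourier-analytic strategy underlying Theorem~\ref{thm-density:sqf-0} to the maximality condition, with only the local weight changing to reflect a Dedekind-style maximality test. The starting point is the decomposition
\[ P^{\max}(\Sigma) = \sum_{u \in \F_p[x]_m,\, \deg u = n} w(\psi(u)) \cdot \nu(u), \]
where $\Sigma$-admissibility has been used to rewrite $\lambda_p(\{f \in \Sigma : \overline{f} = u\})/\lambda_p(\Sigma)$ as $w(\psi(u))$, and $\nu(u)$ denotes the conditional probability, under the $\Sigma$-measure, that $\Z_p[x]/(f)$ is maximal given $\overline{f} = u$. The entire task is to identify $\nu$ and then feed it into a Fourier inversion.

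The heart of the proof is showing that $\nu$ is multiplicative in the factorization $u = \prod_i \overline{\pi}_i^{e_i}$: writing $d_i = \deg \overline{\pi}_i$, I expect $\nu(u) = \prod_i \mu_0(d_i, e_i)$, where $\mu_0(d, 0) = \mu_0(d, 1) = 1$ and $\mu_0(d, e) = 1 - p^{-d}$ for $e \geq 2$. The value $1$ for $e \leq 1$ is automatic (the factor is trivial for $e = 0$, and gives an unramified local factor for $e = 1$). For $e \geq 2$, Hensel's lemma factors $f$ uniquely as $F_{\overline{\pi}} \cdot (\text{coprime complement})$ with $\overline{F_{\overline{\pi}}} = \overline{\pi}^e$, and maximality of $\Z_p[x]/(f)$ at $\overline{\pi}$ is equivalent to maximality of $\Z_p[x]/(F_{\overline{\pi}})$; writing $F_{\overline{\pi}} = \pi_0^e + p h$ for any lift $\pi_0$ of $\overline{\pi}$, Dedekind's criterion says this is equivalent to $\overline{\pi} \nmid \overline{h}$ in $\F_p[x]$, which has probability $1 - p^{-d}$ when $\overline{h} \bmod \overline{\pi}$ is uniformly distributed on $\F_{p^d}$. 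The hypothesis that every $x^k$-coefficient of $\Sigma$ with $k < \lfloor n/2 \rfloor$ is defined by a mod-$p$ condition is what enforces this uniformity: since $e \geq 2$ forces $d \leq \lfloor n/2 \rfloor$, the residues of $F_{\overline{\pi}} \pmod{p^2}$ that govern Dedekind's test trace back to coefficients of $f$ of degree $< \lfloor n/2 \rfloor$, which are unconstrained within a fixed mod-$p^2$ coset. Independence across different $\overline{\pi}_i$'s follows from CRT applied to $\overline{h}$.

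With $\nu$ in hand, the rest is a routine generating-function computation. Expanding $w(\psi(u)) = \sum_{\chi \in \widehat{G}} \hat{w}(\chi) \chi(\psi(u))$ by Fourier inversion and interchanging sums yields
\[ P^{\max}(\Sigma) = \sum_{\chi \in \widehat{G}} \hat{w}(\chi) \left[\sum_{u \in \F_p[x]_m} (\chi \circ \psi)(u)\, \nu(u)\, T^{\deg u}\right]_n. \]
Since $\chi \circ \psi$ is completely multiplicative and $\nu$ is multiplicative, the inner series is an Euler product over irreducibles $\overline{\pi}$; with $Y = (\chi \circ \psi)(\overline{\pi})\, T^{d}$ the local factor collapses to
\[ 1 + Y + \left(1 - \frac{1}{p^d}\right)\sum_{e \geq 2} Y^e = \frac{1 - Y^2/p^d}{1 - Y}, \]
and assembling gives $L_{\chi \circ \psi}(T)/L_{\chi^2 \circ \psi}(T^2/p)$, from which extracting the $T^n$-coefficient yields the stated formula. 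The main obstacle is the content of the second paragraph: verifying that Dedekind's criterion cleanly factors across distinct $\overline{\pi}_i$ and truly yields probability $1 - p^{-d}$ under the $\Sigma$-measure, which is precisely where the $k < \lfloor n/2 \rfloor$ hypothesis must be used. Once that is settled, the argument is formally parallel to the squarefree case of Theorem~\ref{thm-density:sqf-0}, with the substitution $T^2 \mapsto T^2/p$ arising directly from the weight $1 - p^{-d}$ in the Euler local factor.
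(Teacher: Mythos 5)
Your route is genuinely different from the paper's: you compute the conditional maximality probability $\nu(u)$ for each fixed reduction $u=\prod_i \overline{\pi}_i^{e_i}$ and show it is multiplicative with local factor $1-p^{-d_i}$ at each repeated irreducible, then assemble an Euler product. The paper instead never conditions on $u$ at this stage: it applies Dedekind's criterion in the form $f\notin(p,g)^2$, uses $(p,g)^2\cap(p,h)^2=(p,gh)^2$ to run an inclusion--exclusion over squarefree $u$, and proves the single identity $\lambda_p(\Sigma\cap(p,\tilde u)^2)=p^{-\deg u}\,\lambda_p(\{f\in\Sigma: u^2\mid\overline f\})$ via an explicit bijection $\phi(f,g)=f+g$ with $g$ ranging over polynomials of degree $<\deg u$ with coefficients divisible by $p$. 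Your Euler-product computation at the end is correct and, after expanding $\prod_{i:e_i\ge 2}(1-p^{-d_i})$ by inclusion--exclusion, your $\nu(u)$ agrees exactly with the coefficient the paper's formula assigns to $\lambda_p(\{f:\overline f=u\})$, so the two arguments are reconcilable and yield the same answer.

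However, the central claim of your second paragraph --- that $\overline h \bmod \overline\pi$ is uniformly distributed and that the events at distinct $\overline{\pi}_i$ are independent under the $\Sigma$-measure --- is asserted rather than proved, and the one sentence you offer in support is incorrect as stated: the Hensel factor $F_{\overline\pi}$, and hence $\overline h\bmod\overline\pi$, depends on \emph{all} coefficients of $f$ modulo $p^2$, not only on those of degree $<\lfloor n/2\rfloor$. What is true, and what must be proved, is that perturbing $f\mapsto f+pg$ with $\deg g<d$ changes the Hensel factor by $F\mapsto F+p\delta$ with $\overline\delta\equiv\overline g\,\overline Q^{-1}\pmod{\overline\pi}$ (where $f=FQ$), so that as $\overline g$ ranges over all residues modulo $\overline\pi$ the quantity $\overline h\bmod\overline\pi$ sweeps out $\F_{p^d}$ uniformly; the hypothesis on the coefficients of degree $<\lfloor n/2\rfloor$ is needed precisely to guarantee that these perturbations stay inside $\Sigma$, and the joint statement over several $\overline{\pi}_i$ requires $g$ of degree up to $\sum_{e_i\ge2}d_i\le\lfloor n/2\rfloor$ together with CRT. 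You flag this as the main obstacle yourself, but as written it remains a gap; it is exactly the content of the paper's Proposition~\ref{prop-density:max-reduce} (there phrased as a bijection onto $(p,\tilde u)^2$ rather than as a statement about Hensel factors), and some version of that counting argument must be supplied before the Euler product can be formed.
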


We now discuss how to apply the above formulas by considering the general case, where $\Sigma = V_n(\Z_p)$.
It is easy to see that
\[ \frac{\lambda_p(\{f \in V_n(\Z_p) : \overline{f} = u\})}{\lambda_p(V_n(\Z_p))} = \frac{1}{p^n}. \]
Indeed, by reduction modulo $p$, the left hand side counts the probability that a monic polynomial over $\F_p$ of degree $n$ is equal to $u$.
Thus, a $V_n(\Z_p)$-admissible triple is a triple $(G, \psi, w)$ with the weight function $w$ satisfying $w(\psi(u)) = 1/p^n$ for all $u \in \F_p[x]_m$ of degree $n$.
Picking $G = \{1\}$ and $w$ to be the constant function with value $1/p^n$ (and $\psi$ to be the trivial homomorphism) gives us a $V_n(\Z_p)$-admissible triple, and we have $\hat{w}(1) = w(1) = 1/p^n$ since $G = \{1\}$.
The dual group $\widehat{G}$ consists of only the trivial character $1$, and $1 \circ \psi = \mathbf{1}$ is the all-one function from $\F_p[x]_m$ to $\C$.
Plugging the formula of $\hat{w}$ into the above formulas and evaluating $L_{\mathbf{1}}(T) = (1 - pT)^{-1}$ gives us the value of $P_0^{\sqf}(\Sigma)$, $P_1^{\sqf}(\Sigma)$, and $P^{\max}(\Sigma)$.

In the previous paragraph, when picking a $V_n(\Z_p)$-admissible triple $G$, we could have picked any finite group $G$; a general candidate is the group $G = 1 + y \F_p[y]/(y^n)$, with the monoid homomorphism $\psi : \F_p[x]_m \to G$ defined by
\[ \psi(x^k + a_1 x^{k - 1} + \ldots + a_k) = 1 + a_1 y + a_2 y^2 + \ldots + a_k y^k \bmod{y^n}. \]
However, taking $G = \{1\}$ allows us to write the densities of $\Sigma$ using sums with fewer terms and makes the computation of the Fourier transform $\hat{w}$ much easier (trivial in this case).
In our applications, this flexibility allows us to choose a relatively small $G$ and minimize the work needed to calculate the Fourier transform $\hat{w}$ and the generating series $L_{\chi \circ \psi}(T)$ across all characters $\chi \in \widehat{G}$.

Finally, our method does not work directly when the constant coefficients of all polynomials in $\Sigma$ are not divisible by $p$, which is needed to prove Theorem~\ref{thm-local:copconst}-\ref{thm-local:sub1-copconst}.
But we can slightly modify this method, where we replace the domain of $\psi : \F_p[x]_m \to G$ with the set $\F_p[x]_{m, x \nmid u}$ of monic polynomials over $\F_p$ that are not divisible by $x$; see \S\ref{section:density-formula-unit}.
In fact, this also allows us to handle the case where $\Sigma$ is defined by a finite set of divisibility conditions mod $p$, in addition to congruence conditions mod $p^2$.

\subsection{Proof of Theorem~\ref{thm-density:sqf-0} and Theorem~\ref{thm-density:sqf-1}}

We start with a criterion that determines if a monic polynomial over $\Z_p$ has squarefree discriminant.

\begin{lemma}\label{lem:sqfree-criterion}
(\cite[Proposition 6.7]{ABZ2007})
Let $p$ be a prime number and $f \in \Z_p[x]$ be a monic non-constant polynomial.
Denote by $\Delta(f)$ the discriminant of $f$, and by $\overline{f}$ the reduction of $f$ modulo $p$.
\begin{itemize}
    \item   $\Delta(f)$ is a unit if and only if $\overline{f} \in \F_p[x]$ is squarefree.
    \item   $\Delta(f)$ has valuation $1$ if and only if $p \neq 2$ and the following holds: there exists $c \in \F_p$ and $u \in \F_p[x]_m$ squarefree such that $x + c \nmid u$, $\overline{f} = (x + c)^2 u$, and $p^2 \nmid f(-\tilde{c})$ for some (any) lift $\tilde{c}$ of $c$ in $\Z_p$.
\end{itemize}
\end{lemma}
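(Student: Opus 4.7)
The plan is to reduce both bullets to a local analysis near each repeated factor of $\overline{f}$. For the first bullet, the identity $\overline{\Delta(f)} = \Delta(\overline{f})$ reduces the claim to the classical fact that a polynomial over a field has nonzero discriminant if and only if it is squarefree.

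For the second bullet, I would first factor $\overline{f} = \prod_i g_i^{e_i}$ into distinct monic irreducibles in $\F_p[x]$ and apply Hensel's lemma to obtain a compatible factorization $f = \prod_i f_i$ in $\Z_p[x]$ with $\overline{f_i} = g_i^{e_i}$. The discriminant-resultant identity
\[ \Delta(f) = \prod_i \Delta(f_i) \prod_{i < j} \mathrm{Res}(f_i, f_j)^2, \]
together with the fact that resultants of pairwise-coprime-mod-$p$ polynomials are $p$-adic units, gives $v_p(\Delta(f)) = \sum_i v_p(\Delta(f_i))$. Each $i$ with $e_i \geq 2$ contributes at least $1$ to this sum, because in $\overline{\Q}_p$ the roots of $f_i$ all reduce to the same residue; therefore $v_p(\Delta(f)) = 1$ forces exactly one $i_0$ with $e_{i_0} \geq 2$ and $v_p(\Delta(f_{i_0})) = 1$.

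The next task is to show that $v_p(\Delta(f_{i_0})) = 1$ forces $\deg g_{i_0} = 1$ and $e_{i_0} = 2$. Passing to the unramified extension of $\Z_p$ of residue degree $d = \deg g_{i_0}$, the polynomial $f_{i_0}$ Hensel-splits further into $d$ factors, one per Galois conjugate of the residue root, each contributing $\geq 1$ to $v_p(\Delta(f_{i_0}))$; so $d = 1$. Writing $g_{i_0} = x + c$ and substituting $y = x + \tilde c$, a Newton polygon analysis of $f_{i_0}(y - \tilde c) = y^{e_{i_0}} + p\, r(y)$ (which has all non-leading coefficients in $p\Z_p$) gives $v_p(\Delta(f_{i_0})) \geq e_{i_0} - 1$, ruling out $e_{i_0} \geq 3$. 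In the surviving case $f_{i_0}(x) = (x + \tilde c)^2 + p(\alpha x + \beta)$, a direct calculation yields
\[ \Delta(f_{i_0}) = -4 f_{i_0}(-\tilde c) + p^2 \alpha^2, \qquad f_{i_0}(-\tilde c) = p(\beta - \alpha \tilde c), \]
so for $p$ odd, $v_p(\Delta(f_{i_0})) = 1$ iff $p^2 \nmid f_{i_0}(-\tilde c)$, while for $p = 2$ both summands lie in $4\Z_2$, making valuation $1$ impossible. Translating back via $f(-\tilde c) = f_{i_0}(-\tilde c) \prod_{j \neq i_0} f_j(-\tilde c)$, with the remaining factors being $p$-adic units since $x + c$ is coprime to the $g_j$, gives the stated condition $p^2 \nmid f(-\tilde c)$; the squarefreeness of $u = \prod_{j \neq i_0} g_j^{e_j}$ and $x + c \nmid u$ follow since $v_p(\Delta(f)) = 1$ forces each $e_j = 1$ for $j \neq i_0$.

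The main obstacle is the Newton polygon estimate ruling out $e_{i_0} \geq 3$: the polygon of $y^{e_{i_0}} + p\, r(y)$ can assume various shapes depending on the precise valuations of the coefficients of $r$, and one must check uniformly across all shapes that the pairwise valuations of the roots sum to at least $(e_{i_0} - 1)/2$. This is handled by working slope-by-slope along the polygon and using that every slope has magnitude $\geq 1/e_{i_0}$, so that the contribution of any single segment of length $m$ to $\sum_{i < j} v(\beta_i - \beta_j)$ is at least $\binom{m}{2}/e_{i_0}$, and the cross-segment contributions only strengthen the bound.
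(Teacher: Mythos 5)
The paper does not prove this lemma at all; it is quoted verbatim from \cite[Proposition 6.7]{ABZ2007}, so there is no in-paper argument to compare against. Your proposal is a correct, self-contained derivation and follows the natural route: Hensel factorization of $f$ according to the distinct irreducible factors of $\overline{f}$, the multiplicativity $\Delta(\prod_i f_i)=\prod_i\Delta(f_i)\prod_{i<j}\mathrm{Res}(f_i,f_j)^2$ with unit cross-resultants, reduction to a single factor with non-squarefree reduction, elimination of residue degree $>1$ by unramified base change, elimination of multiplicity $\geq 3$ by the Newton polygon bound, and the explicit quadratic computation $\Delta(f_{i_0})=-4f_{i_0}(-\tilde c)+p^2\alpha^2$ which simultaneously handles the $p=2$ exclusion and the $p^2\nmid f(-\tilde c)$ criterion. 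Two small remarks. First, your stated worry about the Newton polygon case analysis is unnecessary: since every root $\beta$ of $y^{e}+p\,r(y)$ has $v(\beta)\geq 1/e$, every pair satisfies $v(\beta_i-\beta_j)\geq\min(v(\beta_i),v(\beta_j))\geq 1/e$, giving $v(\Delta)\geq 2\binom{e}{2}/e=e-1$ in one line with no slope-by-slope bookkeeping. Second, the phrase ``the roots of $f_i$ all reduce to the same residue'' is imprecise when $\deg g_i>1$ (they reduce to the $\deg g_i$ distinct roots of $g_i$); what you actually need, and what is true, is that some pair of roots shares a residue when $e_i\geq 2$, or more simply that $\overline{f_i}=g_i^{e_i}$ is not squarefree so the first bullet gives $v_p(\Delta(f_i))\geq 1$. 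You may also want one sentence verifying that $p^2\nmid f(-\tilde c)$ is independent of the lift $\tilde c$, which follows since $\overline{f}'(-c)=0$ when $(x+c)^2\mid\overline{f}$.
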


\begin{proof}[Proof of Theorem~\ref{thm-density:sqf-0}]
By applying Lemma~\ref{lem:sqfree-criterion} and the fact that $(G, \psi, w)$ is $\Sigma$-admissible, we get
\[ P_0^{\sqf}(\Sigma) = \sum_{\substack{u \in \F_p[x]_m \\ \deg(u) = n}} |\mu(u)| \; \frac{\lambda_p(\{f \in \Sigma : \overline{f} = u\})}{\lambda_p(\Sigma)} = \sum_{\substack{u \in \F_p[x]_m \\ \deg(u) = n}} |\mu(u)| \; w(\psi(u)). \]
Applying Fourier expansion on $w$ gives
\[ P_0^{\sqf}(\Sigma) = \sum_{\chi \in \widehat{G}} \hat{w}(\chi) \sum_{\substack{u \in \F_p[x]_m \\ \deg(u) = n}} |\mu(u)| \; \chi(\psi(u)) = \sum_{\chi \in \widehat{G}} \hat{w}(\chi) \left[L_{|\mu| (\chi \circ \psi)}(T)\right]_n = \sum_{\chi \in \widehat{G}} \hat{w}(\chi) \left[\frac{L_{\chi \circ \psi}(T)}{L_{\chi^2 \circ \psi}(T^2)}\right]_n, \]
    where the last equality holds since $\chi \circ \psi$ is completely multiplicative.
\end{proof}

Now we prove Theorem~\ref{thm-density:sqf-1}.
Lemma~\ref{lem:sqfree-criterion} gives $P_1^{\sqf}(\Sigma) = 0$ if $p = 2$, so it remains to consider the case where $p$ is odd.
We start by counting, for each $c \in \F_p$ and $u \in \F_p[x]_m$, the "size" of the set of monic lifts $f \in \Sigma$ of $(x + c)^2 u$ such that $p^2 \nmid f(-\tilde{c})$.

\begin{proposition}\label{prop-density:sqf-1-lift-count}
Suppose that the constant coefficient of $\Sigma$ is defined by congruence conditions mod $p$.
Then for any $c \in \F_p$ and $u \in \F_p[x]_m$ of degree $n - 2$, letting $\tilde{c}$ be an arbitrary $p$-adic lift of $c$, we have
\[ \lambda_p\left(\{f \in \Sigma : \overline{f} = (x + c)^2 u, \, p^2 \nmid f(-\tilde{c})\}\right) = \left(1 - \frac{1}{p}\right) \lambda_p\left(\{f \in \Sigma : \overline{f} = (x + c)^2 u\}\right). \]
\end{proposition}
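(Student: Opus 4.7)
The plan is to exploit the freedom in the constant coefficient $a_n$ to decouple the condition $p^2 \nmid f(-\tilde{c})$ from the other coefficients. First I would note that the constraint $\overline{f} = (x + c)^2 u$ forces $f(-\tilde{c}) \equiv \overline{f}(-c) = 0 \pmod{p}$, so $f(-\tilde{c}) \in p\Z_p$; whether $p^2 \nmid f(-\tilde{c})$ is therefore determined by $f(-\tilde{c}) \bmod p^2$. Writing $f(x) = x^n + a_1 x^{n-1} + \ldots + a_n$, we have $f(-\tilde{c}) = B(a_1, \ldots, a_{n-1}) + a_n$, where $B = (-\tilde{c})^n + \sum_{i=1}^{n-1} a_i (-\tilde{c})^{n-i}$ is independent of $a_n$.

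Next I would unwind the hypothesis that the constant coefficient of $\Sigma$ is defined by congruence conditions mod $p$. Since $\Sigma$ is cut out by congruences mod $p^2$, this says that the image of $\Sigma$ in $(\Z/p^2)[x]_m$ is stable under translating $a_n$ by $p$; equivalently, the constraint on $a_n$ coming from $\Sigma$ is only a constraint mod $p$. Combined with $\overline{f} = (x+c)^2 u$, which forces $a_n \equiv c^2 u(0) \pmod{p}$, we see that the set $\Sigma_u := \{f \in \Sigma : \overline{f} = (x+c)^2 u\}$ fibres over the first $n - 1$ coefficients in such a way that, for each allowed $(a_1, \ldots, a_{n-1})$, the coordinate $a_n$ ranges freely over the entire coset $c^2 u(0) + p\Z_p$. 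By Fubini, it therefore suffices to show that the conditional probability of $p^2 \nmid f(-\tilde{c})$, integrated over this coset, is $1 - 1/p$.

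For the conditional computation I would parametrise $a_n = c^2 u(0) + p t$ with $t \in \Z_p$ carrying its normalised Haar measure. Then $f(-\tilde{c}) = B + c^2 u(0) + p t$, and since $B + c^2 u(0) \equiv 0 \pmod{p}$ we may write $B + c^2 u(0) = p s$ for some $s \in \Z_p$, giving $f(-\tilde{c}) = p(s + t)$. The condition $p^2 \nmid f(-\tilde{c})$ is exactly $s + t \not\equiv 0 \pmod{p}$, which is satisfied on $p - 1$ out of $p$ residue classes of $t$ modulo $p$, i.e.\ on a subset of $\Z_p$ of measure $(p - 1)/p = 1 - 1/p$. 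Assembling the fibrewise factor $1 - 1/p$ with Fubini produces the stated identity.

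The argument is essentially a single clean Fubini computation; the only place where care is needed is in the second step, to ensure that the mod-$p$ translation invariance of $\Sigma$ really does promote the constraint on $a_n$ to the entire mod-$p$ coset without any residual mod-$p^2$ restriction, so that the conditional $a_n$-measure on each fibre is the full Haar measure on $c^2 u(0) + p\Z_p$. Once this structural point is settled, the rest is routine.
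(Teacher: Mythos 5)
Your proposal is correct and is essentially the paper's own argument: the paper likewise fixes the non-constant coefficients, observes that the hypothesis lets $a_n$ range over all $p$ lifts mod $p^2$ of the forced residue class, and notes that $p^2 \mid f(-\tilde{c})$ holds for exactly one of them. Your write-up merely makes the Fubini step and the decomposition $f(-\tilde{c}) = B + a_n$ explicit.
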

\begin{proof}
Fix the non-constant coefficients of $f$.
Then the constant coefficient of $\overline{f}$ has $p$ possible lifts mod $p^2$, and $p^2 \mid f(-\tilde{c})$ holds for exactly one of them.
\end{proof}

\begin{proof}[Proof of Theorem~\ref{thm-density:sqf-1}]
Assume that $p$ is odd.
For any $c \in \F_p$, define the function $\mathbf{1}_{x + c} : \F_p[x]_m \to \C$ as follows: $\mathbf{1}_{x + c}(u)$ is equal to $1$ if $x + c \nmid u$ and $0$ if $x + c \mid u$.
For any monic polynomial over $\F_p$ of form $(x + c)^2 u$ with $u$ squarefree, there is exactly one possible choice for $c$, since such polynomials cannot be divisible by $(x + c)^2$ for more than one value of $c$.
As a result, Lemma~\ref{lem:sqfree-criterion} yields
\[ P_1^{\sqf}(\Sigma) = \sum_{c \in \F_p} \sum_{\substack{u \in \F_p[x]_m \\ \deg(u) = n - 2}} \mathbf{1}_{x + c}(u) \, |\mu(u)| \; \frac{\lambda_p(\{f \in \Sigma : \overline{f} = (x + c)^2 u, \, p^2 \nmid f(-\tilde{c})\})}{\lambda_p(\Sigma)}. \]
Applying Proposition~\ref{prop-density:sqf-1-lift-count} and the fact that $(G, \psi, w)$ is $\Sigma$-admissible respectively gives
\begin{align*}
    P_1^{\sqf}(\Sigma)
    &= \left(1 - \frac{1}{p}\right) \sum_{c \in \F_p} \sum_{\substack{u \in \F_p[x]_m \\ \deg(u) = n - 2}} \mathbf{1}_{x + c}(u) \, |\mu(u)| \; \frac{\lambda_p(\{f \in \Sigma : \overline{f} = (x + c)^2 u\})}{\lambda_p(\Sigma)} \\
    &= \left(1 - \frac{1}{p}\right) \sum_{c \in \F_p} \sum_{\substack{u \in \F_p[x]_m \\ \deg(u) = n - 2}} \mathbf{1}_{x + c}(u) \, |\mu(u)| \; w(\psi((x + c)^2 u)).
\end{align*}
Next, applying Fourier expansion on $w$ gives
\begin{align*}
    P_1^{\sqf}(\Sigma)
    &= \left(1 - \frac{1}{p}\right) \sum_{\chi \in \widehat{G}} \hat{w}(\chi) \sum_{c \in \F_p} \sum_{\substack{u \in \F_p[x]_m \\ \deg(u) = n - 2}} \mathbf{1}_{x + c}(u) \, |\mu(u)| \; \chi(\psi((x + c)^2 u)) \\
    &= \left(1 - \frac{1}{p}\right) \sum_{\chi \in \widehat{G}} \hat{w}(\chi) \sum_{c \in \F_p} \chi(\psi(x + c))^2 \sum_{\substack{u \in \F_p[x]_m \\ \deg(u) = n - 2}} \mathbf{1}_{x + c}(u) \, |\mu(u)| \; \chi(\psi(u)) \\
    &= \left(1 - \frac{1}{p}\right) \sum_{\chi \in \widehat{G}} \hat{w}(\chi) \sum_{c \in \F_p} \chi(\psi(x + c))^2 \left[L_{\mathbf{1}_{x + c} \, |\mu| \, (\chi \circ \psi)}(T)\right]_{n - 2}.
\end{align*}
For each $\chi \in \widehat{G}$, since $\chi \circ \psi$ is completely multiplicative, we get
\[ L_{\mathbf{1}_{x + c} \, |\mu| \, (\chi \circ \psi)}(T) = \frac{L_{|\mu| \, (\chi \circ \psi)}(T)}{1 + \chi(\psi(x + c)) \, T} = \frac{1}{1 + \chi(\psi(x + c)) \, T} \cdot \frac{L_{\chi \circ \psi}(T)}{L_{\chi^2 \circ \psi}(T^2)}. \]
Applying this equality to the previous equality yields the desired formula.
\end{proof}

\subsection{Proof of Theorem~\ref{thm-density:max}}

We start with a criterion that determines, given a monic polynomial $f \in \Z_p[x]$, whether $\Z_p[x]/(f(x))$ is the maximal order of $\Q_p[x]/(f(x))$.

\begin{lemma}[Dedekind's criterion]\label{lem-density:max-criterion}
(\cite[Corollary 3.2]{ABZ2007})
Fix a prime number $p$.
Let $f$ be a monic polynomial over $\Z_p$.
Then the ring $\Z_p[x]/(f(x))$ is the maximal order in $\Q_p[x]/(f(x))$ if and only if $f \notin (p, g)^2$ for any $g \in \Z_p[x]$ monic such that $\overline{g}$ is irreducible, where $(p, g)$ is the ideal of $\Z_p[x]$ generated by $p$ and $g$.
\end{lemma}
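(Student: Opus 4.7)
The plan is to reduce the statement to a local question at each prime of $R := \Z_p[x]/(f)$ lying above $p$, and then translate the local regularity condition back into the ideal-theoretic statement $f \notin (p, g)^2$.

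First, I would set up the semilocal picture. The ring $R$ is a finite $\Z_p$-algebra, hence Noetherian and semilocal, and its maximal ideals all contain $p$ because $R/pR \cong \F_p[x]/(\overline{f})$ is Artinian. These maximal ideals are in bijection with the monic irreducible factors $\overline{g}$ of $\overline{f}$, via $\overline{g} \mapsto \mathfrak{q}_g := (p, g)/(f)$ for any monic lift $g \in \Z_p[x]$ of $\overline{g}$. Since $R$ is finite over $\Z_p$, it is the maximal order of $\Q_p[x]/(f)$ if and only if (i) $R$ is reduced, equivalently $f$ is squarefree in $\Q_p[x]$, and (ii) each localization $R_{\mathfrak{q}_g}$ is a discrete valuation ring. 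If (i) fails because some $\Q_p$-irreducible factor $f_i$ of $f$ appears with multiplicity $\geq 2$, then picking any monic $g$ with $\overline{g}$ dividing $\overline{f_i}$ gives $f_i \in (p, g)$, so $f \in (f_i^2) \subseteq (p, g)^2$, and both sides of the claimed equivalence correctly report non-maximality.

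Second, assuming $f$ is squarefree in $\Q_p[x]$, I would analyze each localization $R_{\mathfrak{q}_g}$. The ring $\Z_p[x]_{(p, g)}$ is a two-dimensional regular local ring with regular system of parameters $\{p, g\}$, and $R_{\mathfrak{q}_g}$ is its quotient by the principal nonzero ideal $(f)$. By the standard regularity criterion for local rings, $R_{\mathfrak{q}_g}$ is a DVR (equivalently, a one-dimensional regular local ring) if and only if $f$ lies in the maximal ideal $\mathfrak{M} := (p, g)$ but not in $\mathfrak{M}^2 \cdot \Z_p[x]_{(p, g)}$. Membership $f \in \mathfrak{M}$ is automatic once $\overline{g} \mid \overline{f}$. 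To convert the local nonmembership $f \notin \mathfrak{M}^2 \cdot \Z_p[x]_{(p, g)}$ to the global nonmembership $f \notin (p, g)^2$, I would use the fact that $\Z_p[x]/(p, g)^2$ is itself a local Artinian ring (its radical $(p, g)/(p, g)^2$ is the unique maximal ideal, since $\Z_p[x]/(p, g) \cong \F_p[x]/(\overline{g})$ is a field), so localizing at $(p, g)$ leaves it unchanged. Consequently membership in $(p, g)^2$ is detected on the nose in $\Z_p[x]$.

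Combining the two steps, $R$ fails to be the maximal order if and only if some $R_{\mathfrak{q}_g}$ fails to be a DVR, if and only if some monic $g \in \Z_p[x]$ with $\overline{g}$ an irreducible factor of $\overline{f}$ satisfies $f \in (p, g)^2$. The lemma quantifies over all monic $g$ with $\overline{g}$ irreducible, not merely those dividing $\overline{f}$, but this is harmless: if $\overline{g} \nmid \overline{f}$, then $f \notin (p, g)$ already, so $f \notin (p, g)^2$ automatically. The main obstacle will be the careful passage between the local condition (regularity of the quotient of the two-dimensional regular local ring by the principal ideal $(f)$) and the global condition ($f \notin (p, g)^2$ in $\Z_p[x]$); this is handled cleanly via the observation that $\Z_p[x]/(p, g)^2$ is already local.
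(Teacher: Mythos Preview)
The paper does not supply its own proof of this lemma; it is quoted verbatim from \cite[Corollary~3.2]{ABZ2007} and used as a black box. Your sketch is a correct and standard commutative-algebra proof of Dedekind's criterion: identify the maximal ideals of $R = \Z_p[x]/(f)$ with the monic irreducible factors of $\overline{f}$, reduce maximality of $R$ to regularity of each localization $R_{\mathfrak{q}_g}$, and then invoke the fact that the quotient of a two-dimensional regular local ring $(A,\mathfrak{M})$ by a nonzero $f \in \mathfrak{M}$ is regular precisely when $f \notin \mathfrak{M}^2$. The passage from the local condition $f \in \mathfrak{M}^2\,\Z_p[x]_{(p,g)}$ to the global condition $f \in (p,g)^2$ via the observation that $\Z_p[x]/(p,g)^2$ is already local (hence unchanged by localization at $(p,g)$) is the right way to close that gap. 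Your treatment of the non-squarefree case is also sound: if $R$ has a nonzero nilpotent $\epsilon$, then $\epsilon/p^k$ is integral over $R$ for every $k$ but eventually leaves $R$ since $R$ is $p$-adically separated, so $R$ is not integrally closed; and on the other side $f_i^2 \mid f$ forces $f \in (p,g)^2$ for any $g$ with $\overline{g} \mid \overline{f_i}$.
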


\begin{lemma}\label{lem-density:max-intersect}
(\cite[Lemma 3.3]{ABZ2007})
Fix $g, h \in \Z_p[x]$ monic and coprime modulo $p$.
Then \[ (p, g)^2 \cap (p, h)^2 = (p, gh)^2. \]
\end{lemma}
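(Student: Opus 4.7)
The plan is to set $I = (p, g)$ and $J = (p, h)$ and then reduce the claim to three successive steps: (i) show $I + J = \Z_p[x]$, (ii) upgrade this to $I^2 + J^2 = \Z_p[x]$ and deduce $I^2 \cap J^2 = I^2 J^2 = (IJ)^2$, and (iii) identify $IJ$ with $(p, gh)$. Piecing these together gives the lemma.

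For step (i), coprimality of $g, h$ modulo $p$ yields $\bar a \bar g + \bar b \bar h = 1$ in $\F_p[x]$ for some $\bar a, \bar b$; lifting to $\Z_p[x]$ produces $a, b, c \in \Z_p[x]$ with $ag + bh = 1 + pc$, so $1 = (ag) + (bh) - pc \in I + J$. For step (ii), writing $1 = i + j$ with $i \in I$, $j \in J$ and expanding $(i+j)^3 = (i^3 + 3i^2 j) + (3ij^2 + j^3)$ shows $1 \in I^2 + J^2$, hence $I^2 + J^2 = \Z_p[x]$. Then for $x \in I^2 \cap J^2$, write $1 = u + v$ with $u \in I^2$, $v \in J^2$; the decomposition $x = xu + xv$ has $xu \in J^2 \cdot I^2$ (as $x \in J^2$ and $u \in I^2$) and $xv \in I^2 \cdot J^2$, so $x \in I^2 J^2 = (IJ)^2$. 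The reverse inclusion $(IJ)^2 \subseteq I^2 \cap J^2$ is immediate from $(IJ)^2 = I^2 J^2 \subseteq I^2$ and $\subseteq J^2$.

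For step (iii), the expansion $IJ = (p^2, pg, ph, gh)$ is contained in $(p, gh)$ by inspection. For the reverse, multiplying $ag + bh = 1 + pc$ by $p$ gives $p = a(pg) + b(ph) - c \cdot p^2 \in IJ$, and since $gh \in IJ$ trivially, we obtain $(p, gh) \subseteq IJ$. Therefore $IJ = (p, gh)$ and $(IJ)^2 = (p, gh)^2$, completing the proof.

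The argument is essentially standard commutative algebra for coprime ideals, so there is no serious obstacle. The single subtle point is the passage from $I + J = R$ to $I^2 + J^2 = R$: expanding $(i+j)^2$ only places $1$ in $I^2 + IJ + J^2$, so one genuinely needs a higher power (here $(i+j)^3$) to separate cross-terms cleanly. Once that is handled, the identity $I^2 \cap J^2 = I^2 J^2$ and the elementary computation $IJ = (p, gh)$ finish the lemma.
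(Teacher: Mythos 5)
Your proof is correct. The paper itself gives no argument for this lemma---it is quoted directly from \cite[Lemma 3.3]{ABZ2007}---so there is nothing to compare against, but your chain $(p,g)^2\cap(p,h)^2 = I^2\cap J^2 = I^2J^2 = (IJ)^2 = (p,gh)^2$ is the standard comaximality argument, and each link (lifting the B\'ezout identity mod $p$, the cube trick to get $I^2+J^2=R$, the identity $A\cap B=AB$ for comaximal ideals, and the generator computation $IJ=(p,gh)$) is carried out correctly.
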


The following lemma follows from Lemma~\ref{lem-density:max-criterion} and Lemma~\ref{lem-density:max-intersect} using the inclusion-exclusion sieve.

\begin{lemma}\label{lem-density:max-overgeneral}
We have
\[ P^{\max}(\Sigma) = \sum_{u \in \F_p[x]_m} \mu(u) \; \frac{\lambda_p(\Sigma \cap (p, \tilde{u})^2)}{\lambda_p(\Sigma)}. \]
\end{lemma}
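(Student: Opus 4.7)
The plan is to apply inclusion--exclusion to Dedekind's criterion. By Lemma~\ref{lem-density:max-criterion}, the set of $f \in \Sigma$ for which $\Z_p[x]/(f(x))$ is not maximal is exactly $\Sigma \cap \bigcup_g (p, g)^2$, where the union is over monic $g \in \Z_p[x]$ with $\overline g$ irreducible in $\F_p[x]$. Since the ideal $(p, g)$ depends only on $\overline g$, I reindex the union by monic irreducibles $h \in \F_p[x]$, fixing an arbitrary lift $\tilde h \in \Z_p[x]$ of each, and work with $\Sigma \cap \bigcup_h (p, \tilde h)^2$. For monic $f$ of degree $n$, membership in $(p, \tilde h)^2$ forces $\overline{h}^2 \mid \overline f$ and hence $\deg h \le n/2$; thus on $V_n(\Z_p)$ this is effectively a finite union and inclusion--exclusion applies without convergence concerns.

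Next, I apply Lemma~\ref{lem-density:max-intersect} iteratively. For any finite set $\{h_1, \ldots, h_k\}$ of distinct monic irreducibles in $\F_p[x]$, the lifts $\tilde h_1, \ldots, \tilde h_k$ are pairwise coprime modulo $p$, and a straightforward induction on $k$ yields
\[ \bigcap_{i = 1}^k (p, \tilde h_i)^2 = \Bigl(p, \prod_{i = 1}^k \tilde h_i\Bigr)^2. \]
Finite subsets $S$ of the set of monic irreducibles in $\F_p[x]$ biject with squarefree monic polynomials $u = \prod_{h \in S} h \in \F_p[x]$, and under this bijection the inclusion--exclusion sign $(-1)^{|S| + 1}$ matches $-\mu(u)$. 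Taking $\tilde u = \prod_{h \in S} \tilde h$ as a lift of $u$, I obtain
\[ \lambda_p\Bigl(\Sigma \cap \bigcup_h (p, \tilde h)^2\Bigr) = -\sum_{\substack{u \in \F_p[x]_m,\ u \ne 1 \\ u\ \text{squarefree}}} \mu(u) \, \lambda_p(\Sigma \cap (p, \tilde u)^2). \]

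Extending the sum on the right to all $u \in \F_p[x]_m$ adds only the $u = 1$ term (since $\mu$ vanishes on non-squarefree inputs), and that term contributes $\mu(1) \lambda_p(\Sigma \cap \Z_p[x]) = \lambda_p(\Sigma)$ because $(p, 1)^2 = \Z_p[x]$. Rearranging gives
\[ \sum_{u \in \F_p[x]_m} \mu(u) \, \lambda_p(\Sigma \cap (p, \tilde u)^2) = \lambda_p(\Sigma) - \lambda_p\Bigl(\Sigma \cap \bigcup_h (p, \tilde h)^2\Bigr), \]
and dividing through by $\lambda_p(\Sigma)$ produces the claimed identity, since the right-hand side of the previous display equals $\lambda_p(\Sigma) \cdot P^{\max}(\Sigma)$ by Dedekind's criterion. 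The main obstacle is bookkeeping rather than substance: one has to verify that $(p, g)$ depends only on $\overline g$ so that both reindexing by $h$ and the independence of the formula from the chosen lifts $\tilde u$ are legitimate, confirm the iterated intersection identity by induction, and track the sign flip and the $u = 1$ boundary term when repackaging the inclusion--exclusion sum as a M\"obius sum.
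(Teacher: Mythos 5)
Your argument is correct and is exactly the inclusion--exclusion sieve that the paper invokes (the paper only states that the lemma ``follows from Lemma~\ref{lem-density:max-criterion} and Lemma~\ref{lem-density:max-intersect} using the inclusion-exclusion sieve'' without writing out the details). You have filled in all the relevant bookkeeping correctly: the independence of $(p,g)^2$ from the choice of lift, the finiteness of the union via the degree bound $2\deg h \le n$, the iterated intersection identity, and the sign and $u=1$ conventions that turn the alternating sum into a M\"obius sum.
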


To simplify the above formula, we prove the following proposition.

\begin{proposition}\label{prop-density:max-reduce}
Suppose that the $x^k$-coefficient of $\Sigma$ is defined by congruence conditions mod $p$ for all non-negative integers $k < \lfloor n/2 \rfloor$.
Then for any $u \in \F_p[x]_m$, we have
\[ \lambda_p(\Sigma \cap (p, \tilde{u})^2) = p^{-\deg(u)} \, \lambda_p(\{f \in \Sigma : u^2 \mid \overline{f}\}). \]
\end{proposition}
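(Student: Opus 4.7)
The plan is to compare the measures on each fiber of the reduction map $f \mapsto \overline{f}$. Set $d = \deg u$ and $m = \lfloor n/2 \rfloor$. If $d > m$, then no monic $g \in \F_p[x]$ of degree $n$ satisfies $u^2 \mid g$ and both sides of the claim vanish, so assume $d \leq m$. I would show that for every monic $g \in \F_p[x]$ of degree $n$ with $u^2 \mid g$,
\[
\lambda_p\bigl(\{f \in \Sigma : \overline{f} = g,\ f \in (p, \tilde u)^2\}\bigr) = p^{-d}\, \lambda_p\bigl(\{f \in \Sigma : \overline{f} = g\}\bigr),
\]
and then sum over $g$.

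Fix such $g = u^2 q$, choose a monic lift $\tilde q \in \Z_p[x]$ of $q$, and parametrize every lift of $g$ by $f = \tilde u^2 \tilde q + p\,h$ with $h \in \Z_p[x]$ of degree $\leq n-1$; then $f \bmod p^2$ is determined by $\overline h \in \F_p[x]_{<n}$. Starting from $(p, \tilde u)^2 = (p^2, p\tilde u, \tilde u^2)$ and using $\tilde u^2 \tilde q \in (p, \tilde u)^2$, one verifies that $f \in (p, \tilde u)^2$ if and only if $p h \in (p, \tilde u)^2$; an elementary calculation (reducing a putative representation $ph = p^2 a + p\tilde u b + \tilde u^2 c$ modulo $p$ to force $p \mid c$) then shows this is equivalent to $h \in (p, \tilde u)$, i.e., $u \mid \overline h$. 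Meanwhile, the hypothesis that the $x^k$-coefficient of $\Sigma$ is defined mod $p$ for $k < m$ means that, given $g$, the condition $f \in \Sigma$ imposes no constraint on $\overline h_k$ for $k < m$ and forces $\overline h_k \in S_k$ for some sets $S_k \subseteq \F_p$ when $k \geq m$. Hence the number of $\overline h$ with $f \in \Sigma$ equals $p^m \prod_{k \geq m} |S_k|$.

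The key step is the linear-algebraic claim that the map $v \mapsto ((uv)_m, (uv)_{m+1}, \ldots, (uv)_{n-1})$ from $\F_p[x]_{<n-d}$ to $\F_p^{n-m}$ is surjective with kernel of dimension $m - d$. Because $u$ is monic of degree $d$, its kernel consists of $v$ with $\deg(uv) < m$, i.e., $\deg v < m - d$, giving the stated kernel dimension, and the rank is then $(n-d) - (m-d) = n - m$. Writing $\overline h = uv$, the number of $\overline h$ satisfying both $u \mid \overline h$ and $\overline h_k \in S_k$ for $k \geq m$ equals $p^{m-d} \prod_{k \geq m} |S_k|$, yielding the ratio $p^{-d}$. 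Multiplying by $p^{-2n}$ converts counts to Haar measure, and summing over $g$ completes the proof. The main technical point is the surjectivity claim, which depends crucially on $u$ being monic and on the inequality $d \leq m$ (equivalently, $\deg u^2 \leq n$).
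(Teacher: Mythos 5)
Your argument is correct, but it is organized differently from the paper's proof, so a comparison is worthwhile. The paper constructs one global bijection $\phi(f, g) = f + g$ from $\left(\Sigma_{\bmod p^2} \cap (p, \tilde{u})^2\right) \times A_d$ onto $\{f \in \Sigma_{\bmod p^2} : u^2 \mid \overline{f}\}$, where $A_d$ is the set of polynomials of degree $< d$ with coefficients divisible by $p$: injectivity comes from the ideal identity $(p, \tilde{u})^2 \cap (p) = (p \tilde{u})$, surjectivity from the division algorithm, and the congruence hypothesis is invoked only for the coefficients $x^k$ with $k < d$. You instead work fiber-by-fiber over $\overline{f} = g$, characterize membership of a lift $\tilde{u}^2 \tilde{q} + ph$ in $(p, \tilde{u})^2$ as $u \mid \overline{h}$ (your elementary reduction forcing $p \mid c$ is correct and is the same computation that underlies the paper's injectivity step), and then count via rank--nullity of the linear map sending $v \in \F_p[x]_{<n-d}$ to the coefficients of $uv$ in degrees $m, \ldots, n-1$; the monicity of $u$ and the inequality $d \le m$ enter exactly where you say they do. Your route uses the hypothesis for all $k < \lfloor n/2 \rfloor$ rather than only $k < d$ (still within the proposition's hypotheses), and it trades the paper's translation-by-$A_d$ trick for a dimension count; both yield the factor $p^{-d}$.

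One small imprecision to fix in the write-up: $\Sigma$ is cut out by arbitrary congruence conditions mod $p^2$, which may couple different coefficients, so for a fixed $g$ the condition $f \in \Sigma$ constrains the tuple $(\overline{h}_k)_{k \ge m}$ to lie in some subset $S' \subseteq \F_p^{\,n-m}$ that need not be a product $\prod_{k \ge m} S_k$. This does not damage the argument: since your map $v \mapsto \left((uv)_m, \ldots, (uv)_{n-1}\right)$ is surjective with all fibers of size $p^{m-d}$, the constrained count is $p^{m-d}\,|S'|$ against $p^m\,|S'|$ unconstrained, for \emph{any} $S'$, and the ratio $p^{-d}$ survives. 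You should state the counting in terms of $|S'|$ rather than $\prod_{k \ge m} |S_k|$.
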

\begin{proof}
Fix $u \in \F_p[x]_m$, and denote $d = \deg(u)$.
If $d > \lfloor n/2 \rfloor$, then both sides are zero, so now assume that $d \leq \lfloor n/2 \rfloor$.
Let $\Sigma_{\bmod p^2}$ denote the set of polynomials over $\Z/p^2 \Z$ of the form $f \bmod{p^2}$ for some $f \in \Sigma$.
Since $\Sigma$ is defined by congruence conditions mod $p^2$, we have
\[ \lambda_p(\Sigma \cap (p, \tilde{u})^2) = p^{-2n} \#(\Sigma_{\bmod p^2} \cap (p, \tilde{u})^2) \;\; \text{ and } \;\; \lambda_p(\{f \in \Sigma : u^2 \mid \overline{f}\}) = p^{-2n} \#\{f \in \Sigma_{\bmod p^2} : u^2 \mid \overline{f}\}. \]
Thus, it suffices to show that
\[ \#(\Sigma_{\bmod p^2} \cap (p, \tilde{u})^2) = p^{-d} \# \{f \in \Sigma_{\bmod p^2} : u^2 \mid \overline{f}\}. \]

Let $A_d$ be the set of polynomials over $\Z/p^2 \Z$ of degree less than $d$ with coefficients divisible by $p$.
Consider the map $\phi : \left(\Sigma_{\bmod p^2} \cap (p, \tilde{u})^2\right) \times A_d \to \{f \in \Sigma_{\bmod p^2} : u^2 \mid \overline{f}\}$ defined by
\[ \phi(f, g) = f + g. \]
Since $(p, \tilde{u})^2 \subseteq (p, \tilde{u}^2)$ and the $x^k$-coefficient of $\Sigma$ is defined by congruence conditions mod $p$ for all $k < d$, the map $\phi$ is well-defined.
Since $|A_d| = p^d$, it now suffices to show that $\phi$ is bijective.

First, suppose that $f_1 + g_1 = f_2 + g_2$ for some $f_1, f_2 \in \Sigma_{\bmod p^2} \cap (p, \tilde{u})^2$ and $g_1, g_2 \in A_d$.
Then we have $g_2 - g_1 = f_1 - f_2 \in (p, \tilde{u})^2 \cap (p) = (p \tilde{u})$.
Since $g_1, g_2 \in A_d$, we have $\deg(g_2 - g_1) < d = \deg(u)$.
Thus, we get $g_1 = g_2$ and $f_1 = f_2$.
This proves that $\phi$ is injective.

Now we prove that $\phi$ is surjective.
Fix any $h \in \Sigma_{\bmod p^2}$ such that $u^2 \mid \overline{h}$.
Then there exists $f_0, g_0 \in \F_p[x]$ such that $h = \tilde{u}^2 f_0 + p \tilde{g_0}$, where $\tilde{g_0}$ is an arbitrary lift of $g_0$ over $\Z/p^2 \Z$.
By division algorithm, we can write $g_0 = ug_1 + g_2$ for some $g_1, g_2 \in \F_p[x]$ such that $\deg(g_2) < \deg(u) = d$.
Then we have
\[ h = \tilde{u}^2 f_0 + p \tilde{g_0} = \tilde{u}^2 f_0 + p \tilde{u} \tilde{g_1} + p \tilde{g_2}, \]
    where $h - p \tilde{g_2} = \tilde{u}^2 f_0 + p \tilde{u} \tilde{g_1} \in (p, \tilde{u})^2$ and $p \tilde{g_2} \in A_d$ since $\deg(g_2) < d$.
Furthermore, since the $x^k$-coefficient of $\Sigma$ is defined by congruence conditions mod $p$ for all non-negative integers $k < d$, $p \tilde{g_2} \in A_d$ implies $h - p \tilde{g_2} \in \Sigma_{\bmod p^2}$, and so $h - p \tilde{g_2} \in \Sigma_{\bmod p^2} \cap (p, \tilde{u})^2$.
This shows that $\phi$ is surjective.
\end{proof}

\begin{proof}[Proof of Theorem~\ref{thm-density:max}]
By Lemma~\ref{lem-density:max-overgeneral} and Proposition~\ref{prop-density:max-reduce}, we get
\[ P^{\max}(\Sigma) = \sum_{u \in \F_p[x]_m} \frac{\mu(u)}{p^{\deg(u)}} \cdot \frac{\lambda_p(\{f \in \Sigma : u^2 \mid \overline{f}\})}{\lambda_p(\Sigma)} = \sum_{\substack{u, v \in \F_p[x]_m \\ 2 \deg(u) + \deg(v) = n}} \frac{\mu(u)}{p^{\deg(u)}} \cdot \frac{\lambda_p(\{f \in \Sigma : \overline{f} = u^2 v\})}{\lambda_p(\Sigma)}. \]
Applying the fact that $(G, \psi, w)$ is $\Sigma$-admissible and then applying Fourier expansion on $w$ gives
\[ P^{\max}(\Sigma) = \sum_{\substack{u, v \in \F_p[x]_m \\ 2 \deg(u) + \deg(v) = n}} \frac{\mu(u)}{p^{\deg(u)}} \; w(\psi(u^2 v)) = \sum_{\chi \in \widehat{G}} \hat{w}(\chi) \sum_{\substack{u, v \in \F_p[x]_m \\ 2 \deg(u) + \deg(v) = n}} \frac{\mu(u)}{p^{\deg(u)}} \; \chi(\psi(u^2 v)). \]
Rearranging gives
\[ P^{\max}(\Sigma) = \sum_{\chi \in \widehat{G}} \hat{w}(\chi) \sum_{\substack{u, v \in \F_p[x]_m \\ 2 \deg(u) + \deg(v) = n}} \frac{\mu(u) \chi(\psi(u))^2}{p^{\deg(u)}} \; \chi(\psi(v)) = \sum_{\chi \in \widehat{G}} \hat{w}(\chi) \left[L_{\mu \cdot (\chi^2 \circ \psi)}(T^2/p) \; L_{\chi \circ \psi}(T)\right]_n. \]
Finally, for each $\chi \in \widehat{G}$, since $\chi^2 \circ \psi$ is completely multiplicative, we have $L_{\mu \cdot (\chi^2 \circ \psi)}(T^2/p) = L_{\chi^2 \circ \psi}(T^2/p)^{-1}$.
Substituting into the above equality gives the desired formula.
\end{proof}

\section{Proof of Theorem~\ref{thm-local:sub1}}\label{section:sub1}

Let $n \geq 2$ and $p$ be a prime number.
Fix $b_1 \in \Z_p$, and recall that
\[ V_n(\Z_p)_{a_1 = b_1} = \{f(x) = x^n + a_1 x^{n - 1} + \ldots + a_n \in \Z_p[x] : a_1 = b_1\}. \]
Then Theorem~\ref{thm-local:sub1} follows from the following exact formula for the densities of $V_n(\Z_p)_{a_1 = b_1}$.

\begin{theorem}\label{thm-sub1:main}
For any $n \geq 2$ and prime number $p$,
\begin{align}
    P^{\max}(V_n(\Z_p)_{a_1 = b_1}) &= \begin{cases} 1 & \text{if } (n, p) = (2, 2) \text{ and } 2 \nmid b_1, \\ 1/2 & \text{if } (n, p) = (2, 2) \text{ and } 2 \mid b_1, \\ 1 - 1/p^2 & \text{otherwise,} \end{cases} \label{eq-sub1:max} \\
    P_0^{\sqf}(V_n(\Z_p)_{a_1 = b_1}) &= \begin{cases} 1 & \text{if } (n, p) = (2, 2) \text{ and } 2 \nmid b_1, \\ 0 & \text{if } (n, p) = (2, 2) \text{ and } 2 \mid b_1, \\ 1 - 1/p & \text{otherwise.} \end{cases} \label{eq-sub1:sqf-0}
\end{align}
We have $P_1^{\sqf}(V_n(\Z_2)_{a_1 = b_1}) = 0$ and if $p$ is odd, then
\begin{equation}\label{eq-sub1:sqf-1}
    P_1^{\sqf}(V_n(\Z_p)_{a_1 = b_1}) = \begin{cases} (p - 1)/p^2 & \text{if } n = 2, \\ \dfrac{(p - 1)^2}{p^2 (p + 1)} - \dfrac{(-1)^n (p - 1)^2}{p^n (p + 1)} + \dfrac{(-1)^n}{p^n} (p - 1) \iota_{n, p}(\overline{b_1}) & \text{if } n > 2, \end{cases}
\end{equation}
    where for any $b \in \F_p$,
\begin{equation}\label{eq-sub1:iota}
    \iota_{n, p}(b) = \frac{1}{p} \sum_{\chi \in \widehat{\F_p} \setminus \{1\}} \chi(b)^{-1} \sum_{c \in \F_p} \chi(c)^n = \begin{cases} 0 & \text{if } p \nmid n, \\ -1 & \text{if } p \mid n \text{ and } b \neq 0, \\ p - 1 & \text{if } p \mid n \text{ and } b = 0. \end{cases}
\end{equation}
\end{theorem}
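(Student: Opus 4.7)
The approach is to apply the general density formulas of Theorems~\ref{thm-density:sqf-0}, \ref{thm-density:sqf-1}, and~\ref{thm-density:max} to $\Sigma = V_n(\Z_p)_{a_1 = b_1}$, using a small abelian group $G$ to keep Fourier computations tractable. Take $G = \F_p$ viewed as an additive group, and define $\psi : \F_p[x]_m \to \F_p$ by $\psi(1) = 0$ and, for $u$ of positive degree, $\psi(u) = [x^{\deg(u) - 1}] u$. Since the coefficient of $x^{\deg - 1}$ in a product of monic polynomials equals the sum of the corresponding coefficients of its factors, $\psi$ is a monoid homomorphism. With $w(s) = p^{1-n} \mathbf{1}_{s = \overline{b_1}}$, viewing $\Sigma$ as $\Z_p^{n-1}$ with coordinates $a_2, \ldots, a_n$ makes $(G, \psi, w)$ admissible: the fiber over any degree-$n$ $u$ has normalized measure $p^{1-n}$ when $[x^{n-1}]u = \overline{b_1}$ and zero otherwise. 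Since only $a_1$ is fixed, the hypotheses on the $x^k$-coefficients in Theorems~\ref{thm-density:sqf-1} and~\ref{thm-density:max} hold for all $n \geq 2$. Fourier inversion gives $\hat{w}(\chi) = p^{-n} \chi(\overline{b_1})^{-1}$.

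Next, compute the generating series. The trivial character gives $L_1(T) = 1/(1-pT)$. For any nontrivial $\chi \in \widehat{\F_p}$ and any $d \geq 1$, the contribution to $L_{\chi \circ \psi}(T)$ from degree-$d$ polynomials is $p^{d-1} \sum_{a \in \F_p} \chi(a) = 0$, so $L_{\chi \circ \psi}(T) = 1$. Since $\F_p$ has prime order $p$, for $p$ odd every nontrivial $\chi$ has $\chi^2 \neq 1$, while for $p = 2$ every $\chi$ satisfies $\chi^2 = 1$. Hence for nontrivial $\chi$, $L_{\chi^2 \circ \psi}(T^2)$ equals $1$ when $p$ is odd and $1/(1 - 2T^2)$ when $p = 2$, and similarly $L_{\chi^2 \circ \psi}(T^2/p)$ equals $1$ or $1/(1 - T^2)$.

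The formulas \eqref{eq-sub1:sqf-0} and \eqref{eq-sub1:max} now follow by direct evaluation. The trivial-character contribution to $P_0^{\sqf}$ is $p^{-n} \left[\frac{1-pT^2}{1-pT}\right]_n = (p^n - p^{n-1})/p^n = 1 - 1/p$, and that to $P^{\max}$ is $p^{-n} \left[\frac{1-T^2}{1-pT}\right]_n = 1 - 1/p^2$. For $p$ odd the nontrivial-character quotients $L_{\chi \circ \psi}(T)/L_{\chi^2 \circ \psi}(T^2)$ and $L_{\chi \circ \psi}(T)/L_{\chi^2 \circ \psi}(T^2/p)$ both equal $1$, contributing nothing at $T^n$ for $n \geq 1$; for $p = 2$ the unique nontrivial $\chi$ yields $1 - 2T^2$ or $1 - T^2$ respectively, whose $T^n$-coefficient is nonzero only at $n = 2$, producing the exceptional cases.

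The most involved step is \eqref{eq-sub1:sqf-1}, where the odd-$p$ case of Theorem~\ref{thm-density:sqf-1} is applied (the $p = 2$ case is immediate). Since $\psi(x + c) = c$, the trivial contribution is $(1 - 1/p) p^{-n} \left[\frac{p(1-pT^2)}{(1+T)(1-pT)}\right]_{n-2}$; using the partial fraction decomposition $\frac{1}{(1+T)(1-pT)} = \frac{1}{p+1}\bigl(\frac{1}{1+T} + \frac{p}{1-pT}\bigr)$, the $T^{n-2}$-coefficient evaluates in closed form to $(p-1)(p^{n-1} - p(-1)^n)/(p+1)$ for $n \geq 3$ (and to $p$ for $n = 2$), yielding the first two terms of \eqref{eq-sub1:sqf-1}. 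For a nontrivial $\chi$, expanding $\chi(c)^2/(1 + \chi(c)T)$ as a geometric series gives $\left[\sum_{c \in \F_p} \frac{\chi(c)^2}{1 + \chi(c)T}\right]_{n-2} = (-1)^n \sum_{c \in \F_p} \chi(c)^n$, and summing over nontrivial $\chi$ with weight $(1 - 1/p) \hat{w}(\chi)$ produces exactly $(p-1)(-1)^n \iota_{n,p}(\overline{b_1})/p^n$ by the definition of $\iota_{n, p}$. Finally, \eqref{eq-sub1:iota} follows from $\chi(c)^n = \chi(nc)$ for additive $\chi$: the inner sum $\sum_c \chi(nc)$ is $0$ when $p \nmid n$ and $p$ when $p \mid n$, reducing the expression to $\sum_{\chi \neq 1} \chi(b)^{-1}$, which equals $p - 1$ for $b = 0$ and $-1$ otherwise. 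The main obstacle is isolating the $\iota_{n, p}$ factor from the nontrivial-character sum and verifying that the partial fraction gives the claimed clean expression for the trivial contribution; neither is conceptually difficult once the Fourier framework is in place.
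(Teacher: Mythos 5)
Your proposal is correct and follows essentially the same route as the paper: the same admissible triple $(\F_p,\varphi_1,w)$ with $\psi$ the $x^{\deg-1}$-coefficient map, the same Fourier transform $\hat w(\chi)=p^{-n}\chi(\overline{b_1})^{-1}$, the vanishing $L_{\chi\circ\psi}(T)=1$ for $\chi\neq 1$, and the same partial-fraction and geometric-series evaluations (the paper merely packages the $P^{\max}$ and $P_0^{\sqf}$ computations into one proposition with a parameter $z$, and works with $V_n(\Z_p)_{a_1\equiv b_1\bmod p^2}$ rather than your equivalent identification of $\Sigma$ with $\Z_p^{n-1}$).
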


It remains to prove Theorem~\ref{thm-sub1:main}.
First, since squarefree discriminant and maximality are mod $p^2$ conditions, the densities of $V_n(\Z_p)_{a_1 = b_1}$ are the same as those of the set
\[ V_n(\Z_p)_{a_1 \equiv b_1 \bmod{p^2}} = \{f(x) = x^n + a_1 x^{n - 1} + \ldots + a_n \in \Z_p[x] : a_1 \equiv b_1 \bmod{p^2}\}, \]
    which is defined by congruence conditions mod $p^2$.
We start by finding a $V_n(\Z_p)_{a_1 \equiv b_1 \bmod{p^2}}$-admissible triple $(G, \psi, w)$ as defined in Definition~\ref{def:admissible} and computing $\hat{w}(\chi)$ and $L_{\chi \circ \psi}(T)$ for all $\chi \in \widehat{G}$.
Then we apply the formulas obtained in \S\ref{section:density-formula} to prove Theorem~\ref{thm-sub1:main}.

Define the monoid homomorphism $\varphi_1 : \F_p[x]_m \to \F_p$ by the formula
\begin{equation}\begin{split}\label{eq:varphi1-def}
    \varphi_1(x^n + a_1 x^{n - 1} + \ldots + a_n) &= a_1, \\
    \varphi_1(1) &= 0.
\end{split}\end{equation}
Then it is easy to see that for any $u \in \F_p[x]_m$ of degree $n$,
\[ \frac{\lambda_p(\{f \in V_n(\Z_p)_{a_1 \equiv b_1 \bmod{p^2}} : \overline{f} = u\})}{\lambda_p(V_n(\Z_p)_{a_1 \equiv b_1 \bmod{p^2}})} = \begin{cases} 1/p^{n - 1} & \text{if } \varphi_1(u) = \overline{b_1}, \\ 0 & \text{if } \varphi_1(u) \neq \overline{b_1}. \end{cases} \]
Thus, $(\F_p, \varphi_1, w)$ is $V_n(\Z_p)_{a_1 \equiv b_1 \bmod{p^2}}$-admissible, where the weight function $w : \F_p \to \C$ is defined by
\[ w(c) = \begin{cases} 1/p^{n - 1} & \text{if } c = \overline{b_1}, \\ 0 & \text{if } c \neq \overline{b_1}. \end{cases} \]
By direct computation, the Fourier transform $\hat{w}$ of $w$ has the formula
\begin{equation}\label{eq-sub1:weightFT}
    \hat{w}(\chi) = \frac{1}{\# \F_p} \sum_{c \in \F_p} w(c) \chi(c)^{-1} = \frac{1}{p^n} \chi(\overline{b_1})^{-1}.
\end{equation}

Next, we compute $L_{\chi \circ \varphi_1}(T)$ for any $\chi \in \widehat{\F_p}$ such that $\chi \neq 1$.
Note that $1 \circ \varphi_1 = \mathbf{1}$, the all-one function on $\F_p[x]_m$.

\begin{lemma}\label{lem-sub1:char-gen}
For any $\chi \in \widehat{\F_p}$ such that $\chi \neq 1$, we have $L_{\chi \circ \varphi_1}(T) = 1$.
\end{lemma}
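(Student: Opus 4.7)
The plan is to expand the generating series directly from its definition and show that every non-constant coefficient vanishes by orthogonality of characters.

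First, I would note that $\varphi_1$ is a monoid homomorphism from $(\F_p[x]_m,\cdot)$ to $(\F_p,+)$, since for monic $f,g$ the $x^{\deg f + \deg g - 1}$ coefficient of $fg$ is the sum of the second coefficients of $f$ and $g$. In particular, $\varphi_1(1) = 0$ is the additive identity, so $\chi(\varphi_1(1)) = \chi(0) = 1$ for every character $\chi \in \widehat{\F_p}$. This yields $[L_{\chi \circ \varphi_1}(T)]_0 = 1$.

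Next, for each $n \geq 1$, I would parametrize monic polynomials of degree $n$ by their non-leading coefficients $(a_1,\ldots,a_n) \in \F_p^n$ and compute
\[ [L_{\chi \circ \varphi_1}(T)]_n = \sum_{(a_1,\ldots,a_n) \in \F_p^n} \chi(a_1) = p^{n-1} \sum_{a_1 \in \F_p} \chi(a_1). \]
Since $\chi$ is assumed non-trivial, the standard orthogonality relation $\sum_{a_1 \in \F_p} \chi(a_1) = 0$ forces this coefficient to vanish. Combining with the computation of the constant term gives $L_{\chi \circ \varphi_1}(T) = 1$.

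There is no genuine obstacle here; the entire content is the observation that $\varphi_1(u)$ depends only on the single coefficient $a_1$, so the sum over the remaining $n-1$ coefficients is a pure $p^{n-1}$ factor that does not interact with $\chi$, and the $a_1$-sum collapses by orthogonality. Most of the care will go into checking the degree $0$ case separately, where the identification $\varphi_1(1) = 0$ must be handled in accordance with the monoid structure.
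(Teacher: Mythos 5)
Your proposal is correct and follows essentially the same argument as the paper: the constant term is $1$ since $\varphi_1(1)=0$, and for each $n\geq 1$ the sum over degree-$n$ monic polynomials factors as $p^{n-1}\sum_{c\in\F_p}\chi(c)=0$ by orthogonality of the nontrivial character $\chi$.
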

\begin{proof}
Clearly, we have $[L_{\chi \circ \varphi_1}(T)]_0 = 1$.
Now fix some $n \geq 1$.
For each $c \in \F_p$, there exists exactly $p^{n - 1}$ polynomials $u \in \F_p[x]_m$ of degree $n$ whose $x^{n - 1}$-coefficient $\varphi_1(u)$ is equal to $c$.
Thus, expanding gives
\[ [L_{\chi \circ \varphi_1}(T)]_n = \sum_{\substack{u \in \F_p[x]_m \\ \deg(u) = n}} \chi(\varphi_1(u)) = p^{n - 1} \sum_{c \in \F_p} \chi(c), \]
    which is zero, since $\chi \neq 1$.
\end{proof}

We are now ready to prove Theorem~\ref{thm-sub1:main}.
We start by computing the densities $P^{\max}(V_n(\Z_p)_{a_1 = b_1})$ and $P_0^{\sqf}(V_n(\Z_p)_{a_1 = b_1})$.
Using the formula~\eqref{eq-sub1:weightFT} for $\hat{w}$, Theorem~\ref{thm-density:max} and Theorem~\ref{thm-density:sqf-0} respectively implies
\begin{align}
    P^{\max}(V_n(\Z_p)_{a_1 = b_1}) &= \frac{1}{p^n} \sum_{\chi \in \widehat{\F_p}} \chi(\overline{b_1})^{-1} \left[\frac{L_{\chi \circ \varphi_1}(T)}{L_{\chi^2 \circ \varphi_1}(T^2/p)}\right]_n, \label{eq-sub1:max-init} \\
    P_0^{\sqf}(V_n(\Z_p)_{a_1 = b_1}) &= \frac{1}{p^n} \sum_{\chi \in \widehat{\F_p}} \chi(\overline{b_1})^{-1} \left[\frac{L_{\chi \circ \varphi_1}(T)}{L_{\chi^2 \circ \varphi_1}(T^2)}\right]_n. \label{eq-sub1:sqf-0-init}
\end{align}
Then~\eqref{eq-sub1:max} and~\eqref{eq-sub1:sqf-0} follows by the following proposition.

\begin{proposition}
For any $n \geq 2$, $b \in \F_p$, and $z \in \C$,
\[ \frac{1}{p^n} \sum_{\chi \in \widehat{\F_p}} \chi(b)^{-1} \left[\frac{L_{\chi \circ \varphi_1}(T)}{L_{\chi^2 \circ \varphi_1}(zT^2)}\right]_n = \begin{cases} 1 & \text{if } (n, p) = (2, 2) \text{ and } b = 1, \\ 1 - z & \text{if } (n, p) = (2, 2) \text{ and } b = 0, \\ 1 - z/p & \text{otherwise.} \end{cases} \]
\end{proposition}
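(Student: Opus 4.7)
The plan is to evaluate the sum by splitting over characters $\chi \in \widehat{\F_p}$ and exploiting the extreme simplicity of $L_{\chi \circ \varphi_1}(T)$ coming from Lemma~\ref{lem-sub1:char-gen}. Since $\varphi_1(uv) = \varphi_1(u) + \varphi_1(v)$, the target $\F_p$ should be understood as the \emph{additive} group of $\F_p$; consequently $\chi^2 = 1$ iff $2\chi = 0$ in the dual, which (since squaring is doubling in the additive dual) means $\chi = 1$ when $p$ is odd and holds for every $\chi$ when $p = 2$.

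First I would record the two generating series that actually appear. For the trivial character, $1 \circ \varphi_1 = \mathbf{1}$, and summing over all monic polynomials by degree gives
\[ L_{\mathbf{1}}(T) = \sum_{k \geq 0} p^k T^k = \frac{1}{1 - pT}. \]
For $\chi \neq 1$, Lemma~\ref{lem-sub1:char-gen} gives $L_{\chi \circ \varphi_1}(T) = 1$. Substituting $zT^2$ for $T$ in each preserves these shapes.

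Next I would treat the case $p$ odd. Here $\chi^2 = 1 \Leftrightarrow \chi = 1$, so for $\chi \neq 1$ both numerator and denominator are $1$, whose $T^n$-coefficient vanishes for $n \geq 2$. Only $\chi = 1$ contributes, giving
\[ \left[\frac{1 - pzT^2}{1 - pT}\right]_n = p^n - z p^{n - 1} \quad (n \geq 2), \]
and dividing by $p^n$ yields $1 - z/p$, matching the ``otherwise'' branch. For $p = 2$, every $\chi$ satisfies $\chi^2 = 1$, so the denominator becomes $1/(1 - 2zT^2)$ for all $\chi$; the trivial character contributes $\frac{1 - 2zT^2}{1 - 2T}$ and the non-trivial character $\eta$ contributes $1 - 2zT^2$, multiplied by $\eta(b)^{-1} = \eta(b) = (-1)^b$. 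Reading off the $T^n$-coefficient gives $2^n - z\,2^{n-1}$ from the trivial character for any $n \geq 2$, and an extra $(-1)^b \cdot (-2z)$ from $\eta$ when $n = 2$ (vanishing for $n \geq 3$). Dividing by $2^n$ reproduces the three cases: $n \geq 3$ gives $1 - z/2$; $n = 2$, $b = 0$ gives $(4 - 2z - 2z)/4 = 1 - z$; $n = 2$, $b = 1$ gives $(4 - 2z + 2z)/4 = 1$.

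There is no serious obstacle: the whole argument is a bookkeeping exercise once one notices the dichotomy between $p$ odd (where $\chi^2 = 1$ forces $\chi = 1$) and $p = 2$ (where $\chi^2 = 1$ always, and the non-trivial character contributes an additional term only at $n = 2$). The only place one must take care is the identification that doubling a character of $(\F_p,+)$ corresponds to multiplying the ``index'' by $2$ in $\F_p$, so that the case split on $p$ is the correct one; and the arithmetic of the $\F_2$ character values at $b \in \{0,1\}$, which is what generates the exceptional $(n,p) = (2,2)$ entries.
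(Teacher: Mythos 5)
Your proof is correct and follows essentially the same route as the paper: isolate the trivial character's contribution $\bigl[(1-pzT^2)/(1-pT)\bigr]_n = (1-z/p)p^n$, kill the nontrivial characters for odd $p$ via Lemma~\ref{lem-sub1:char-gen} (since $\chi\neq 1$ forces $\chi^2\neq 1$), and for $p=2$ account for the quadratic character $\chi_2$ with $\chi_2^2=1$, whose ratio $1-2zT^2$ contributes only at $n=2$. The arithmetic in all three branches checks out.
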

\begin{proof}
First we note that
\[ \left[\frac{L_{\mathbf{1}}(T)}{L_{\mathbf{1}}(zT^2)}\right]_n = \left[\frac{1 - pzT^2}{1 - pT}\right]_n = \left[1 + pT + \frac{(1 - z/p) p^2T^2}{1 - pT}\right]_n = \left(1 - \frac{z}{p}\right) p^n. \]
Thus we can write
\[ \frac{1}{p^n} \sum_{\chi \in \widehat{\F_p}} \chi(b)^{-1} \left[\frac{L_{\chi \circ \varphi_1}(T)}{L_{\chi^2 \circ \varphi_1}(zT^2)}\right]_n = 1 - \frac{z}{p} + \frac{1}{p^n} \sum_{\chi \in \widehat{\F_p} \setminus \{1\}} \chi(b)^{-1} \left[\frac{L_{\chi \circ \varphi_1}(T)}{L_{\chi^2 \circ \varphi_1}(zT^2)}\right]_n. \]

If $p$ is odd, for any $\chi \in \widehat{\F_p}$, $\chi \neq 1$ implies $\chi^2 \neq 1$.
Then Lemma~\ref{lem-sub1:char-gen} implies that the final term in the right hand side is zero since $[1]_n = 0$, thus proving the proposition for $p$ odd.
If $p = 2$, then $\widehat{\F_2} = \{1, \chi_2\}$, where $\chi_2 : \F_2 \to \C^{\times}$ is the character defined by $\chi_2(0) = 1$ and $\chi_2(1) = -1$.
Then Lemma~\ref{lem-sub1:char-gen} implies
\[ \sum_{\chi \in \widehat{\F_p}} \chi(b)^{-1} \left[\frac{L_{\chi \circ \varphi_1}(T)}{L_{\chi^2 \circ \varphi_1}(zT^2)}\right]_n = 1 - \frac{z}{2} + \frac{\chi_2(b)^{-1}}{2^n} \left[1 - 2zT^2\right]_n = \begin{cases} 1 & \text{if } n = 2 \text{ and } b = 1, \\ 1 - z & \text{if } n = 2 \text{ and } b = 0, \\ 1 - z/2 & \text{otherwise.} \end{cases} \]
This completes the proof of the proposition.
\end{proof}

It remains to compute $P_1^{\sqf}(V_n(\Z_p)_{a_1 = b_1})$ for $p$ odd.
By Theorem~\ref{thm-density:sqf-1} and the formula~\eqref{eq-sub1:weightFT} for $\hat{w}$,
\begin{align}
    P_1^{\sqf}(V_n(\Z_p)_{a_1 = b_1})
    &= \frac{1 - 1/p}{p^n} \sum_{\chi \in \widehat{\F_p}} \chi(\overline{b_1})^{-1} \left[\sum_{c \in \F_p} \frac{\chi(\varphi_1(x + c))^2}{1 + \chi(\varphi_1(x + c)) \, T} \cdot \frac{L_{\chi \circ \varphi_1}(T)}{L_{\chi^2 \circ \varphi_1}(T^2)}\right]_{n - 2} \notag \\
    &= \frac{1 - 1/p}{p^n} \sum_{\chi \in \widehat{\F_p}} \chi(\overline{b_1})^{-1} \left[\sum_{c \in \F_p} \frac{\chi(c)^2}{1 + \chi(c) \, T} \cdot \frac{L_{\chi \circ \varphi_1}(T)}{L_{\chi^2 \circ \varphi_1}(T^2)}\right]_{n - 2}. \label{eq-sub1:sqf-1-init}
\end{align}
Since $p$ is odd, for any $\chi \in \widehat{\F_p}$, $\chi \neq 1$ implies $\chi^2 \neq 1$.
Thus applying Lemma~\ref{lem-sub1:char-gen} gives us
\begin{align*}
    P_1^{\sqf}(V_n(\Z_p)_{a_1 = b_1})
    &= \frac{1 - 1/p}{p^n} \left(\left[\frac{p}{1 + T} \cdot \frac{1 - pT^2}{1 - pT}\right]_{n - 2} + \sum_{\chi \in \widehat{\F_p} \setminus \{1\}} \chi(\overline{b_1})^{-1} \left[\sum_{c \in \F_p} \frac{\chi(c)^2}{1 + \chi(c) \, T}\right]_{n - 2}\right) \\
    &= \frac{p - 1}{p^n} \left[\frac{1 - pT^2}{(1 + T)(1 - pT)}\right]_{n - 2} + \frac{1 - 1/p}{p^n} \sum_{\chi \in \widehat{\F_p} \setminus \{1\}} \chi(\overline{b_1})^{-1} \sum_{c \in \F_p} (-1)^{n - 2} \chi(c)^n \\
    &= \frac{p - 1}{p^n} \left[1 + \frac{p - 1}{p + 1} \left(\frac{1}{1 - pT} - \frac{1}{1 + T}\right)\right]_{n - 2} + \frac{(-1)^n}{p^n} \cdot \frac{p - 1}{p} \sum_{\chi \in \widehat{\F_p} \setminus \{1\}} \chi(\overline{b_1})^{-1} \sum_{c \in \F_p} \chi(c)^n \\
    &= \frac{p - 1}{p^n} \left([1]_{n - 2} + \frac{p - 1}{p + 1} (p^{n - 2} - (-1)^{n - 2})\right) + \frac{(-1)^n}{p^n} (p - 1) \iota_{n, p}(\overline{b_1}),
\end{align*}
    where $\iota_{n, p}$ is as defined in~\eqref{eq-sub1:iota}.
If $n = 2$, then the above equality simplifies to $P_1^{\sqf}(V_2(\Z_p)_{a_1 = b_1}) = (p - 1)/p^2$ since $\iota_{2, p}(b) = 0$ for any $b \in \F_p$, thus proving~\eqref{eq-sub1:sqf-1} for $n = 2$.
If $n > 2$, then the above equality simplifies to
\[ P_1^{\sqf}(V_n(\Z_p)_{a_1 = b_1}) = \frac{(p - 1)^2}{p^2 (p + 1)} - \frac{(-1)^n (p - 1)^2}{p^n (p + 1)} + \frac{(-1)^n}{p^n} (p - 1) \iota_{n, p}(\overline{b_1}), \]
    proving~\eqref{eq-sub1:sqf-1} for $n > 2$.
This completes the proof of Theorem~\ref{thm-sub1:main}.

\section{Proof of Theorem~\ref{thm-local:sub2}}\label{section:sub2}

Let $n \geq 3$ and $p$ be a prime number.
Fix $b_1, b_2 \in \F_p$, and recall that
\begin{align*}
    V_n(\Z_p)_{a_1 = b_1, a_2 = b_2} &= \{f(x) = x^n + a_1 x^{n - 1} + \ldots + a_n \in \Z_p[x] : a_1 = b_1, a_2 = b_2\}, \\
    V_n(\Z_p)_{a_1 = b_1} &= \{f(x) = x^n + a_1 x^{n - 1} + \ldots + a_n \in \Z_p[x] : a_1 = b_1\}.
\end{align*}
We first state the exact formula for the densities of $V_n(\Z_p)_{a_1 = b_1, a_2 = b_2}$ in terms of the densities of $V_n(\Z_p)_{a_1 = b_1}$.
For each integer $n \geq 0$, odd prime number $p$, and $b \in \F_p$, define the quantity
\begin{equation}\label{eq-sub2:delta}
    \delta_{n, p}(b) = \begin{cases} -\left(\dfrac{-1}{p}\right)^{(n - 1)/2} p^{(n + 1)/2} & \text{if } 2 \nmid n \text{ and } b \neq 0, \\ \left(\dfrac{-1}{p}\right)^{(n - 1)/2} (p - 1) p^{(n + 1)/2} & \text{if } 2 \nmid n \text{ and } b = 0, \\ \left(\dfrac{b}{p}\right) \left(\dfrac{-1}{p}\right)^{n/2} p^{n/2 + 1} & \text{if } 2 \mid n, \end{cases}
\end{equation}
    and for any integer $t \geq 0$,
\begin{equation}\label{eq-sub2:H}
    H_{n, p, t}(b) = \sum_{\substack{k \leq n - 2 \\ p \nmid n - k}} (-1)^{\lceil k/2 \rceil} \left(\frac{n - k}{p}\right) \left(\frac{2}{p}\right)^{\lfloor k/2 \rfloor} \delta_{\lceil k/2 \rceil + t + 1, p}(b) + \sum_{\substack{0 \leq k \leq n - 2 \\ p \mid n - k}} (-1)^{\lceil k/2 \rceil} \left(\frac{2}{p}\right)^{\lfloor k/2 \rfloor} p \delta_{\lceil k/2 \rceil + t}(b).
\end{equation}
Then we prove the following formula.

\begin{theorem}\label{thm-sub2:main}
If $p = 2$, then $P_1^{\sqf}(V_n(\Z_2)_{a_1 = b_1, a_2 = b_2}) = 0$ and
\begin{align*}
    P^{\max}(V_n(\Z_2)_{a_1 = b_1, a_2 = b_2}) &= P^{\max}(V_n(\Z_2)_{a_1 = b_1}), \\
    P_0^{\sqf}(V_n(\Z_2)_{a_1 = b_1, a_2 = b_2}) &= P_0^{\sqf}(V_n(\Z_2)_{a_1 = b_1}).
\end{align*}

If $p$ is odd, $p \mid n$, and $p \nmid b_1$, then
\begin{align*}
    P^{\max}(V_n(\Z_p)_{a_1 = b_1, a_2 = b_2}) &= P^{\max}(V_n(\Z_p)_{a_1 = b_1}), \\
    P_0^{\sqf}(V_n(\Z_p)_{a_1 = b_1, a_2 = b_2}) &= P_0^{\sqf}(V_n(\Z_p)_{a_1 = b_1}), \\
    P_1^{\sqf}(V_n(\Z_p)_{a_1 = b_1, a_2 = b_2}) &= P_1^{\sqf}(V_n(\Z_p)_{a_1 = b_1}).
\end{align*}

If $p$ is odd, $p \mid n$, and $p \mid b_1$, then
\begin{align*}
    P^{\max}(V_n(\Z_p)_{a_1 = b_1, a_2 = b_2}) &= P^{\max}(V_n(\Z_p)_{a_1 = b_1}) + \frac{(-1)^{\lfloor n/2 \rfloor}}{p^{n + \lfloor n/2 \rfloor}} \left(\frac{2}{p}\right)^{\lfloor n/2 \rfloor} \delta_{\lceil n/2 \rceil + 1, p}(2 \overline{b_2}), \\
    P_0^{\sqf}(V_n(\Z_p)_{a_1 = b_1, a_2 = b_2}) &= P_0^{\sqf}(V_n(\Z_p)_{a_1 = b_1}) + \frac{(-1)^{\lfloor n/2 \rfloor}}{p^n} \left(\frac{2}{p}\right)^{\lfloor n/2 \rfloor} \delta_{\lceil n/2 \rceil + 1, p}(2 \overline{b_2}), \label{eq-sub2:sqf-0-odd-case3} \\
    P_1^{\sqf}(V_n(\Z_p)_{a_1 = b_1, a_2 = b_2}) &= P_1^{\sqf}(V_n(\Z_p)_{a_1 = b_1}) + \frac{(-1)^n (1 - 1/p)}{p^n} H_{n, p, 1}(2 \overline{b_2}).
\end{align*}

If $p$ is odd and $p \nmid n$, then
\begin{align*}
    P^{\max}(V_n(\Z_p)_{a_1 = b_1, a_2 = b_2}) &= P^{\max}(V_n(\Z_p)_{a_1 = b_1}) + \frac{(-1)^{\lfloor n/2 \rfloor}}{p^{n + \lfloor n/2 \rfloor}} \left(\frac{n}{p}\right) \left(\frac{2}{p}\right)^{\lfloor n/2 \rfloor} \delta_{\lceil n/2 \rceil, p}(2 \overline{b_2} - (1 - 1/\overline{n}) \overline{b_1}^2), \\
    P_0^{\sqf}(V_n(\Z_p)_{a_1 = b_1, a_2 = b_2}) &= P_0^{\sqf}(V_n(\Z_p)_{a_1 = b_1}) + \frac{(-1)^{\lfloor n/2 \rfloor}}{p^n} \left(\frac{n}{p}\right) \left(\frac{2}{p}\right)^{\lfloor n/2 \rfloor} \delta_{\lceil n/2 \rceil, p}(2 \overline{b_2} - (1 - 1/\overline{n}) \overline{b_1}^2), \\
    P_1^{\sqf}(V_n(\Z_p)_{a_1 = b_1, a_2 = b_2}) &= P_1^{\sqf}(V_n(\Z_p)_{a_1 = b_1}) + \frac{(-1)^n (1 - 1/p)}{p^n} \left(\frac{n}{p}\right) H_{n, p, 0}(2 \overline{b_2} - (1 - 1/\overline{n}) \overline{b_1}^2).
\end{align*}
\end{theorem}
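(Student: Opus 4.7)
The plan is to generalize the Fourier-theoretic setup of Section~\ref{section:sub1} by enlarging the target group so that it tracks both the $x^{n-1}$- and $x^{n-2}$-coefficients simultaneously. I would take
\[ G = 1 + y\, \F_p[y]/(y^3), \qquad \psi(x^m + a_1 x^{m-1} + a_2 x^{m-2} + \ldots + a_m) = 1 + a_1 y + a_2 y^2 \bmod{y^3}, \]
with the convention $a_j = 0$ when $m < j$. The identity $\psi(uv) = \psi(u) \psi(v)$ follows by direct polynomial expansion. After the standard reduction from $V_n(\Z_p)_{a_1 = b_1, a_2 = b_2}$ to the mod-$p^2$ subset $V_n(\Z_p)_{a_1 \equiv b_1, a_2 \equiv b_2 \bmod{p^2}}$, defining $w : G \to \C$ by $w(\gamma) = p^{-(n-2)}$ when $\gamma = 1 + \overline{b_1} y + \overline{b_2} y^2$ and $w(\gamma) = 0$ otherwise yields an admissible triple $(G, \psi, w)$ with $\hat{w}(\chi) = \chi(1 + \overline{b_1} y + \overline{b_2} y^2)^{-1}/p^n$.

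Next I would describe $\widehat{G}$ and compute $L_{\chi \circ \psi}(T)$ for each character. For $p$ odd, the truncated logarithm $1 + ay + by^2 \mapsto (a, b - a^2/2)$ identifies $G$ with $(\F_p)^2$, so the characters are indexed by $(s, t) \in \F_p \times \F_p$ via $\chi_{s, t}(1 + ay + by^2) = \zeta_p^{sa + t(b - a^2/2)}$ with $\zeta_p = e^{2\pi i / p}$. When $t = 0$ the character factors through $\varphi_1$ of Section~\ref{section:sub1}, so the contribution of those characters reproduces exactly the densities of $V_n(\Z_p)_{a_1 = b_1}$ established in Theorem~\ref{thm-sub1:main}. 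When $t \neq 0$, for any monic $u$ of degree $\geq 2$ the inner sum $\sum_{a_2 \in \F_p} \chi_{s, t}(\psi(u))$ is a non-trivial $\F_p$-character sum and therefore vanishes, so
\[ L_{\chi_{s, t} \circ \psi}(T) = 1 + G(s, t)\, T, \qquad G(s, t) = \sum_{c \in \F_p} \zeta_p^{sc - tc^2/2}, \]
a quadratic Gauss sum which, after completing the square, equals $\zeta_p^{s^2/(2t)} g(t)$ for an explicit $g(t)$ with $|g(t)| = \sqrt{p}$. For $p = 2$, one has $G \cong \Z/4\Z$, and a direct check shows that the generator character again yields a degree-$\leq 1$ polynomial $L_{\chi \circ \psi}(T)$ whose contribution to $P^{\max}$ and $P_0^{\sqf}$ cancels term-by-term against the $t = 0$ part, producing the first block of the theorem.

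Substituting into Theorems~\ref{thm-density:sqf-0}, \ref{thm-density:sqf-1} and \ref{thm-density:max}, each bracketed $n$-th coefficient becomes a closed-form rational expression in $G(s, t)$ and $G(2s, 2t)$, since both $L_{\chi_{s, t} \circ \psi}(T)$ and $L_{\chi_{s, t}^2 \circ \psi}(T^k)$ are linear in $T$. After expanding $\chi_{s, t}(1 + \overline{b_1} y + \overline{b_2} y^2)^{-1} = \zeta_p^{-s \overline{b_1} - t(\overline{b_2} - \overline{b_1}^2/2)}$ and combining with the $\zeta_p^{s^2/(2t)}$-phase arising from the powers of $G(s, t)$, the sum over $s$ becomes another quadratic character sum whose completed-square argument is precisely the invariant $2 \overline{b_2} - (1 - 1/\overline{n}) \overline{b_1}^2$; the factor $1/\overline{n}$ appears through the combinatorics of extracting the $T^n$-coefficient from the rational function in $T$. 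Evaluating via quadratic Gauss sums yields the factor $\delta_{\lceil n/2 \rceil, p}(\cdot)$ together with the prefactors $(n/p)$, $(2/p)^{\lfloor n/2 \rfloor}$ and $(-1)^{\lfloor n/2 \rfloor}$; in the $P_1^{\sqf}$ formula the extra factor $\sum_c \chi(\psi(x + c))^2/(1 + \chi(\psi(x + c))\, T)$ from Theorem~\ref{thm-density:sqf-1} introduces an additional $c$-summation which produces the auxiliary function $H_{n, p, t}$.

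The main obstacle is the case split at $p \mid n$, where the $s^2$-coefficient of the inner character sum degenerates to zero. When $p \nmid \overline{b_1}$ the surviving linear sum in $s$ vanishes, so the $t \neq 0$ correction cancels and the density reduces to that of $V_n(\Z_p)_{a_1 = b_1}$; when $p \mid \overline{b_1}$ the sum over $s$ equals $p$ and survives to give the $\delta_{\lceil n/2 \rceil + 1, p}(2 \overline{b_2})$ correction with the shifted degree index. Tracking the Gauss-sum signs, the Legendre-symbol prefactors, and the compatibility with the piecewise definitions~\eqref{eq-sub2:delta} and \eqref{eq-sub2:H} uniformly through all four cases, particularly in the $P_1^{\sqf}$ formula where the additional $c$-summation interacts non-trivially with the square-completion in $s$, is the most technically delicate bookkeeping of the proof.
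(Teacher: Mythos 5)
Your proposal follows essentially the same route as the paper: the same group $1 + y\,\F_p[y]/(y^3)$, the same homomorphism and weight, the same split of characters into those with $t = 0$ (equivalently $\chi(y^2+1)=1$, which factor through the $a_1$-only setup of \S\ref{section:sub1} and reproduce the densities of $V_n(\Z_p)_{a_1=b_1}$) and those with $t \neq 0$, and the same quadratic-Gauss-sum evaluation producing the invariant $2\overline{b_2}-(1-1/\overline{n})\overline{b_1}^2$ and the degeneration at $p \mid n$; your truncated-logarithm coordinates $(s,t)$ are exactly the paper's parametrization of $\chi$ by its values at $y^2/2+y+1$ and $y^2+1$. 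One correction for $p=2$: the contributions of the two order-$4$ characters do not ``cancel against the $t=0$ part''; rather each vanishes individually because $C_{\chi^2} = 1 + \chi(1+y)^2 = 0$, so $L_{\chi^2\circ\psi}(zT^2)$ is constant and the quotient $L_{\chi\circ\psi}(T)/L_{\chi^2\circ\psi}(zT^2)$ is linear in $T$, hence has zero $T^n$-coefficient for $n \geq 2$ --- without this observation the quotient would be an infinite series and the first block of the theorem would not follow.
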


We obtain Theorem~\ref{thm-local:sub2} from Theorem~\ref{thm-sub2:main} as follows.
The error term bound in the formulas given in Theorem~\ref{thm-local:sub2} follows from Theorem~\ref{thm-sub2:main}, Theorem~\ref{thm-local:sub1} on the asymptotic densities of $V_n(\Z_p)_{a_1 = b_1}$, and the estimate $\delta_{n, p}(b) \ll p^{\lceil n/2 \rceil + 1}$, which also implies
\[ H_{n, p, t}(b) \ll \sum_{\substack{0 \leq k \leq n - 2 \\ p \nmid n - k}} p^{\lceil (k + 2t + 2)/4 \rceil + 1} + \sum_{\substack{0 \leq k \leq n - 2 \\ p \mid n - k}} p^{\lceil (k + 2t)/4 \rceil + 2} \ll p^{\lceil (n + 2t)/4 \rceil + 1}. \]
Given integers $n \geq 4$, $b_1$, and $b_2$, the statement about when $P^{\max}(V_n(\Z_p)_{a_1 = b_1, a_2 = b_2}) = 1 - 1/p^2$ holds for all prime numbers $p$ follow from the formula $P^{\max}(V_n(\Z_p)_{a_1 = b_1}) = 1 - 1/p^2$ from Theorem~\ref{thm-local:sub1} and the fact that $\delta_{n, p}(b) = 0$ if and only if $n$ is even and $b = 0$.
For positivity of the densities, it suffices to check when $P_0^{\sqf}(V_n(\Z_p)_{a_1 = b_1, a_2 = b_2})$ is positive for $p$ odd.
By the bound $|\delta_{n, p}(b)| \leq (p - 1) p^{\lfloor n/2 \rfloor + 1}$, if $n \geq 4$, then
\[ |\delta_{\lceil n/2 \rceil + t, p}(b)| \leq (p - 1) p^{\lfloor (\lceil n/2 \rceil + t)/2 \rfloor + 1} \leq (p - 1) p^{\lceil n/4 \rceil + 1} < (p - 1) p^{n - 1}. \]
Since Theorem~\ref{thm-sub1:main} says $P_0^{\sqf}(V_n(\Z_p)_{a_1 = b_1}) = 1 - 1/p$, we get $P_0^{\sqf}(V_n(\Z_p)_{a_1 = b_1, a_2 = b_2}) > 0$ if $n \geq 4$.

We now prove Theorem~\ref{thm-sub2:main}.
First, since squarefree discriminant and maximality are mod $p^2$ conditions, the densities for $V_n(\Z_p)_{a_1 = b_1, a_2 = b_2}$ are the same as those of the set
\[ V_n(\Z_p)_{a_1 \equiv b_1 \bmod{p^2}, a_2 \equiv b_2 \bmod{p^2}} = \{f(x) = x^n + a_1 x^{n - 1} + \ldots + a_n \in \Z_p[x] : a_1 \equiv b_1 \bmod{p^2}\}, \]
    which is defined by congruence conditions mod $p^2$.
We start by finding a $V_n(\Z_p)_{a_1 \equiv b_1 \bmod{p^2}, a_2 \equiv b_2 \bmod{p^2}}$-admissible triple $(G, \psi, w)$ as defined in Definition~\ref{def:admissible} and computing $\hat{w}(\chi)$ and $L_{\chi \circ \psi}(T)$ for all $\chi \in \widehat{G}$.
Then we apply the formulas obtained in \S\ref{section:density-formula} to prove Theorem~\ref{thm-sub2:main}.

Define the group $G_p$ by
\[ G_p = 1 + y \F_p[y]/(y^3) = \{a_2 y^2 + a_1 y + 1 \bmod{y^3} : a_1, a_2 \in \F_p\}. \]
Define the monoid homomorphism $\varphi_2 : \F_p[x]_m \to G_p$ by the formula
\begin{equation}\begin{split}\label{eq:varphi2-def}
    \varphi_2(x^n + a_1 x^{n - 1} + \ldots + a_n) &= a_2 y^2 + a_1 y + 1, \\
    \varphi_2(x + c) &= cy + 1, \\
    \varphi_2(1) &= 1.
\end{split}\end{equation}
For convenience, we denote
\[ \gamma_0 = \overline{b_2} y^2 + \overline{b_1} y + 1 \in G_p. \]
Then it is easy to see that for any $u \in \F_p[x]_m$ of degree $n$,
\[ \frac{\lambda_p(\{f \in V_n(\Z_p)_{a_1 \equiv b_1 \bmod{p^2}, a_2 \equiv b_2 \bmod{p^2}} : \overline{f} = u\})}{\lambda_p(V_n(\Z_p)_{a_1 \equiv b_1 \bmod{p^2}, a_2 \equiv b_2 \bmod{p^2}})} = \begin{cases} 1/p^{n - 2} & \text{if } \varphi_2(u) = \gamma_0, \\ 0 & \text{if } \varphi_2(u) \neq \gamma_0. \end{cases} \]
Thus, $(G_p, \varphi_2, w)$ is $V_n(\Z_p)_{a_1 \equiv b_1 \bmod{p^2}, a_2 \equiv b_2 \bmod{p^2}}$-admissible, where $w : G_p \to \C$ is defined by
\[ w(\gamma) = \begin{cases} 1/p^{n - 2} & \text{if } \gamma = \gamma_0, \\ 0 & \text{if } \gamma \neq \gamma_0. \end{cases} \]
By direct computation, the Fourier transform $\hat{w}$ of $w$ has the formula
\begin{equation}\label{eq-sub2:weightFT}
    \hat{w}(\chi) = \frac{1}{\# G_p} \sum_{\gamma \in G_p} w(\gamma) \chi(\gamma)^{-1} = \frac{1}{p^n} \chi(\gamma_0)^{-1}.
\end{equation}

Next, we compute $L_{\chi \circ \varphi_2}(T)$ for any $\chi \in \widehat{G_p}$ with $\chi \neq 1$.
For convenience, for any $\chi \in \widehat{G_p}$, denote
\begin{equation}\label{eq-sub2:C-chi}
    C_{\chi} = \sum_{c \in \F_p} \chi(cy + 1).
\end{equation}

\begin{lemma}\label{lem-sub2:char-gen}
For any $\chi \in \widehat{G_p}$ with $\chi \neq 1$, we have $L_{\chi \circ \varphi_2}(T) = 1 + C_{\chi} T$.
\end{lemma}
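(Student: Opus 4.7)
The plan is to compute $[L_{\chi \circ \varphi_2}(T)]_n$ directly for each $n \geq 0$ by expanding the defining sum of the generating series, exploiting the observation that $\varphi_2(u)$ depends only on the two subleading coefficients of $u$. For $n = 0$ the only monic polynomial is $u = 1$, and $\varphi_2(1) = 1$ yields $[L_{\chi \circ \varphi_2}]_0 = \chi(1) = 1$. For $n = 1$ the polynomials $u = x + c$ are parametrized by $c \in \F_p$, and $\varphi_2(x + c) = 1 + cy$, so by the definition~\eqref{eq-sub2:C-chi},
\[
[L_{\chi \circ \varphi_2}]_1 = \sum_{c \in \F_p} \chi(1 + cy) = C_\chi.
\]

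The main step is to show that $[L_{\chi \circ \varphi_2}]_n = 0$ for every $n \geq 2$. The plan is to parametrize a monic $u$ of degree $n$ by its coefficient tuple $(a_1, \ldots, a_n) \in \F_p^n$. By~\eqref{eq:varphi2-def} the image $\varphi_2(u) = 1 + a_1 y + a_2 y^2$ depends only on $(a_1, a_2)$, and the map $(a_1, a_2) \mapsto 1 + a_1 y + a_2 y^2$ is a bijection $\F_p^2 \to G_p$. Letting the remaining coordinates $a_3, \ldots, a_n$ vary freely contributes a factor of $p^{n - 2}$, giving
\[
[L_{\chi \circ \varphi_2}]_n = p^{n - 2} \sum_{\gamma \in G_p} \chi(\gamma),
\]
which vanishes by orthogonality of characters since $\chi \neq 1$.

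Collecting these cases yields $L_{\chi \circ \varphi_2}(T) = 1 + C_\chi T$, as claimed. This is essentially the same pattern used in Lemma~\ref{lem-sub1:char-gen}, only one step more refined because $G_p$ here records two coefficients rather than one; no step looks delicate. The one bookkeeping point to watch is that the character-sum cancellation relies on $n \geq 2$, so that $a_2$ appears as an independent coefficient and $(a_1, a_2)$ ranges over all of $\F_p^2$, which is precisely why $n = 0$ and $n = 1$ must be handled separately.
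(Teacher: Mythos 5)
Your proof is correct and follows essentially the same route as the paper: compute the coefficients directly, noting $[L_{\chi\circ\varphi_2}]_0 = 1$ and $[L_{\chi\circ\varphi_2}]_1 = C_\chi$ by definition, and for $n \geq 2$ use the fact that each fiber of $\varphi_2$ over $G_p$ has exactly $p^{n-2}$ monic degree-$n$ polynomials, so the coefficient vanishes by orthogonality since $\chi \neq 1$. Your explicit justification of the fiber count via the coefficient parametrization is a harmless elaboration of what the paper states without proof.
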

\begin{proof}
Clearly, $[L_{\chi \circ \varphi_2}(T)]_0 = 1$, and $[L_{\chi \circ \varphi_2}(T)]_1 = C_{\chi}$ holds by definition.
Now fix some $n \geq 2$.
For any $\gamma \in G_p$, there exists exactly $p^{n - 2}$ polynomials $u \in \F_p[x]_m$ of degree $n$ such that $\varphi_2(u) = \gamma$.
Thus, we get
\[ [L_{\chi \circ \varphi_2}(T)]_n = \sum_{\substack{u \in \F_p[x]_m \\ \deg(u) = n}} \chi(\varphi_2(u)) = p^{n - 2} \sum_{\gamma \in G_p} \chi(\gamma), \]
    which is zero, since $\chi \neq 1$.
\end{proof}

We break down the proof of Theorem~\ref{thm-sub2:main} into two main steps.
First, when expressing the densities of $V_n(\Z_p)_{a_1 = b_1, a_2 = b_2}$ as a summation over $\widehat{G_p}$, we show that the terms corresponding to $\chi \in \widehat{G_p}$ with $\chi(y^2 + 1) = 1$ sum up to the corresponding densities of $V_n(\Z_p)_{a_1 = b_1}$, and we also simplify the contribution from $\chi \in \widehat{G_p}$ with $\chi(y^2 + 1) \neq 1$.
Second, we evaluate the contributions from $\chi \in \widehat{G_p}$ with $\chi(y^2 + 1) \neq 1$ using a general formula.
The first step is given by the following theorem.

\begin{theorem}\label{thm-sub2:step1}
If $p = 2$, then
\begin{align}
    P^{\max}(V_n(\Z_2)_{a_1 = b_1, a_2 = b_2}) &= P^{\max}(V_n(\Z_2)_{a_1 = b_1}), \label{eq-sub2:step1-max-two} \\
    P_0^{\sqf}(V_n(\Z_2)_{a_1 = b_1, a_2 = b_2}) &= P_0^{\sqf}(V_n(\Z_2)_{a_1 = b_1}). \label{eq-sub2:step1-sqf-0-two}
\end{align}
If $p$ is odd, then
\begin{align}
    P^{\max}(V_n(\Z_p)_{a_1 = b_1, a_2 = b_2}) &= P^{\max}(V_n(\Z_p)_{a_1 = b_1}) + \frac{(-1)^{\lfloor n/2 \rfloor}}{p^{n + \lfloor n/2 \rfloor}} \sum_{\substack{\chi \in \widehat{G_p} \\ \chi(y^2 + 1) \neq 1}} \chi(\gamma_0)^{-1} C_{\chi}^{n - 2 \lfloor n/2 \rfloor} C_{\chi^2}^{\lfloor n/2 \rfloor}, \label{eq-sub2:step1-max-odd} \\
    P_0^{\sqf}(V_n(\Z_p)_{a_1 = b_1, a_2 = b_2}) &= P_0^{\sqf}(V_n(\Z_p)_{a_1 = b_1}) + \frac{(-1)^{\lfloor n/2 \rfloor}}{p^n} \sum_{\substack{\chi \in \widehat{G_p} \\ \chi(y^2 + 1) \neq 1}} \chi(\gamma_0)^{-1} C_{\chi}^{n - 2 \lfloor n/2 \rfloor} C_{\chi^2}^{\lfloor n/2 \rfloor}, \label{eq-sub2:step1-sqf-0-odd} \\
    P_1^{\sqf}(V_n(\Z_p)_{a_1 = b_1, a_2 = b_2}) &= P_1^{\sqf}(V_n(\Z_p)_{a_1 = b_1}) \notag \\
        &\;+ \frac{(-1)^n (1 - 1/p)}{p^n} \sum_{k = 0}^{n - 2} (-1)^{\lceil k/2 \rceil} \sum_{\substack{\chi \in \widehat{G_p} \\ \chi(y^2 + 1) \neq 1}} \chi(\gamma_0)^{-1} C_{\chi^{n - k}} C_{\chi}^{k - 2 \lfloor k/2 \rfloor} C_{\chi^2}^{\lfloor k/2 \rfloor}. \label{eq-sub2:step1-sqf-1-odd}
\end{align}
\end{theorem}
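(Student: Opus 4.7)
The plan is to feed the $V_n(\Z_p)_{a_1 \equiv b_1, a_2 \equiv b_2 \bmod{p^2}}$-admissible triple $(G_p, \varphi_2, w)$ into the general density formulas of Theorem~\ref{thm-density:sqf-0}, Theorem~\ref{thm-density:sqf-1}, and Theorem~\ref{thm-density:max}, and then split the resulting sum over $\chi \in \widehat{G_p}$ according to whether $\chi(y^2 + 1) = 1$. The first piece should reproduce the corresponding density of $V_n(\Z_p)_{a_1 = b_1}$ from Section~\ref{section:sub1}, while the second piece should, after simplification, produce the explicit sums appearing in \eqref{eq-sub2:step1-max-odd}--\eqref{eq-sub2:step1-sqf-1-odd}.

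For the first piece, the subgroup $\langle y^2 + 1 \rangle \subseteq G_p$ equals $\{1 + cy^2 : c \in \F_p\}$, since $(1 + y^2)^c \equiv 1 + cy^2 \pmod{y^3}$, and the quotient $G_p / \langle y^2 + 1 \rangle$ is canonically identified with $\F_p$ via $1 + ay \mapsto a$. The characters with $\chi(y^2+1) = 1$ are precisely those factoring through this quotient, giving a bijection with $\widehat{\F_p}$. Using the factorization $\gamma_0 = (\overline{b_1} y + 1)(1 + \overline{b_2} y^2)$ in $G_p$, such a $\chi$ satisfies $\chi(\gamma_0) = \chi'(\overline{b_1})$ and $\chi \circ \varphi_2 = \chi' \circ \varphi_1$, where $\chi' \in \widehat{\F_p}$ is the induced character. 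Comparing term by term with \eqref{eq-sub1:max-init}--\eqref{eq-sub1:sqf-1-init}, this sub-sum equals the corresponding density of $V_n(\Z_p)_{a_1 = b_1}$.

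For the second piece, I would apply Lemma~\ref{lem-sub2:char-gen} to get $L_{\chi \circ \varphi_2}(T) = 1 + C_\chi T$ whenever $\chi \neq 1$. When $p$ is odd, $|G_p| = p^2$ is odd, so $\chi \neq 1$ forces $\chi^2 \neq 1$ as well, and likewise $L_{\chi^2 \circ \varphi_2}(T) = 1 + C_{\chi^2} T$. Expanding geometrically, the $T^n$-coefficient of $(1 + C_\chi T) \sum_{l \geq 0} (-C_{\chi^2})^l T^{2l}$ collapses to the single term $(-1)^{\lfloor n/2 \rfloor} C_\chi^{n - 2 \lfloor n/2 \rfloor} C_{\chi^2}^{\lfloor n/2 \rfloor}$, and similarly with $C_{\chi^2}/p$ in place of $C_{\chi^2}$ for the maximality formula; multiplying by $\hat{w}(\chi) = \chi(\gamma_0)^{-1}/p^n$ and summing gives \eqref{eq-sub2:step1-max-odd} and \eqref{eq-sub2:step1-sqf-0-odd}. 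For \eqref{eq-sub2:step1-sqf-1-odd}, the computation is similar but bulkier: writing $\sum_{c \in \F_p} \chi(cy+1)^2/(1 + \chi(cy+1) T) = \sum_{j \geq 0} (-1)^j C_{\chi^{j+2}} T^j$, multiplying by $(1 + C_\chi T) \sum_l (-C_{\chi^2})^l T^{2l}$, and reading off the $T^{n-2}$-coefficient, I would reindex via $k = n - 2 - j$ (with a binary parameter $\epsilon \in \{0,1\}$ for which factor of $1 + C_\chi T$ contributes) to match the shape of the sum, using $\sum_c \chi(cy+1)^{n-k} = C_{\chi^{n-k}}$. This yields the expression $(-1)^n \sum_{k=0}^{n-2} (-1)^{\lceil k/2 \rceil} C_{\chi^{n-k}} C_\chi^{k \bmod 2} C_{\chi^2}^{\lfloor k/2 \rfloor}$, as required.

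Finally, for $p = 2$: the equality $P_1^{\sqf}(V_n(\Z_2)_{a_1 = b_1, a_2 = b_2}) = 0$ is immediate from Lemma~\ref{lem:sqfree-criterion}. For \eqref{eq-sub2:step1-max-two} and \eqref{eq-sub2:step1-sqf-0-two} I expect the contribution from $\chi$ with $\chi(y^2+1) \neq 1$ to vanish: $G_2$ is cyclic of order $4$ generated by $y+1$, since $(y+1)^2 = y^2 + 1$ and $(y+1)^4 = 1$ in $G_2$, so any such $\chi$ has $\chi(y+1) \in \{\pm i\}$, whence $\chi^2$ is the unique character of order $2$ and $C_{\chi^2} = \chi^2(1) + \chi^2(y+1) = 1 + (-1) = 0$. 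Consequently both $L_{\chi^2 \circ \varphi_2}(T^2)$ and $L_{\chi^2 \circ \varphi_2}(T^2/2)$ collapse to $1$, leaving the ratio $1 + C_\chi T$, whose $T^n$-coefficient vanishes for $n \geq 2$. The main delicate step will be the clean reindexing in the $P_1^{\sqf}$ calculation that matches the sign conventions of \eqref{eq-sub2:step1-sqf-1-odd}; the remaining manipulations are routine once Lemma~\ref{lem-sub2:char-gen} is in hand.
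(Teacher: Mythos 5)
Your proposal is correct and follows essentially the same route as the paper: split the character sum over $\widehat{G_p}$ according to whether $\chi(y^2+1)=1$, identify the characters killing $\langle y^2+1\rangle$ with $\widehat{\F_p}$ pulled back along $\tau\circ\varphi_2=\varphi_1$ to recover the $V_n(\Z_p)_{a_1=b_1}$ densities, and evaluate the remaining terms via $L_{\chi\circ\varphi_2}(T)=1+C_\chi T$ (including the observation that $C_{\chi^2}=0$ kills the extra terms when $p=2$, and the Cauchy-product reindexing with $k+\lfloor k/2\rfloor\equiv\lceil k/2\rceil\pmod 2$ for the $P_1^{\sqf}$ case). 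No gaps.
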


Theorem~\ref{thm-sub2:main} follows from Theorem~\ref{thm-sub2:step1} and the following general formula.

\begin{theorem}\label{thm-sub2:step2}
Suppose that $p$ is odd.
Fix some $a_1, a_2 \in \F_p$, and write $\gamma = a_2 y^2 + a_1 y + 1 \in G_p$.
Let $m_1, m_2, \ldots, m_k$ be non-negative integers, and suppose that exactly $t \leq k$ of them are divisible by $p$.
Denote $M_{\sigma} = \sum_j m_j$ and $M_{\pi} = \prod_{j, p \nmid m_j} m_j$; the product runs over indices $j$ with $p \nmid m_j$.
Then \[ \sum_{\substack{\chi \in \widehat{G_p} \\ \chi(y^2 + 1) \neq 1}} \chi(\gamma)^{-1} \prod_{j = 1}^k C_{\chi^{m_j}} =
    \begin{cases}
        0 & \text{if } p \mid M_{\sigma} \text{ and } a_1 \neq 0, \\
        \left(\dfrac{M_{\pi}}{p}\right) p^t \delta_{k - t + 1, p}(2 a_2) & \text{if } p \mid M_{\sigma} \text{ and } a_1 = 0, \\
        \left(\dfrac{M_{\sigma} M_{\pi}}{p}\right) p^t \delta_{k - t, p}(2 a_2 - (1 - 1/\overline{M_{\sigma}}) a_1^2) & \text{if } p \nmid M_{\sigma},
    \end{cases} \]
    where $\delta_{n, p}$ is as defined in~\eqref{eq-sub2:delta}.
\end{theorem}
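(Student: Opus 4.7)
The plan is to make the characters of $G_p$ fully explicit and then reduce the sum to two successive classical Gauss-sum evaluations. Since $p$ is odd, the map $1 + a_1 y + a_2 y^2 \mapsto (a_1, a_2 - a_1^2/2)$ is a group isomorphism $G_p \simeq (\F_p, +)^2$; the compatibility with the multiplication in $G_p = 1 + y\,\F_p[y]/(y^3)$ follows from $(a_2 + b_2 + a_1 b_1) - (a_1 + b_1)^2/2 = (a_2 - a_1^2/2) + (b_2 - b_1^2/2)$. I therefore parametrize $\widehat{G_p}$ by $(m_1, m_2) \in \F_p^2$ via $\chi_{m_1, m_2}(1 + a_1 y + a_2 y^2) = e^{2\pi i(m_1 a_1 + m_2(a_2 - a_1^2/2))/p}$. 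The condition $\chi(y^2 + 1) \neq 1$ becomes $m_2 \neq 0$, and $\chi_{m_1, m_2}^{m_j}$ corresponds to $(m_j m_1, m_j m_2)$.

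The key auxiliary computation is $C_{\chi_{m_1, m_2}} = \sum_{c \in \F_p} e^{2\pi i(m_1 c - m_2 c^2/2)/p}$. When $m_2 = 0$ it equals $p$ if $m_1 = 0$ and vanishes otherwise; when $m_2 \neq 0$, completing the square in $c$ yields
\[ C_{\chi_{m_1, m_2}} = e^{2\pi i\, m_1^2/(2 m_2 p)} \cdot \left(\frac{-m_2/2}{p}\right) g, \]
where $g = \sum_c e^{2\pi i c^2/p}$ is the quadratic Gauss sum, satisfying $g^2 = (-1/p)\, p$. In the product $\prod_j C_{\chi^{m_j}}$ the $t$ factors with $p \mid m_j$ each contribute $p$, while the $k - t$ factors with $p \nmid m_j$ collectively contribute a phase $e^{2\pi i M_{\sigma} m_1^2/(2 m_2 p)}$ (since $\sum_{p \nmid m_j} m_j \equiv M_{\sigma} \pmod{p}$), a Legendre factor $(-2/p)^{k-t}(m_2/p)^{k-t}(M_{\pi}/p)$, and the $g$-power $g^{k-t}$.

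Combined with $\chi_{m_1, m_2}(\gamma)^{-1} = e^{-2\pi i(m_1 a_1 + m_2(a_2 - a_1^2/2))/p}$, the full sum $S$ reorganizes as a double sum over $m_1 \in \F_p$ and $m_2 \in \F_p^{\times}$ whose exponent is quadratic in $m_1$ with leading coefficient $M_{\sigma}/(2 m_2)$ and linear coefficient $-a_1$. The inner $m_1$-sum exactly produces the three cases of the theorem: if $p \mid M_{\sigma}$ and $a_1 \neq 0$ the linear exponential sum vanishes, giving $S = 0$; if $p \mid M_{\sigma}$ and $a_1 = 0$ it equals $p$; and if $p \nmid M_{\sigma}$, completing the square in $m_1$ yields a further quadratic Gauss sum equal to $(2 M_{\sigma} m_2/p)\, g \cdot e^{-2\pi i a_1^2 m_2/(2 M_{\sigma} p)}$. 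In each surviving case the remaining $m_2$-sum reduces to
\[ \sum_{m_2 \in \F_p^{\times}} \left(\frac{m_2}{p}\right)^{r} e^{-2\pi i m_2 B/(2p)}, \]
with $r \in \{k - t, k - t + 1\}$ and $B \in \{2 a_2,\ 2 a_2 - (1 - 1/\overline{M_{\sigma}}) a_1^2\}$ according to the case. For $r$ even this is $p - 1$ when $B = 0$ and $-1$ otherwise; for $r$ odd it equals the Gauss sum $(\alpha/p)\, g$ at $\alpha = -B/2$, which is $0$ at $B = 0$ and $(-B/2/p)\,g$ elsewhere.

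The three outputs above align one-to-one with the piecewise branches of $\delta_{n, p}$. Collecting the accumulated $g$-powers via $g^2 = (-1/p)\, p$ and simplifying the Legendre prefactors using $(-2/p)(2/p) = (-1/p)$, cases (ii) and (iii) collapse respectively to $(M_{\pi}/p)\, p^t\, \delta_{k - t + 1, p}(2 a_2)$ and $(M_{\sigma} M_{\pi}/p)\, p^t\, \delta_{k - t, p}(2 a_2 - (1 - 1/\overline{M_{\sigma}}) a_1^2)$. The one-step shift in the index of $\delta$ between these two cases precisely records the extra factor of $g$ contributed by the $m_1$-Gauss sum in case (iii). The main obstacle is purely bookkeeping — tracking signs, Legendre symbols, and powers of $g$ through two successive Gauss-sum evaluations — but the three-way split of the inner $m_1$-sum mirrors the piecewise structure of $\delta_{n, p}$ so closely that the matching is essentially mechanical once the character parametrization above is in place.
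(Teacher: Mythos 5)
Your proposal is correct and follows essentially the same route as the paper: your isomorphism $1+a_1y+a_2y^2 \mapsto (a_1,\,a_2-a_1^2/2)$ is exactly the paper's parametrization of characters by their values at $y^2/2+y+1$ and $y^2+1$, your formula for $C_{\chi_{m_1,m_2}}$ is the paper's Lemma~\ref{lem-sub2:C-chi-Gauss}, and the double sum with the three-way split on the inner $m_1$-sum followed by the final quadratic-character sum in $m_2$ is precisely the paper's computation (the paper packages that last sum as Proposition~\ref{prop-sub2:Missingno} and reduces to $t=0$ up front, but these are cosmetic differences). No gaps.
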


We note that when applying Theorem~\ref{thm-sub2:step2} to the formulas in Theorem~\ref{thm-sub2:step1}, we always have $\gamma = \gamma_0$, $M_{\sigma} = n$, and $t \in \{0, 1\}$, with $t = 1$ only when reducing~\eqref{eq-sub2:step1-sqf-1-odd} on the summation terms with $p \mid n - k$.

\subsection{Proof of Theorem~\ref{thm-sub2:step1}}

We first work towards extracting the densities of $V_n(\Z_p)_{a_1 = b_1}$ in the density formula for $V_n(\Z_p)_{a_1 = b_1, a_2 = b_2}$.
Recall the monoid homomorphism $\varphi_1 : \F_p[x]_m \to \F_p$ defined in~\eqref{eq:varphi1-def} by the formula $\varphi_1(1) = 0$ and
\[ \varphi_1(x^n + a_1 x^{n - 1} + \ldots + a_n) = a_1. \]
Let $\tau : G_p \to \F_p$ be unique group homomorphism satisfying $\tau \circ \varphi_2 = \varphi_1$, defined explicitly by
\[ \tau(a_2 y^2 + a_1 y + 1) = a_1. \]
The main property of $\tau$ we will use is the following.

\begin{proposition}\label{prop-sub2:tau-prop}
The map $\tau^* : \widehat{\F_p} \to \widehat{G_p}$ defined by $\rho \mapsto \rho \circ \tau$ is injective, and its image is precisely the set of characters $\chi \in \widehat{G_p}$ such that $\chi(y^2 + 1) = 1$.
\end{proposition}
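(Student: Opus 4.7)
The plan is to identify the image and kernel structure of $\tau$ and then read off both claims from standard facts about characters of quotient groups.

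First I would observe that $\tau : G_p \to \F_p$ is a surjective group homomorphism: the identity $(a_2 y^2 + a_1 y + 1)(b_2 y^2 + b_1 y + 1) \equiv 1 + (a_1 + b_1) y + (a_2 + b_2 + a_1 b_1) y^2 \pmod{y^3}$ shows that the $y$-coefficient is additive, and surjectivity is obvious. Its kernel is $K = \{1 + a_2 y^2 : a_2 \in \F_p\}$, a cyclic subgroup of order $p$.

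Next I would check that $K$ is generated by $y^2 + 1$. Using $y^4 \equiv 0 \pmod{y^3}$, the binomial expansion gives $(1 + y^2)^{a_2} \equiv 1 + a_2 y^2 \pmod{y^3}$, so $\langle y^2 + 1 \rangle = K$. Consequently, a character $\chi \in \widehat{G_p}$ is trivial on $K$ if and only if $\chi(y^2 + 1) = 1$.

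Now injectivity of $\tau^*$: if $\rho \circ \tau = \rho' \circ \tau$, then surjectivity of $\tau$ forces $\rho = \rho'$. For the image, $\chi \in \widehat{G_p}$ lies in the image of $\tau^*$ iff it factors through $\tau$ (equivalently, through $G_p/K \cong \F_p$), which by the first isomorphism theorem is equivalent to $\chi|_K = 1$, i.e.\ $\chi(y^2 + 1) = 1$ by the previous paragraph. Conversely, any such $\chi$ descends to a character $\rho$ of $G_p/K \cong \F_p$ with $\chi = \rho \circ \tau$. No step here is subtle; the only thing to be careful about is the computation $(1 + y^2)^{a_2} \equiv 1 + a_2 y^2 \pmod{y^3}$ identifying the generator of $\ker \tau$.
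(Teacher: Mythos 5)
Your proof is correct and follows essentially the same route as the paper: injectivity from surjectivity of $\tau$, and identification of the image via the standard fact that $\chi$ lies in the image of the dual map if and only if $\ker(\tau) \subseteq \ker(\chi)$, together with $\ker(\tau) = \langle y^2 + 1 \rangle$. You simply spell out the kernel computation and the verification that $y^2+1$ generates it, which the paper states without proof.
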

\begin{proof}
Since $\tau^*$ is the induced dual map of $\tau$, injectivity of $\tau^*$ follows from the fact that $\tau$ is surjective.
The image of $\tau^*$ is precisely the set of $\chi \in \widehat{G_p}$ such that $\ker(\tau) \subseteq \ker(\chi)$,
where $\ker$ denotes the kernel.
The proposition follows since $\ker(\tau) = \langle y^2 + 1 \rangle$.
\end{proof}

We now deduce the desired formulas for $P^{\max}(V_n(\Z_p)_{a_1 = b_1, a_2 = b_2})$ and $P_0^{\sqf}(V_n(\Z_p)_{a_1 = b_1, a_2 = b_2})$.
Using the formula~\ref{eq-sub2:weightFT} for $\hat{w}$, Theorem~\ref{thm-density:max} and Theorem~\ref{thm-density:sqf-0} implies
\begin{align}
    P^{\max}(V_n(\Z_p)_{a_1 = b_1, a_2 = b_2}) &= \frac{1}{p^n} \sum_{\chi \in \widehat{G_p}} \chi(\gamma_0)^{-1} \left[\frac{L_{\chi \circ \varphi_2}(T)}{L_{\chi^2 \circ \varphi_2}(T^2/p)}\right]_n, \label{eq-sub2:max-init} \\
    P_0^{\sqf}(V_n(\Z_p)_{a_1 = b_1, a_2 = b_2}) &= \frac{1}{p^n} \sum_{\chi \in \widehat{G_p}} \chi(\gamma_0)^{-1} \left[\frac{L_{\chi \circ \varphi_2}(T)}{L_{\chi^2 \circ \varphi_2}(T^2)}\right]_n. \label{eq-sub2:sqf-0-init}
\end{align}
For any $z \in \C$, by Proposition~\ref{prop-sub2:tau-prop} and the equality $\tau \circ \varphi_2 = \varphi_1$, we have
\[ \frac{1}{p^n} \sum_{\substack{\chi \in \widehat{G_p} \\ \chi(y^2 + 1) = 1}} \chi(\gamma_0)^{-1} \left[\frac{L_{\chi \circ \varphi_2}(T)}{L_{\chi^2 \circ \varphi_2}(zT^2)}\right]_n = \frac{1}{p^n} \sum_{\rho \in \widehat{\F_p}} \rho(\overline{b_1})^{-1} \left[\frac{L_{\rho \circ \varphi_1}(T)}{L_{\rho^2 \circ \varphi_1}(T^2/p)}\right]_n. \]
Combining this equality with~\eqref{eq-sub1:max-init} and~\eqref{eq-sub1:sqf-0-init}, the contribution to the sums in~\eqref{eq-sub2:max-init} and~\eqref{eq-sub2:sqf-0-init} coming from terms $\chi \in \widehat{G_p}$ with $\chi(y^2 + 1) = 1$ are equal to $P^{\max}(V_n(\Z_p)_{a_1 = b_1})$ and $P_0^{\sqf}(V_n(\Z_p)_{a_1 = b_1})$, respectively.
It remains to count the contribution coming from terms $\chi \in \widehat{G_p}$ with $\chi(y^2 + 1) \neq 1$.
By the following proposition,~\eqref{eq-sub2:max-init} implies~\eqref{eq-sub2:step1-max-two} for $p = 2$ and~\eqref{eq-sub2:step1-max-odd} for $p$ odd, while~\eqref{eq-sub2:sqf-0-init} implies~\eqref{eq-sub2:step1-sqf-0-two} for $p = 2$ and~\eqref{eq-sub2:step1-sqf-0-odd} for $p$ odd.

\begin{proposition}\label{prop-sub2:char-gen2}
For any integer $n \geq 0$, $\chi \in \widehat{G_p}$ with $\chi^2 \neq 1$, and $z \in \C$,
\[ \left[\frac{L_{\chi \circ \varphi_2}(T)}{L_{\chi^2 \circ \varphi_2}(zT^2)}\right]_n = (-z)^{\lfloor n/2 \rfloor} C_{\chi}^{n - 2 \lfloor n/2 \rfloor} C_{\chi^2}^{\lfloor n/2 \rfloor}. \]
If $p = 2$ and $n \geq 2$, then both sides are zero.
\end{proposition}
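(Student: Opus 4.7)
The plan is to apply Lemma~\ref{lem-sub2:char-gen} to both $\chi$ and $\chi^2$ (the hypothesis $\chi^2 \neq 1$ forces $\chi \neq 1$) and then read off the $T^n$-coefficient of an explicit rational function. Lemma~\ref{lem-sub2:char-gen} gives $L_{\chi \circ \varphi_2}(T) = 1 + C_{\chi} T$ and $L_{\chi^2 \circ \varphi_2}(T) = 1 + C_{\chi^2} T$, so the ratio in question equals $(1 + C_{\chi} T)/(1 + C_{\chi^2} z T^2)$. Expanding via geometric series,
\[ \frac{1 + C_{\chi} T}{1 + C_{\chi^2} z T^2} = (1 + C_{\chi} T) \sum_{k \geq 0} (-C_{\chi^2} z)^k T^{2k}. \]
Splitting on the parity of $n$, the $T^n$-coefficient is $(-z)^{n/2} C_{\chi^2}^{n/2}$ when $n$ is even and $C_{\chi} (-z)^{(n-1)/2} C_{\chi^2}^{(n-1)/2}$ when $n$ is odd, both of which match $(-z)^{\lfloor n/2 \rfloor} C_{\chi}^{n - 2 \lfloor n/2 \rfloor} C_{\chi^2}^{\lfloor n/2 \rfloor}$.

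For the $p=2$ addendum, the key observation is that $C_{\chi^2} = 0$ whenever $\chi^2 \neq 1$ in $\widehat{G_2}$. Indeed, $C_{\chi^2} = \chi^2(1) + \chi^2(1 + y) = 1 + \chi(1 + y^2)$ using $(1+y)^2 = 1 + y^2$ in $G_2 = 1 + y \F_2[y]/(y^3)$. Since $(1 + y^2)^2 = 1 + y^4 = 1$ in $G_2$, the value $\chi(1 + y^2)$ is a square root of unity, so it is $\pm 1$; the hypothesis $\chi^2 \neq 1$ rules out $+1$, leaving $\chi(1+y^2) = -1$ and hence $C_{\chi^2} = 0$. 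For $n \geq 2$ we have $\lfloor n/2 \rfloor \geq 1$, so the claimed formula $(-z)^{\lfloor n/2 \rfloor} C_{\chi}^{n - 2 \lfloor n/2 \rfloor} C_{\chi^2}^{\lfloor n/2 \rfloor}$ vanishes, and correspondingly the ratio collapses to $(1 + C_\chi T)/1 = 1 + C_\chi T$, whose $T^n$-coefficient is zero for $n \geq 2$.

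There is no genuine obstacle here beyond bookkeeping: Lemma~\ref{lem-sub2:char-gen} already reduces the generating series to linear polynomials, after which the proposition is a one-line power-series expansion, and the $p=2$ case reduces to the elementary computation of $C_{\chi^2}$ above.
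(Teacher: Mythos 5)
Your proof is correct and follows essentially the same route as the paper: apply Lemma~\ref{lem-sub2:char-gen} to both $\chi$ and $\chi^2$ (valid since $\chi^2 \neq 1$ forces $\chi \neq 1$), expand the resulting ratio of linear polynomials, and for $p = 2$ show $C_{\chi^2} = 1 + \chi(y+1)^2 = 0$ using that $y+1$ generates the cyclic group $G_2$ of order $4$. No gaps.
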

\begin{proof}
The equality is immediate from Lemma~\ref{lem-sub2:char-gen},
which implies $L_{\chi \circ \varphi_2}(T) = 1 + C_{\chi} T$ and $L_{\chi^2 \circ \varphi_2}(zT^2) = 1 + C_{\chi^2} zT^2$.
For the case $p = 2$, it suffices to show that $C_{\chi^2} = 0$.
By definition, we have $C_{\chi^2} = 1 + \chi(y + 1)^2$, so it suffices to prove that $\chi(y + 1)^2 = -1$.
Since $\# \widehat{G_2} = \# G_2 = 4$, it is clear that $\chi(y + 1)^4 = 1$, so $\chi(y + 1)^2 = \pm 1$.
However, $y + 1$ generates $G_2$, so $\chi(y + 1)^2 = 1$ would imply $\chi^2 = 1$; contradiction.
\end{proof}

It remains to prove the formula~\eqref{eq-sub2:step1-sqf-1-odd} for $p$ odd.
Using the formula~\ref{eq-sub2:weightFT} for $\hat{w}$, Theorem~\ref{thm-density:sqf-1} implies
\[ P_1^{\sqf}(V_n(\Z_p)_{a_1 = b_1, a_2 = b_2}) = \frac{1 - 1/p}{p^n} \sum_{\chi \in \widehat{G_p}} \chi(\gamma_0)^{-1} \left[\sum_{c \in \F_p} \frac{\chi(\varphi_2(x + c))^2}{1 + \chi(\varphi_2(x + c)) \, T} \cdot \frac{L_{\chi \circ \varphi_2}(T)}{L_{\chi^2 \circ \varphi_2}(T^2)}\right]_{n - 2}. \]
Applying Proposition~\ref{prop-sub2:tau-prop} and the equality $\tau \circ \varphi_2 = \varphi_1$, we get
\begin{align*}
    P_1^{\sqf}(V_n(\Z_p)_{a_1 = b_1, a_2 = b_2})
    &= \frac{1 - 1/p}{p^n} \sum_{\rho \in \widehat{\F_p}} \rho(\overline{b_1})^{-1} \left[\sum_{c \in \F_p} \frac{\rho(\varphi_1(x + c))^2}{1 + \rho(\varphi_1(x + c)) \, T} \cdot \frac{L_{\rho \circ \varphi_1}(T)}{L_{\rho^2 \circ \varphi_1}(T^2)}\right]_{n - 2} \\
    &\qquad+ \frac{1 - 1/p}{p^n} \sum_{\substack{\chi \in \widehat{G_p} \\ \chi(y^2 + 1) \neq 1}} \chi(\gamma_0)^{-1} \left[\sum_{c \in \F_p} \frac{\chi(\varphi_2(x + c))^2}{1 + \chi(\varphi_2(x + c)) \, T} \cdot \frac{L_{\chi \circ \varphi_2}(T)}{L_{\chi^2 \circ \varphi_2}(T^2)}\right]_{n - 2}.
\end{align*}
The first term is equal to $P_1^{\sqf}(V_n(\Z_p)_{a_1 = b_1})$ by~\eqref{eq-sub1:sqf-1-init}, so
\begin{align*}
    & P_1^{\sqf}(V_n(\Z_p)_{a_1 = b_1, a_2 = b_2}) - P_1^{\sqf}(V_n(\Z_p)_{a_1 = b_1}) \\
    =\;& \frac{1 - 1/p}{p^n} \sum_{\substack{\chi \in \widehat{G_p} \\ \chi(y^2 + 1) \neq 1}} \chi(\gamma_0)^{-1} \left[\sum_{c \in \F_p} \frac{\chi(cy + 1)^2}{1 + \chi(cy + 1) \, T} \cdot \frac{L_{\chi \circ \varphi_2}(T)}{L_{\chi^2 \circ \varphi_2}(T^2)}\right]_{n - 2} \\
    =\;& \frac{1 - 1/p}{p^n} \sum_{\substack{\chi \in \widehat{G_p} \\ \chi(y^2 + 1) \neq 1}} \chi(\gamma_0)^{-1} \sum_{k = 0}^{n - 2} \left[\sum_{c \in \F_p} \frac{\chi(cy + 1)^2}{1 + \chi(cy + 1) \, T}\right]_{n - k - 2} \left[\frac{L_{\chi \circ \varphi_2}(T)}{L_{\chi^2 \circ \varphi_2}(T^2)}\right]_k \\
    =\;& \frac{1 - 1/p}{p^n} \sum_{\substack{\chi \in \widehat{G_p} \\ \chi(y^2 + 1) \neq 1}} \chi(\gamma_0)^{-1} \sum_{k = 0}^{n - 2} (-1)^{n - k - 2} \sum_{c \in \F_p} \chi(cy + 1)^{n - k} \left[\frac{L_{\chi \circ \varphi_2}(T)}{L_{\chi^2 \circ \varphi_2}(T^2)}\right]_k \\
    =\;& \frac{(-1)^n (1 - 1/p)}{p^n} \sum_{\substack{\chi \in \widehat{G_p} \\ \chi(y^2 + 1) \neq 1}} \sum_{k = 0}^{n - 2} (-1)^k \chi(\gamma_0)^{-1} C_{\chi^{n - k}} \left[\frac{L_{\chi \circ \varphi_2}(T)}{L_{\chi^2 \circ \varphi_2}(T^2)}\right]_k.
\end{align*}
Finally, by Proposition~\ref{prop-sub2:char-gen2}, for each $k \geq 0$, we have
\[ (-1)^k \left[\frac{L_{\chi \circ \varphi_2}(T)}{L_{\chi^2 \circ \varphi_2}(T^2)}\right]_k = (-1)^{k + \lfloor k/2 \rfloor} C_{\chi}^{k - 2 \lfloor k/2 \rfloor} C_{\chi^2}^{\lfloor k/2 \rfloor} = (-1)^{\lceil k/2 \rceil} C_{\chi}^{k - 2 \lfloor k/2 \rfloor} C_{\chi^2}^{\lfloor k/2 \rfloor}. \]
Subsituting into the previous equation yields the desired equality~\eqref{eq-sub2:step1-sqf-1-odd}.

\subsection{Proof of Theorem~\ref{thm-sub2:step2}}

Since removing the integers $m_j$ that are divisible by $p$ do not change $M_{\pi}$ and $M_{\sigma}$ modulo $p$ and since $C_1 = p$, we may assume without loss of generality that $t = 0$; that is, each $m_j$ is not divisible by $p$.
Define the function $e_p : \F_p \to \C$ by the formula
\[ e_p(c) = e^{2 \pi i \tilde{c}/p}, \]
    where $\tilde{c}$ is an arbitrary integer lift of $c$.
For any $b \in \F_p$, we denote
\[ \cG(p; b) = \sum_{c \in \F_p} e_p(bc^2). \]
This is the quadratic Gauss sum, and the exact formula for $\cG(p; b)$ is well-known; for example, see~\cite[Section 3.5]{IK2004}.
However, we will only need the formulas
\[ \cG(p; b) = \left(\frac{b}{p}\right) \cG(p; 1), \qquad \cG(p; b)^2 = \left(\frac{-1}{p}\right) p, \qquad \sum_{c \in \F_p} \left(\frac{c}{p}\right) e_p(cb) = \left(\frac{b}{p}\right) \cG(p; 1), \]
    which hold as long as $b \neq 0$; the last equality holds even if $b = 0$.
We first prove the following small but useful formula.

\begin{lemma}\label{lem-sub2:Gauss}
For any $\alpha, \beta \in \F_p$ with $\alpha \neq 0$,
\[ \sum_{c \in \F_p} e_p(\alpha c^2 + \beta c) = e_p\left(-\frac{\beta^2}{4 \alpha}\right) \cG(p; \alpha). \]
\end{lemma}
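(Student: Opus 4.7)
The plan is to prove this by completing the square inside the exponent and using the translation invariance of the summation over $\F_p$. Since $p$ is odd (the definition of $\cG(p;\alpha)$ is being used in a context where $\alpha \neq 0$ and the discriminant contains $4\alpha$ in the denominator, so $2 \in \F_p^{\times}$), the element $\beta/(2\alpha) \in \F_p$ makes sense.

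Concretely, I would first write
\[ \alpha c^2 + \beta c = \alpha \left(c + \frac{\beta}{2\alpha}\right)^2 - \frac{\beta^2}{4\alpha} \quad \text{in } \F_p, \]
which is the standard completing-the-square identity valid whenever $2\alpha \in \F_p^{\times}$. Applying $e_p$ and using multiplicativity of $e_p$ (i.e.\ $e_p(x+y) = e_p(x) e_p(y)$, which is immediate from the definition $e_p(c) = e^{2\pi i \tilde{c}/p}$ and the fact that the value depends only on $c \bmod p$), the sum factors as
\[ \sum_{c \in \F_p} e_p(\alpha c^2 + \beta c) = e_p\!\left(-\frac{\beta^2}{4\alpha}\right) \sum_{c \in \F_p} e_p\!\left(\alpha \left(c + \frac{\beta}{2\alpha}\right)^2\right). \]

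Next I would observe that as $c$ ranges over $\F_p$, so does $c' = c + \beta/(2\alpha)$, so the inner sum equals $\sum_{c' \in \F_p} e_p(\alpha (c')^2) = \cG(p; \alpha)$ by definition. Combining these yields the claimed identity. There is no real obstacle here; the only minor point to be careful about is that $e_p$ and the expressions $\beta^2/(4\alpha)$ and $\beta/(2\alpha)$ are genuinely well-defined in $\F_p$, which holds because $p$ is odd and $\alpha \neq 0$. The whole argument is two or three lines and requires no further machinery.
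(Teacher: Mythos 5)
Your proof is correct and is essentially identical to the paper's: both complete the square, pull out the factor $e_p(-\beta^2/(4\alpha))$ using additivity of $e_p$, and then shift the summation variable to recover $\cG(p;\alpha)$. Your remark that $p$ must be odd for $\beta/(2\alpha)$ to make sense is a valid point that the paper leaves implicit (the lemma is only used in the proof of Theorem~\ref{thm-sub2:step2}, where $p$ is assumed odd).
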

\begin{proof}
Indeed, by completing the squares,
\[ \sum_{c \in \F_p} e_p(\alpha c^2 + \beta c) = e_p\left(-\frac{\beta^2}{4 \alpha}\right) \sum_{c \in \F_p} e_p\left(\alpha \left(c + \frac{\beta}{2 \alpha}\right)^2\right) = e_p\left(-\frac{\beta^2}{4 \alpha}\right) \sum_{c \in \F_p} e_p(\alpha c^2), \]
    and the lemma follows by the definition of $\cG(p; \alpha)$.
\end{proof}

As an implication, we obtain an explicit formula for $C_{\chi}$ when $\chi(y^2 + 1) \neq 0$.

\begin{lemma}\label{lem-sub2:C-chi-Gauss}
Fix some $\chi \in \widehat{G_p}$ such that $\chi(y^2 + 1) \neq 1$.
Write $\chi(y^2/2 + y + 1) = e_p(c_1)$ and $\chi(y^2 + 1) = e_p(c_2)$ for some $c_1, c_2 \in \F_p$ such that $c_2 \neq 0$.
Then \[ C_{\chi} = e_p\left(\frac{c_1^2}{2 c_2}\right) \cG(p; -c_2/2). \]
\end{lemma}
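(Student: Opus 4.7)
The plan is to reduce $\chi(cy + 1)$ to an explicit additive character value in $c$ and then evaluate $C_{\chi}$ using Lemma~\ref{lem-sub2:Gauss}. The key identity I will prove is
\[ cy + 1 \equiv \bigl(\tfrac{1}{2} y^2 + y + 1\bigr)^{c} \cdot (y^2 + 1)^{-c^2/2} \pmod{y^3} \]
in $G_p$ for every $c \in \F_p$, where the exponents are arbitrary integer lifts of $c$ and $-c^2/2 \in \F_p$. This is well-posed because $p$ is odd: a direct expansion shows that every element of $G_p = 1 + y \F_p[y]/(y^3)$ has order dividing $p$, so different integer lifts of the same $\F_p$-exponent produce the same power.

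To verify the identity, I will expand $\bigl(\tfrac{1}{2} y^2 + y + 1\bigr)^{c} = (1 + u)^{c}$ with $u = y + \tfrac{1}{2} y^2$ via the binomial theorem. Since $u^2 \equiv y^2 \pmod{y^3}$ and $u^k \equiv 0 \pmod{y^3}$ for $k \geq 3$, this reduces to $1 + cu + \binom{c}{2} u^2 \equiv 1 + cy + \tfrac{c^2}{2} y^2 \pmod{y^3}$. A similar expansion gives $(y^2 + 1)^{-c^2/2} \equiv 1 - \tfrac{c^2}{2} y^2 \pmod{y^3}$, and multiplying the two factors yields $1 + cy$, as claimed.

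Applying $\chi$ to the identity and using the hypothesized values of $\chi$ yields
\[ \chi(cy + 1) = e_p(c_1)^{c} \, e_p(c_2)^{-c^2/2} = e_p\!\left(c_1 c - \tfrac{c_2}{2} c^2\right). \]
Summing over $c \in \F_p$ and applying Lemma~\ref{lem-sub2:Gauss} with $\alpha = -c_2/2$ (which is nonzero since $c_2 \neq 0$) and $\beta = c_1$ then gives
\[ C_{\chi} = e_p\!\left(-\frac{c_1^2}{4(-c_2/2)}\right) \cG(p; -c_2/2) = e_p\!\left(\frac{c_1^2}{2 c_2}\right) \cG(p; -c_2/2), \]
which is the claimed formula.

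The main obstacle is the multiplicative identity in $G_p$ expressing $cy + 1$ as a product of powers of the two designated elements; this identity encodes the quadratic correction $-c^2/2$ coming from the formal logarithm $\log(1 + u) = u - u^2/2$ in $\F_p[y]/(y^3)$. Once this identity is in hand, the Gauss-sum evaluation is routine.
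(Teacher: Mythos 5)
Your proof is correct and takes essentially the same approach as the paper: the paper's proof also rests on the identity $cy + 1 = (y^2+1)^{-\tilde{c}^2/2}(y^2/2 + y + 1)^{\tilde{c}}$ in $G_p$, followed by applying $\chi$ and invoking Lemma~\ref{lem-sub2:Gauss} with $\alpha = -c_2/2$. Your binomial-expansion verification of that identity (which the paper asserts without detail) and the well-posedness remark about exponents in $\F_p$ are both accurate.
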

\begin{proof}
Note that for any $c \in \F_p$, we have $cy + 1 = (y^2 + 1)^{-\tilde{c}^2/2} (y^2/2 + y + 1)^{\tilde{c}}$.
Thus, by expanding, we get
\[ C_{\chi} = \sum_{c \in \F_p} \chi(cy + 1) = \sum_{c \in \F_p} e_p\left(-\frac{c_2}{2} c^2 + c_1 c\right). \]
The lemma now follows from Lemma~\ref{lem-sub2:Gauss}.
\end{proof}

We now prove Theorem~\ref{thm-sub2:step2}.
First note that each $\chi \in \widehat{G_p}$ is uniquely determined by its value at $y^2/2 + y + 1$ and $y^2 + 1$.
By Lemma~\ref{lem-sub2:C-chi-Gauss}, we have
\begin{align*}
    \sum_{\substack{\chi \in \widehat{G_p} \\ \chi(y^2 + 1) \neq 1}} \chi(\gamma)^{-1} \prod_{j = 1}^k C_{\chi^{m_j}}
    &= \sum_{\substack{c_1, c_2 \in \F_p \\ c_2 \neq 0}} e_p\left(-c_1 a_1 - c_2 \left(a_2 - \frac{a_1^2}{2}\right)\right) \prod_{j = 1}^k e_p\left(\frac{c_1^2 m_j}{2 c_2}\right) \cG(p; -m_j c_2/2) \\
    &= \sum_{\substack{c_1, c_2 \in \F_p \\ c_2 \neq 0}} e_p\left(\frac{c_1^2 M_{\sigma}}{2 c_2} -c_1 a_1 - c_2 \left(a_2 - \frac{a_1^2}{2}\right)\right) \prod_{j = 1}^k \cG(p; -m_j c_2/2).
\end{align*}
By applying the formula $\cG(p; bc) = \left(\frac{b}{p}\right) \cG(p; c)$, we get
\begin{equation}\label{eq-sub2:step2-init}
    \sum_{\substack{\chi \in \widehat{G_p} \\ \chi(y^2 + 1) \neq 1}} \chi(\gamma)^{-1} \prod_{j = 1}^k C_{\chi^{m_j}} = \left(\frac{M_{\pi}}{p}\right) \sum_{\substack{c_1, c_2 \in \F_p \\ c_2 \neq 0}} e_p\left(\frac{c_1^2 M_{\sigma}}{2 c_2} -c_1 a_1 - c_2 \left(a_2 - \frac{a_1^2}{2}\right)\right) \cG(p; -c_2/2)^k.
\end{equation}

If $p \mid M_{\sigma}$ and $a_1 \neq 0$, then summing just over $c_1$ yields that the right hand side is zero.
If $p \mid M_{\sigma}$ and $a_1 = 0$, then~\eqref{eq-sub2:step2-init} yields
\begin{align*}
    \sum_{\substack{\chi \in \widehat{G_p} \\ \chi(y^2 + 1) \neq 1}} \chi(\gamma)^{-1} \prod_{j = 1}^k C_{\chi^{m_j}} 
    &= \left(\frac{M_{\pi}}{p}\right) \sum_{\substack{c_1, c_2 \in \F_p \\ c_2 \neq 0}} e_p(-c_2 a_2) \, \cG(p; -c_2/2)^k \\
    &= \left(\frac{M_{\pi}}{p}\right) \sum_{c_2 \in \F_p \setminus \{0\}} e_p(-c_2 a_2) \, p \cG(p; -c_2/2)^k.
\end{align*}
Then the formula given in Theorem~\ref{thm-sub2:step2} follows from the following proposition.

\begin{proposition}\label{prop-sub2:Missingno}
For any $a, b \in \F_p$ with $b \neq 0$ and for any integer $n$, we have
\[ \sum_{c \in \F_p \setminus \{0\}} e_p(ca) \, p \cG(p; cb)^n = \delta_{n + 1, p}(ab), \]
    where $\delta_{n, p}$ is as defined in~\eqref{eq-sub2:delta}.
\end{proposition}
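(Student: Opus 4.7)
The plan is to prove Proposition~\ref{prop-sub2:Missingno} by direct computation, splitting on the parity of $n$ and applying the three stated properties of the quadratic Gauss sum $\cG(p; \cdot)$: namely $\cG(p; bc) = \left(\frac{b}{p}\right) \cG(p; c)$ for $bc \neq 0$, the evaluation $\cG(p; 1)^2 = \left(\frac{-1}{p}\right) p$, and the identity $\sum_{c \in \F_p} \left(\frac{c}{p}\right) e_p(cb) = \left(\frac{b}{p}\right) \cG(p; 1)$.

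First I would handle the case where $n$ is even. Since $c, b \neq 0$, the first Gauss sum identity gives $\cG(p; cb)^n = \cG(p; 1)^n = \left(\frac{-1}{p}\right)^{n/2} p^{n/2}$, which is independent of $c$. The sum thus reduces to $p^{n/2 + 1} \left(\frac{-1}{p}\right)^{n/2} \sum_{c \neq 0} e_p(ca)$. The inner sum equals $p - 1$ if $a = 0$ and equals $-1$ if $a \neq 0$ (by the orthogonality of additive characters). Since $b \neq 0$, the condition $a = 0$ is equivalent to $ab = 0$, and comparing with the odd case of $\delta_{n+1, p}$ (as $n + 1$ is odd) gives exactly the claimed value.

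Next I would handle the case where $n$ is odd. Writing $\cG(p; cb)^n = \left(\frac{cb}{p}\right) \cG(p; 1)^n = \left(\frac{cb}{p}\right) \left(\frac{-1}{p}\right)^{(n-1)/2} p^{(n-1)/2} \cG(p; 1)$, the sum becomes
\[ \left(\frac{b}{p}\right) \left(\frac{-1}{p}\right)^{(n-1)/2} p^{(n+1)/2} \cG(p; 1) \sum_{c \neq 0} \left(\frac{c}{p}\right) e_p(ca). \]
Since $\left(\frac{0}{p}\right) = 0$, the inner sum is the full character sum $\sum_{c \in \F_p} \left(\frac{c}{p}\right) e_p(ca) = \left(\frac{a}{p}\right) \cG(p; 1)$. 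Collecting the two factors of $\cG(p; 1)$ into $\cG(p; 1)^2 = \left(\frac{-1}{p}\right) p$ yields $\left(\frac{ab}{p}\right) \left(\frac{-1}{p}\right)^{(n+1)/2} p^{(n+3)/2}$, which matches the even case of $\delta_{n+1, p}(ab)$ (note $n + 1$ is even here, and the formula handles $ab = 0$ automatically since $\left(\frac{0}{p}\right) = 0$).

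This is almost entirely bookkeeping: there is no genuine obstacle, since the properties of Gauss sums quoted in the paper already do all the heavy lifting. The only mildly subtle point is the routine observation that extending the sum from $c \neq 0$ to all of $\F_p$ is harmless in the odd-$n$ case because of the vanishing $\left(\frac{0}{p}\right) = 0$, and that the even-$n$ formula for $\delta_{n+1, p}$ and the odd-$n$ formula for $\delta_{n+1, p}$ each correctly encode both the $ab = 0$ and $ab \neq 0$ subcases (via the Legendre symbol convention) so that no further case split beyond parity is required.
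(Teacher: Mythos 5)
Your proof is correct and follows essentially the same route as the paper: split on the parity of $n$, reduce the even case to the additive character sum $\sum_{c \neq 0} e_p(ca)$, and reduce the odd case to the twisted sum $\sum_{c} \left(\frac{c}{p}\right) e_p(ca)$ via the three quoted Gauss-sum identities. The only cosmetic difference is that the paper performs the substitution $c \mapsto c/b$ to obtain $\sum_c \left(\frac{c}{p}\right) e_p(cab)$ directly, whereas you factor out $\left(\frac{b}{p}\right)$ first; the resulting computations are identical.
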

\begin{proof}
If $n$ is even, then
\[ \sum_{c \in \F_p \setminus \{0\}} e_p(ca) \, p \cG(p; cb)^n = \sum_{c \in \F_p \setminus \{0\}} e_p(ca) \, \left(\frac{-1}{p}\right)^{n/2} p^{n/2 + 1}, \]
    which is equal to $\delta_{n + 1, p}(ab)$ since $n + 1$ is odd and $ab = 0$ if and only if $a = 0$.
If $n$ is odd, then we have
\[ \sum_{c \in \F_p \setminus \{0\}} e_p(ca) \, p \cG(p; cb)^n = \sum_{c \in \F_p \setminus \{0\}} e_p(ca) \left(\frac{cb}{p}\right) \cdot p \cG(p; 1)^n = \sum_{c \in \F_p \setminus \{0\}} e_p(cab) \left(\frac{c}{p}\right) \cdot p \cG(p; 1)^n. \]
The sum on the right hand side is also known as a quadratic Gauss sum, with value $\left(\frac{ab}{p}\right) \cG(p; 1)$, so
\[ \sum_{c \in \F_p \setminus \{0\}} e_p(ca) \, p \cG(p; cb)^n = \left(\frac{ab}{p}\right) \, p \cG(p; 1)^{n + 1} = \left(\frac{ab}{p}\right) \left(\frac{-1}{p}\right)^{(n + 1)/2} p^{(n + 1)/2 + 1}. \]
The right hand side is equal to $\delta_{n + 1, p}(ab)$, since $n + 1$ is even.
\end{proof}

It remains to solve the case $p \nmid M_{\sigma}$.
Applying Lemma~\ref{lem-sub2:C-chi-Gauss} to~\eqref{eq-sub2:step2-init} yields
\begin{align*}
    \sum_{\substack{\chi \in \widehat{G_p} \\ \chi(y^2 + 1) \neq 1}} \chi(\gamma)^{-1} \prod_{j = 1}^k C_{\chi^{m_j}}
    &= \left(\frac{M_{\pi}}{p}\right) \sum_{c_2 \in \F_p \setminus \{0\}} e_p\left(\frac{c_1^2 M_{\sigma}}{2 c_2} -c_1 a_1 - c_2 \left(a_2 - \frac{a_1^2}{2}\right)\right) \cG(p; -c_2/2)^k \\
    &= \left(\frac{M_{\pi}}{p}\right) \sum_{c_2 \in \F_p \setminus \{0\}} e_p\left(-\frac{2 c_2 a_1^2}{4 \overline{M_{\sigma}}} - c_2 \left(a_2 - \frac{a_1^2}{2}\right)\right) \cG(p; \overline{M_{\sigma}}/(2 c_2)) \cG(p; -c_2/2)^k \\
    &= \left(\frac{M_{\pi} M_{\sigma}}{p}\right) \sum_{c_2 \in \F_p \setminus \{0\}} e_p\left(-\frac{c_2}{2} (2a_2 - (1 - 1/\overline{M_{\sigma}}) a_1^2)\right) \, \left(\frac{-1}{p}\right) \cG(p; -c_2/2)^{k + 1} \\
    &= \left(\frac{M_{\pi} M_{\sigma}}{p}\right) \sum_{c_2 \in \F_p \setminus \{0\}} e_p\left(-\frac{c_2}{2} (2a_2 - (1 - 1/\overline{M_{\sigma}}) a_1^2)\right) \, p \cG(p; -c_2/2)^{k - 1}.
\end{align*}
By Proposition~\ref{prop-sub2:Missingno}, the right hand side is equal to $\left(\frac{M_{\pi} M_{\sigma}}{p}\right) \delta_{k, p}(2 a_2 - (1 - 1/\overline{M_{\sigma}}) a_1^2)$.
We are done.

\section{Density formula for polynomials with unit constant coefficient}\label{section:density-formula-unit}

Fix an integer $n \geq 2$ and a prime number $p$.
Recall that $V_n(\Z_p)$ is the set of degree $n$ monic polynomials over $\Z_p$ and $\lambda_p$ is the $p$-adic Haar measure on $V_n(\Z_p)$.
Let $\Sigma$ be a subset of $V_n(\Z_p)$ defined by congruence conditions mod $p^2$.
Recall that $P_0^{\sqf}(\Sigma)$ and $P_1^{\sqf}(\Sigma)$ denote the density of polynomials in $\Sigma$ whose discriminant has $p$-adic valuation $0$ and $1$, respectively, and $P^{\max}(\Sigma)$ denote the density of polynomials $f \in \Sigma$ such that $\Z_p[x]/(f(x))$ is the maximal order of $\Q_p[x]/(f(x))$.
In this section, we state and prove formulas for $P_0^{\sqf}(\Sigma)$, $P_1^{\sqf}(\Sigma)$, and $P^{\max}(\Sigma)$ under the assumption that $f(0) \in \Z_p^{\times}$ for every $f \in \Sigma$.
To do this, we modify the proof of the density formulas proved in \S\ref{section:density-formula}.

Let $\F_p[x]_{m, x \nmid u}$ denote the set of polynomials $u \in \F_p[x]_m$ such that $x \nmid u$.
For any non-negative integer $k < n$, recall that the $x^k$-coefficient of $\Sigma$ is \emph{defined by congruence conditions mod $p$} if for any $f \in \Sigma$ and $c \in \Z_p$ such that $p \mid c$, we have $f + cx^k \in \Sigma$.
Next, we need to modify the definition of $\Sigma$-admissible triples given in Definition~\ref{def:admissible}.

\begin{definition}\label{def:unit-admissible}
A \emph{$\Sigma$-unit-admissible triple} is a triple $(G, \psi, w)$, where $G$ is a finite abelian group, $\psi : \F_p[x]_{m, x \nmid u} \to G$ is a monoid homomorphism, and $w : G \to \C$ is a function such that for any $u \in \F_p[x]_{m, x \nmid u}$ of degree $n$,
\[ w(\psi(u)) = \frac{\lambda_p(\{f \in \Sigma : \overline{f} = u\})}{\lambda_p(\Sigma)}. \]
\end{definition}

The general formulas to be proved in this section are as follows.

\begin{theorem}\label{thm-density-unit:sqf-0}
Suppose that $f(0) \in \Z_p^{\times}$ for every $f \in \Sigma$.
Let $(G, \psi, w)$ be a $\Sigma$-unit-admissible triple.
Then \[ P_0^{\sqf}(\Sigma) = \sum_{\chi \in \widehat{G}} \hat{w}(\chi) \left[\frac{L_{\chi \circ \psi}(T)}{L_{\chi^2 \circ \psi}(T^2)}\right]_n. \]
\end{theorem}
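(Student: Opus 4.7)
The plan is to imitate the proof of Theorem~\ref{thm-density:sqf-0} almost verbatim, with the observation that the hypothesis $f(0) \in \Z_p^{\times}$ for every $f \in \Sigma$ forces $\overline{f}$ to lie in $\F_p[x]_{m, x \nmid u}$, so every sum indexed by monic reductions $u$ of elements of $\Sigma$ is automatically supported on $\F_p[x]_{m, x \nmid u}$, which is exactly the domain of $\psi$ in the unit-admissible triple.

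More concretely, I would first apply the squarefree criterion of Lemma~\ref{lem:sqfree-criterion} to write
\[ P_0^{\sqf}(\Sigma) = \sum_{\substack{u \in \F_p[x]_m \\ \deg(u) = n}} |\mu(u)| \; \frac{\lambda_p(\{f \in \Sigma : \overline{f} = u\})}{\lambda_p(\Sigma)}. \]
Since every $f \in \Sigma$ has unit constant coefficient, the summand vanishes unless $x \nmid u$, so the sum collapses to $u \in \F_p[x]_{m, x \nmid u}$ of degree $n$. At that point, $\Sigma$-unit-admissibility of $(G, \psi, w)$ lets me substitute $w(\psi(u))$ for the ratio of measures. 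I then expand $w$ in the Fourier basis of $G$, swap the sums, and recognize the inner sum over $u$ as the $n$-th coefficient of the generating series $L_{|\mu|(\chi \circ \psi)}(T)$, which under the notational convention of \S1.1 is the generating series summed over $\F_p[x]_{m, x \nmid u}$.

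The only point needing a sentence of justification (and the only place the proof substantively uses the new setup rather than the old one) is the Euler-factorization
\[ L_{|\mu|(\chi \circ \psi)}(T) = \frac{L_{\chi \circ \psi}(T)}{L_{\chi^2 \circ \psi}(T^2)}, \]
which follows because $\chi \circ \psi$ is completely multiplicative on the monoid $\F_p[x]_{m, x \nmid u}$ (this monoid is the free commutative monoid on the monic irreducibles of $\F_p[x]$ other than $x$, so the usual identity $\sum_u |\mu(u)| \rho(u) t^{\deg u} = \prod_{d^2 \nmid} = L_\rho(T)/L_{\rho^2}(T^2)$ for completely multiplicative $\rho$ carries over with no change). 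There is essentially no obstacle here: every step of the original proof of Theorem~\ref{thm-density:sqf-0} is valid after merely restricting both the index set of $u$ and the monoid on which the generating series is defined to $\F_p[x]_{m, x \nmid u}$.
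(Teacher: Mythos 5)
Your proposal is correct and matches the paper's proof, which simply states that Theorem~\ref{thm-density-unit:sqf-0} follows from the proof of Theorem~\ref{thm-density:sqf-0} with $\F_p[x]_m$ replaced by $\F_p[x]_{m, x \nmid u}$. You supply the two details the paper elides — that the unit constant coefficient forces the sum to collapse onto $x \nmid u$, and that the Euler factorization persists because $\F_p[x]_{m, x \nmid u}$ is still a free commutative monoid on irreducibles — and both are exactly right.
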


\begin{theorem}\label{thm-density-unit:sqf-1}
Suppose that $f(0) \in \Z_p^{\times}$ for every $f \in \Sigma$, and suppose that there exists $k < n$ such that the $x^k$-coefficient of $\Sigma$ is defined by congruence conditions mod $p$.
Let $(G, \psi, w)$ be a $\Sigma$-unit-admissible triple.
If $p = 2$, then $P_1^{\sqf}(\Sigma) = 0$, and if $p$ is odd, then
\[ P_1^{\sqf}(\Sigma) = \left(1 - \frac{1}{p}\right) \sum_{\chi \in \widehat{G}} \hat{w}(\chi) \left[\sum_{c \in \F_p^{\times}} \frac{\chi(\psi(x + c))^2}{1 + \chi(\psi(x + c)) \, T} \cdot \frac{L_{\chi \circ \psi}(T)}{L_{\chi^2 \circ \psi}(T^2)}\right]_{n - 2}. \]
\end{theorem}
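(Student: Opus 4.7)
The plan is to parallel the proof of Theorem~\ref{thm-density:sqf-1} with two changes forced by the unit constant coefficient hypothesis: since $\overline{f}(0) \neq 0$ for every $f \in \Sigma$, Lemma~\ref{lem:sqfree-criterion} can only be applied with $c \in \F_p^\times$ and $u \in \F_p[x]_{m, x \nmid u}$, so the outer sum is restricted accordingly; and the lift-counting step cannot use the constant coefficient as its free direction and must instead invoke some other $x^k$-coefficient. The case $p = 2$ is immediate from Lemma~\ref{lem:sqfree-criterion}, so I assume $p$ odd below.

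First I would prove the following analog of Proposition~\ref{prop-density:sqf-1-lift-count}: for every $c \in \F_p^\times$ and every $u \in \F_p[x]_{m, x \nmid u}$ of degree $n - 2$,
\[ \lambda_p(\{f \in \Sigma : \overline{f} = (x + c)^2 u, \; p^2 \nmid f(-\tilde{c})\}) = \left(1 - \frac{1}{p}\right) \lambda_p(\{f \in \Sigma : \overline{f} = (x + c)^2 u\}). \]
Choose an index $k < n$ whose $x^k$-coefficient of $\Sigma$ is defined by congruence conditions mod $p$, and fix every other coefficient of $f$ mod $p^2$. The $p$ lifts of $a_k$ mod $p^2$ all lie in $\Sigma$, and the map $a_k \mapsto f(-\tilde{c}) \bmod p^2$ is affine with slope $(-\tilde{c})^{n - k}$, which is a unit mod $p$ since $c \neq 0$. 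Hence exactly one of the $p$ lifts satisfies $p^2 \mid f(-\tilde{c})$, giving the claim.

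With this in hand the rest follows the original proof step for step. Lemma~\ref{lem:sqfree-criterion}, together with the observation that $\overline{f}(0) \neq 0$ forces $c \in \F_p^\times$ and $x \nmid u$, gives
\[ P_1^{\sqf}(\Sigma) = \sum_{c \in \F_p^\times} \sum_{\substack{u \in \F_p[x]_{m, x \nmid u} \\ \deg(u) = n - 2}} \mathbf{1}_{x + c}(u) \, |\mu(u)| \, \frac{\lambda_p(\{f \in \Sigma : \overline{f} = (x + c)^2 u, \, p^2 \nmid f(-\tilde{c})\})}{\lambda_p(\Sigma)}. \]
I would then substitute the lift-counting identity above, rewrite measures using $\Sigma$-unit-admissibility, Fourier-expand $w$ along $\widehat{G}$, and collapse the inner sum via complete multiplicativity of $\chi \circ \psi$ on the monoid $\F_p[x]_{m, x \nmid u}$ (which contains $x + c$ for every $c \neq 0$), obtaining
\[ L_{\mathbf{1}_{x + c} \, |\mu| \, (\chi \circ \psi)}(T) = \frac{1}{1 + \chi(\psi(x + c)) \, T} \cdot \frac{L_{\chi \circ \psi}(T)}{L_{\chi^2 \circ \psi}(T^2)}. \]
Substituting this back produces the stated formula.

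The only content-bearing obstacle is the lift-counting step, where the original proof exploited that $f(-\tilde{c})$ is linear in the constant coefficient with slope $1$; that device is no longer available. The resolution is that $c \neq 0$ makes $(-\tilde{c})^{n - k}$ a unit for every $k < n$, so any single $x^k$-coefficient made free mod $p$ by the hypothesis can play the role that the constant coefficient did in the proof of Theorem~\ref{thm-density:sqf-1}. The remaining manipulations are a routine translation of that proof into the monoid $\F_p[x]_{m, x \nmid u}$.
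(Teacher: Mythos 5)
Your proposal is correct and matches the paper's proof essentially step for step: the paper likewise reduces everything to the proof of Theorem~\ref{thm-density:sqf-1} with $\F_p[x]_m$ replaced by $\F_p[x]_{m, x \nmid u}$ and $c$ restricted to $\F_p^{\times}$, and replaces Proposition~\ref{prop-density:sqf-1-lift-count} by exactly the analogue you state, justified by the same observation that varying the free $x^k$-coefficient perturbs $f(-\tilde{c})$ by a unit multiple since $c \neq 0$. No gaps.
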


\begin{theorem}\label{thm-density-unit:max}
Suppose that $f(0) \in \Z_p^{\times}$ for every $f \in \Sigma$, and suppose that there exists a non-negative integer $k_0 \leq \lceil n/2 \rceil$ such that the $x^k$-coefficient of $\Sigma$ is defined by congruence conditions mod $p$ for all integers $k$ with $k_0 \leq k < k_0 + \lfloor n/2 \rfloor$.
Let $(G, \psi, w)$ be a $\Sigma$-unit-admissible triple.
Then \[ P^{\max}(\Sigma) = \sum_{\chi \in \widehat{G}} \hat{w}(\chi) \left[\frac{L_{\chi \circ \psi}(T)}{L_{\chi^2 \circ \psi}(T^2/p)}\right]_n. \]
\end{theorem}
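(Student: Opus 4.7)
\textbf{Proof proposal for Theorem~\ref{thm-density-unit:max}.} My plan is to mirror the proof of Theorem~\ref{thm-density:max}, adapting to two changes: the restriction $f(0) \in \Z_p^\times$ forces the relevant sums to live over $\F_p[x]_{m, x \nmid u}$, and the interval of coordinates on which $\Sigma$ satisfies congruence conditions modulo $p$ has been translated from $[0, \lfloor n/2 \rfloor)$ to $[k_0, k_0 + \lfloor n/2 \rfloor)$.

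Applying Dedekind's criterion (Lemma~\ref{lem-density:max-criterion}), Lemma~\ref{lem-density:max-intersect}, and inclusion-exclusion as in the derivation of Lemma~\ref{lem-density:max-overgeneral} yields
\[ P^{\max}(\Sigma) = \sum_{u \in \F_p[x]_m} \mu(u) \, \frac{\lambda_p(\Sigma \cap (p, \tilde u)^2)}{\lambda_p(\Sigma)}. \]
Since $(p, \tilde u)^2 \subseteq (p, \tilde u^2)$, any $f \in \Sigma \cap (p, \tilde u)^2$ satisfies $u^2 \mid \overline f$; if in addition $x \mid u$, this forces $\overline{f(0)} = 0$, contradicting $f(0) \in \Z_p^\times$. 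Hence the sum is supported on $u \in \F_p[x]_{m, x \nmid u}$, and the bulk of the work reduces to establishing the analog of Proposition~\ref{prop-density:max-reduce}: for every such $u$ of degree $d \leq \lfloor n/2 \rfloor$,
\[ \lambda_p(\Sigma \cap (p, \tilde u)^2) = p^{-d} \, \lambda_p(\{f \in \Sigma : u^2 \mid \overline f\}). \]

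The main obstacle lies in the bijection underlying this identity. In the original proof, the shift set $A_d$ consists of polynomials $p \tilde g$ with $\tilde g \in \F_p[x]$ of degree less than $d$; I would replace $A_d$ by the set $A_{k_0, d}$ of polynomials $p \tilde g$ with $\tilde g$ supported on monomials $x^k$ for $k_0 \leq k < k_0 + d$. Well-definedness of the map $\phi(f, g) = f + g$ into $\Sigma_{\bmod p^2}$ uses the hypothesis that the $x^k$-coefficient of $\Sigma$ is defined by congruence conditions mod $p$ throughout $[k_0, k_0 + d) \subseteq [k_0, k_0 + \lfloor n/2 \rfloor)$, while the bound $k_0 + d \leq \lceil n/2 \rceil + \lfloor n/2 \rfloor = n$ ensures that $A_{k_0, d}$ does not affect the leading coefficient. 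Bijectivity of $\phi$ reduces to the algebraic claim that the $\F_p$-linear reduction map from $\{g \in \F_p[x] : \mathrm{supp}(g) \subseteq [k_0, k_0 + d)\}$ to $\F_p[x]/u\F_p[x]$ is an isomorphism. By dimension it suffices to prove injectivity: suppose $g = uh$ is supported on $[k_0, k_0 + d)$ for some nonzero $h \in \F_p[x]$. Since $u(0) \neq 0$, the smallest degree of a nonzero monomial in $uh$ equals the smallest degree of a nonzero monomial in $h$, and since $u$ is monic of degree $d$, the largest such degree in $uh$ equals that of $h$ plus $d$. Both must lie in $[k_0, k_0 + d)$, forcing $\deg_{\min}(h) \geq k_0$ and $\deg_{\max}(h) \leq k_0 - 1$, a contradiction.

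With this modified reduction in hand, the remainder follows the proof of Theorem~\ref{thm-density:max} step by step. Rearranging gives
\[ P^{\max}(\Sigma) = \sum_{\substack{u, v \in \F_p[x]_{m, x \nmid u} \\ 2 \deg u + \deg v = n}} \frac{\mu(u)}{p^{\deg u}} \cdot \frac{\lambda_p(\{f \in \Sigma : \overline f = u^2 v\})}{\lambda_p(\Sigma)}, \]
and invoking the $\Sigma$-unit-admissibility of $(G, \psi, w)$ together with Fourier expansion converts this into $\sum_{\chi \in \widehat G} \hat w(\chi) \, [L_{\mu \cdot (\chi^2 \circ \psi)}(T^2/p) \, L_{\chi \circ \psi}(T)]_n$, where the generating series are now taken over $\F_p[x]_{m, x \nmid u}$. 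Since $\chi^2 \circ \psi$ is completely multiplicative on this monoid, we have $L_{\mu \cdot (\chi^2 \circ \psi)}(T^2/p) = L_{\chi^2 \circ \psi}(T^2/p)^{-1}$, yielding the claimed formula.
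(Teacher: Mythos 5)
Your proposal is correct and follows essentially the same route as the paper: both reduce to the analogue of Proposition~\ref{prop-density:max-reduce} by shifting the perturbation set by $x^{k_0}$ (your $A_{k_0,d}$ is exactly the paper's $x^{k_0} A_d$), using $k_0 + d \leq n$ for well-definedness and $x \nmid u$ for bijectivity, and then running the Fourier-expansion argument over $\F_p[x]_{m, x \nmid u}$. The only cosmetic difference is that you establish bijectivity of $\phi$ via a dimension count and injectivity of the reduction map modulo $u$, whereas the paper checks injectivity and surjectivity of $\phi$ directly using the division algorithm with $g_0 = u g_1 + x^{k_0} g_2$; both hinge on the same coprimality of $u$ and $x^{k_0}$.
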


Theorem~\ref{thm-density-unit:sqf-0} follows from the proof of Theorem~\ref{thm-density:sqf-0} using Lemma~\ref{lem:sqfree-criterion}, replacing $\F_p[x]_m$ with $\F_p[x]_{m, x \nmid u}$.
Similarly, Theorem~\ref{thm-density-unit:sqf-1} follows from the proof of Theorem~\ref{thm-density:sqf-1} using Lemma~\ref{lem:sqfree-criterion}, with Proposition~\ref{prop-density:sqf-1-lift-count} replaced by the following analogue.

\begin{proposition}
Suppose that there exists $k < n$ such that the $x^k$-coefficient of $\Sigma$ is defined by congruence conditions mod $p$.
Then for any $c \in \F_p^{\times}$ and $u \in \F_p[x]_{m, x \nmid u}$ of degree $n - 2$, letting $\tilde{c}$ be an arbitrary $p$-adic lift of $c$, we have
\[ \lambda_p(\{f \in \Sigma : \overline{f} = (x + c)^2 u, p^2 \nmid f(-\tilde{c})\}) = \left(1 - \frac{1}{p}\right) \lambda_p(\{f \in \Sigma : \overline{f} = (x + c)^2 u\}). \]
\end{proposition}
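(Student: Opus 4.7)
The plan is to adapt the proof of Proposition~\ref{prop-density:sqf-1-lift-count} to the unit-constant setting. The obstacle in that original argument was that one freely varied the constant coefficient $a_n$ among its $p$ lifts mod $p^2$ and observed that exactly one such lift caused $p^2 \mid f(-\tilde{c})$. In the present setting we cannot do that directly: the constraint $f(0) \in \Z_p^{\times}$ is certainly preserved by adding multiples of $p$ to $a_n$, but the set $\Sigma$ itself need not be closed under such a shift (for instance, when the constant coefficient is fixed as a unit, as in Theorem~\ref{thm-local:fixconst}). The whole point of the hypothesis is that some \emph{other} coefficient $a_{n-k}$ (with $k < n$) is free to vary by multiples of $p$ inside $\Sigma$.

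The approach is therefore to substitute this free coefficient for the role played by the constant coefficient in the original argument. Fix any index $k < n$ such that the $x^k$-coefficient of $\Sigma$ is defined by congruence conditions mod $p$. Since $\Sigma$ is cut out by congruence conditions mod $p^2$, both sides of the claimed equality can be computed by counting lifts in $\Sigma_{\bmod p^2}$ and dividing by $p^{2n}$. Fix the reductions mod $p^2$ of all coefficients other than $a_{n-k}$ consistently with $\overline{f} = (x+c)^2 u$ and with $f \in \Sigma$; the $x^k$-coefficient then has exactly $p$ admissible lifts mod $p^2$, namely $a_{n-k} + pt$ for $t$ ranging over $\F_p$, and each of these lifts remains in $\Sigma$ by the congruence-mod-$p$ hypothesis on that coefficient.

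Now track how $f(-\tilde{c}) \bmod p^2$ depends on $t$. Replacing $f$ by $f + pt\, x^k$ changes $f(-\tilde{c})$ by $pt(-\tilde{c})^k$. Because $c \in \F_p^{\times}$, the element $(-\tilde{c})^k$ is a $p$-adic unit for every $k \geq 0$, so as $t$ ranges over $\F_p$ the quantity $pt(-\tilde{c})^k \bmod p^2$ ranges exactly once over $p\Z_p/p^2\Z_p$. Consequently $f(-\tilde{c}) \bmod p^2$ takes each of the $p$ values in its coset modulo $p$ exactly once, and precisely one of these is $0 \bmod p^2$. Hence $p - 1$ out of the $p$ lifts satisfy $p^2 \nmid f(-\tilde{c})$, and summing over the choices of the remaining coefficients yields the factor $1 - 1/p$ between the two measures.

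The step I expect to require the most care is the uniform treatment of the index $k$: when $k \geq 1$ the admissible variation trivially preserves the unit-constant constraint, but when $k = 0$ one must also observe that $a_n + pt$ remains a unit (which it does, since adding a non-unit to a unit gives a unit), and one must use that $(-\tilde{c})^0 = 1$ is a unit. Apart from this minor case check, the argument is a direct transcription of the orbit-counting idea of Proposition~\ref{prop-density:sqf-1-lift-count}, with the free coefficient being $a_{n-k}$ instead of $a_n$ and with the role of $1$ replaced by $(-\tilde{c})^k$, which is a unit precisely because we restrict to $c \in \F_p^{\times}$ in the statement.
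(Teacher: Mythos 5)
Your proposal is correct and follows essentially the same route as the paper: fix all coefficients except the $x^k$-coefficient, note that its $p$ lifts mod $p^2$ all stay in $\Sigma$ by hypothesis, and observe that shifting by $pt\,x^k$ changes $f(-\tilde{c})$ by $pt(-\tilde{c})^k$ with $(-\tilde{c})^k$ a unit because $c \neq 0$, so exactly one lift satisfies $p^2 \mid f(-\tilde{c})$. Your extra remark about the $k = 0$ case is a harmless elaboration of the same point.
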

\begin{proof}
The proof is similar to those of Proposition~\ref{prop-density:sqf-1-lift-count}.
Fix all coefficients of $f$ except the $x^k$-coefficient.
The $x^k$-coefficient of $\overline{f}$ has $p$ possible lifts mod $p^2$, and $p^2 \mid f(-\tilde{c})$ holds for exactly one of them since $c \neq 0$.
\end{proof}

Finally, we prove Theorem~\ref{thm-density-unit:max} by modifying the proof of Theorem~\ref{thm-density:max}.
Lemma~\ref{lem-density:max-overgeneral} still holds with $\F_p[x]_m$ replaced by $\F_p[x]_{m, x \nmid u}$, so Theorem~\ref{thm-density-unit:max} reduces to the following analogue of Proposition~\ref{prop-density:max-reduce}.

\begin{proposition}
Suppose that there exists $k_0 \leq \lceil n/2 \rceil$ such that the $x^k$-coefficient of $\Sigma$ is defined by congruence conditions mod $p$ for all integers $k$ with $k_0 \leq k < k_0 + \lfloor n/2 \rfloor$.
Then for any $u \in \F_p[x]_{m, x \nmid u}$,
\[ \lambda_p(\Sigma \cap (p, \tilde{u})^2) = p^{-\deg(u)} \, \lambda_p(\{f \in \Sigma : u^2 \mid \overline{f}\}). \]
\end{proposition}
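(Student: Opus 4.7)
The plan is to adapt the proof of Proposition~\ref{prop-density:max-reduce}, using the hypothesis $x \nmid u$ to replace the standard perturbation subspace (supported in degrees $0, \ldots, d-1$) by one supported in degrees $k_0, \ldots, k_0+d-1$, where $d = \deg(u)$. Since the identity is trivial when $d > \lfloor n/2 \rfloor$ (both sides vanish), I may assume $d \leq \lfloor n/2 \rfloor$; then $k_0 + d - 1 \leq \lceil n/2 \rceil + \lfloor n/2 \rfloor - 1 = n - 1$, so these positions all lie within the range where $\Sigma$ is defined by congruence conditions mod $p$. Exactly as in the original argument, reducing $\Sigma$ modulo $p^2$ reduces the claim to
\[ \#(\Sigma_{\bmod p^2} \cap (p, \tilde{u})^2) = p^{-d} \, \#\{f \in \Sigma_{\bmod p^2} : u^2 \mid \overline{f}\}. \]

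To establish this, let $A_d$ denote the set of polynomials over $\Z/p^2 \Z$ of the form $p x^{k_0} \tilde{h}$ with $h \in \F_p[x]$ of degree less than $d$, and consider the map
\[ \phi : (\Sigma_{\bmod p^2} \cap (p, \tilde{u})^2) \times A_d \to \{f \in \Sigma_{\bmod p^2} : u^2 \mid \overline{f}\}, \qquad \phi(f, g) = f + g. \]
Well-definedness follows from $(p, \tilde{u})^2 \subseteq (p, \tilde{u}^2)$ combined with the congruence-condition hypothesis on the shifted range. Since $|A_d| = p^d$, it remains to show that $\phi$ is bijective.

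For injectivity, if $\phi(f_1, g_1) = \phi(f_2, g_2)$, then $g_2 - g_1 = f_1 - f_2 \in (p, \tilde{u})^2 \cap (p) = (p \tilde{u})$. Writing $g_i = p x^{k_0} \tilde{h}_i$ with $\deg(h_i) < d$, this becomes $x^{k_0}(h_2 - h_1) \equiv 0 \pmod{u}$ in $\F_p[x]$; since $x \nmid u$, we have $\gcd(x^{k_0}, u) = 1$, so $u \mid h_2 - h_1$, and the degree bound forces $h_1 = h_2$. For surjectivity, given $h \in \Sigma_{\bmod p^2}$ with $u^2 \mid \overline{h}$, write $h = \tilde{u}^2 f_0 + p \tilde{h}_0$ with $f_0, h_0 \in \F_p[x]$. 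The invertibility of $x^{k_0}$ modulo $u$ lets me write $h_0 = u h_1 + x^{k_0} h_2$ with $\deg(h_2) < d$, giving
\[ h = (\tilde{u}^2 f_0 + p \tilde{u} \tilde{h}_1) + p x^{k_0} \tilde{h}_2, \]
with $h - p x^{k_0} \tilde{h}_2 \in (p, \tilde{u})^2$ and $p x^{k_0} \tilde{h}_2 \in A_d$; the congruence-condition hypothesis on $\Sigma$ guarantees that subtracting $p x^{k_0} \tilde{h}_2$ keeps us inside $\Sigma_{\bmod p^2}$.

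The key new ingredient over Proposition~\ref{prop-density:max-reduce} is the use of $x \nmid u$ to replace the standard perturbation basis $\{1, x, \ldots, x^{d-1}\}$ by the shifted basis $\{x^{k_0}, x^{k_0+1}, \ldots, x^{k_0+d-1}\}$; this is precisely what enables the weaker hypothesis on $\Sigma$, in which only $\lfloor n/2 \rfloor$ consecutive coefficients (rather than the first $\lfloor n/2 \rfloor$) need to be defined by congruence conditions mod $p$. I do not anticipate a serious obstacle: the unit-constant hypothesis is exactly what makes both the division step (surjectivity) and the cancellation step (injectivity) go through as cleanly as in the original proof.
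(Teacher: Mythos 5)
Your proof is correct and follows essentially the same route as the paper's: the paper likewise keeps the bijection $\phi$ from Proposition~\ref{prop-density:max-reduce} but shifts the perturbation into degrees $k_0,\ldots,k_0+d-1$ (writing $\phi(f,g)=f+x^{k_0}g$ rather than folding the shift into $A_d$ as you do), and uses $\gcd(x^{k_0},u)=1$ for both the injectivity cancellation and the modified division $g_0 = ug_1 + x^{k_0}g_2$. The two write-ups differ only cosmetically.
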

\begin{proof}
We modify the proof of Proposition~\ref{prop-density:max-reduce}.
We denote $d = \deg(u)$; we may assume that $d \leq \lfloor n/2 \rfloor$.
We redefine the map $\phi : \left(\Sigma_{\bmod p^2} \cap (p, \tilde{u})^2\right) \times A_d \to \{f \in \Sigma_{\bmod{p^2}} : u^2 \mid \overline{f}\}$ by the formula
\[ \phi(f, g) = f + x^{k_0} g. \]
Note that $\phi$ is still well-defined since $k_0 + d \leq k_0 + \lfloor n/2 \rfloor \leq n$.
As in the proof of Proposition~\ref{prop-density:max-reduce}, it suffices to show that $\phi$ is bijective.
The proof of injectivity works as is, with $g_1$ and $g_2$ replaced by $x^{k_0} g_1$ and $x^{k_0} g_2$, since $x \nmid u$ means that $x^{k_0} g_2 - x^{k_0} g_1 \in (p \tilde{u})$ implies $g_2 - g_1 \in (p \tilde{u})$.
For the proof of surjectivity, we write $g_0 = ug_1 + x^{k_0} g_2$ instead of $g_0 = ug_1 + g_2$; the desired polynomials $g_1, g_2 \in \F_p[x]_m$ with $\deg(g_2) < d$ still exist since $u$ and $x^{k_0}$ are coprime.
Thus, as opposed to $h = \tilde{u}^2 f_0 + p \tilde{u} \tilde{g_1} + p \tilde{g_2}$, we have
\[ h = \tilde{u}^2 f_0 + p \tilde{u} \tilde{g_1} + p x^{k_0} \tilde{g_2} \]
    with $\tilde{u}^2 f_0 + p \tilde{u} \tilde{g_1} \in \Sigma_{\bmod{p^2}} \cap (p, \tilde{u})^2$ and $p \tilde{g_2} \in A_d$.
This proves that $\phi$ is surjective.
\end{proof}

\section{Proof of Theorem~\ref{thm-local:copconst}}\label{section:copconst}

Let $n \geq 2$ and $p$ be a prime number.
Recall that
\[ V_n(\Z_p)_{p \nmid a_n} = \{f(x) = x^n + a_1 x^{n - 1} + \ldots + a_n \in \Z_p[x] : p \nmid a_n\}. \]
Theorem~\ref{thm-local:copconst} follows from the following exact formula for the densities of $V_n(\Z_p)_{p \nmid a_n}$.

\begin{theorem}\label{thm-copconst:main}
For any $n \geq 2$ and prime number $p$,
\begin{align}
    P^{\max}(V_n(\Z_p)_{p \nmid a_n}) &= 1 - \frac{1}{p^2 + p + 1} + \begin{cases} \dfrac{1}{(p^2 + p + 1) p^{3(n - 1)/2}} & \text{if } 2 \nmid n, \\ -\dfrac{p + 1}{(p^2 + p + 1) p^{3n/2 - 1}} & \text{if } 2 \mid n, \end{cases} \label{eq-copconst:max} \\
    P_0^{\sqf}(V_n(\Z_p)_{p \nmid a_n}) &= \frac{p}{p + 1} \left(1 - \frac{(-1)^n}{p^n}\right). \label{eq-copconst:sqf-0}
\end{align}
We have $P_1^{\sqf}(V_n(\Z_2)_{2 \nmid a_n}) = 0$ and if $p$ is odd, then
\begin{equation}\label{eq-copconst:sqf-1}
    P_1^{\sqf}(V_n(\Z_p)_{p \nmid a_n}) = \frac{(p - 1)^2}{p(p + 1)^2} - \frac{(-1)^n}{p^n} \left((n - 1) \frac{(p - 1)^2}{p + 1} - \frac{p(p - 1)(p + 3)}{(p + 1)^2}\right).
\end{equation}
\end{theorem}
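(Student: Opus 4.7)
The plan is to apply the three density formulas of \S\ref{section:density-formula-unit} directly to $\Sigma = V_n(\Z_p)_{p \nmid a_n}$. Every $f \in \Sigma$ has $f(0) = a_n \in \Z_p^\times$, and the coefficients $a_1, \ldots, a_{n - 1}$ are completely unconstrained, so in particular they are defined by congruence conditions mod $p$. Thus the hypotheses of Theorem~\ref{thm-density-unit:sqf-0} and Theorem~\ref{thm-density-unit:sqf-1} hold, and for Theorem~\ref{thm-density-unit:max} we may take $k_0 = 1$.

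Next I would choose the simplest possible $\Sigma$-unit-admissible triple: take $G = \{1\}$, $\psi$ the trivial homomorphism, and $w$ the constant function. Since $\lambda_p(\Sigma) = (p - 1)/p$ and every $u \in \F_p[x]_{m, x \nmid u}$ of degree $n$ has preimage of measure $p^{-n}$, we get $w \equiv 1/(p^{n-1}(p-1))$ and $\hat w(1) = 1/(p^{n-1}(p-1))$. The associated generating series, counting the $p^{n-1}(p-1)$ monic degree-$n$ polynomials over $\F_p$ not divisible by $x$, is
\[ L_{\mathbf{1}}(T) = 1 + \frac{(p-1)T}{1 - pT} = \frac{1 - T}{1 - pT}. \]

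The routine part is then extracting $T^n$-coefficients by partial fractions. For $P_0^{\sqf}$ one simplifies $L_{\mathbf{1}}(T)/L_{\mathbf{1}}(T^2) = (1 - pT^2)/((1-pT)(1+T))$, whose simple poles at $T = 1/p$ and $T = -1$ give \eqref{eq-copconst:sqf-0} uniformly in $p$. For $P_1^{\sqf}$ with $p$ odd, the factor $1/(1 + T)$ coming from the sum over $c \in \F_p^\times$ (note $\chi(\psi(x+c)) = 1$) produces a double pole at $T = -1$; the decomposition $\frac{A}{1-pT} + \frac{B}{1+T} + \frac{C}{(1+T)^2}$ applied to $(1 - pT^2)/((1-pT)(1+T)^2)$ then yields \eqref{eq-copconst:sqf-1}, and the case $p = 2$ is immediate from Theorem~\ref{thm-density-unit:sqf-1}.

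The main obstacle is $P^{\max}$, where Theorem~\ref{thm-density-unit:max} produces
\[ P^{\max}(\Sigma) = \frac{1}{p^{n - 1}(p-1)} \left[\frac{(1 - T)(1 - T^2)}{(1 - pT)(1 - T^2/p)}\right]_n. \]
The denominator $1 - T^2/p = (1 - T/\sqrt p)(1 + T/\sqrt p)$ introduces irrational poles that only recombine nicely when one groups by $n \bmod 2$. My plan is to first reduce by polynomial division,
\[ \frac{(1 - T)(1 - T^2)}{(1 - pT)(1 - T^2/p)} = 1 + \frac{(p-1)\,T\,(1 - T/p)}{(1 - pT)(1 - T^2/p)}, \]
and then compute the coefficients $a_n$ of $1/((1-pT)(1-T^2/p))$ via partial fractions over $\sqrt p$, observing that the two conjugate contributions collapse to a single real term whose exponent depends on the parity of $n$. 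Assembling $P^{\max}(\Sigma) = (a_{n-1} - a_{n-2}/p)/p^{n-1}$ and simplifying via the identity $p^3 - 1 = (p-1)(p^2 + p + 1)$ produces the denominator $p^2 + p + 1$ appearing in \eqref{eq-copconst:max} and the parity-dependent correction term.
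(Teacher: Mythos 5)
Your proposal is correct and follows essentially the same route as the paper: the trivial unit-admissible triple with $\hat w(1)=1/((p-1)p^{n-1})$, the generating series $L_{\mathbf{1}_x}(T)=(1-T)/(1-pT)$, and coefficient extraction by partial fractions from $\frac{1-pT^2}{(1-pT)(1+T)}$, $\frac{1-pT^2}{(1-pT)(1+T)^2}$, and $\frac{(1-T)(1-T^2)}{(1-pT)(1-T^2/p)}$. The only cosmetic difference is in the $P^{\max}$ bookkeeping: the paper keeps $1/(1-T^2/p)$ intact as a series in $T^2$ and reads off the coefficient by parity, whereas you first do polynomial division and then split over $\sqrt p$; both yield \eqref{eq-copconst:max}.
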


It remains to prove Theorem~\ref{thm-copconst:main}.
First note that $V_n(\Z_p)_{p \nmid a_n}$ is defined by congruence conditions mod $p$, and $f(0) \in \Z_p^{\times}$ for any $f \in V_n(\Z_p)_{p \nmid a_n}$.
Thus, the formulas from \S\ref{section:density-formula-unit} apply directly to $V_n(\Z_p)_{p \nmid a_n}$.
We start by finding a $V_n(\Z_p)_{p \nmid a_n}$-unit-admissible triple $(G, \psi, w)$ as defined in Definition~\ref{def:unit-admissible} and computing $\hat{w}(\chi)$ and $L_{\chi \circ \psi}(T)$ for all $\chi \in \widehat{G}$.
Then we apply the formulas obtained in \S\ref{section:density-formula-unit} to prove Theorem~\ref{thm-copconst:main}.

It is easy to see that for any $u \in \F_p[x]_{m, x \nmid u}$ of degree $n$, we have
\[ \frac{\lambda_p(\{f \in V_n(\Z_p)_{p \nmid a_n} : \overline{f} = u\})}{\lambda_p(V_n(\Z_p)_{p \nmid a_n})} = \frac{1}{(p - 1) p^{n - 1}}. \]
Thus we can take $G = \{1\}$, the trivial group, $\psi : \F_p[x]_{m, x \nmid u} \to \{1\}$ the trivial map, and the weight $w(1) = 1/((p - 1) p^{n - 1})$, giving us a $V_n(\Z_p)_{p \nmid a_n}$-unit-admissible triple $(G, \psi, w)$.
Clearly, we have
\begin{equation}\label{eq-copconst:weightFT}
    \hat{w}(1) = w(1) = \frac{1}{(p - 1) p^{n - 1}}.
\end{equation}
Let $\mathbf{1}_x : \F_p[x]_{m, x \nmid u} \to \C$ denote the all-one function on $\F_p[x]_{m, x \nmid u}$.
Then $1 \circ \psi = \mathbf{1}_x$, where here $1$ denotes the trivial character of $\{1\}$.
Direct computation shows that
\begin{equation}\label{eq:1x-gen}
    L_{\mathbf{1}_x}(T) = \frac{1 - T}{1 - pT}.
\end{equation}

We are now ready to prove Theorem~\ref{thm-copconst:main}.
We first compute $P^{\max}(V_n(\Z_p)_{p \nmid a_n})$.
Using the formula~\eqref{eq-copconst:weightFT} for $\hat{w}$, Theorem~\ref{thm-density-unit:max} yields
\begin{equation}\label{eq-copconst:max-init}
    P^{\max}(V_n(\Z_p)_{p \nmid a_n}) = \frac{1}{(p - 1) p^{n - 1}} \left[\frac{L_{\mathbf{1}_x}(T)}{L_{\mathbf{1}_x}(T^2/p)}\right]_n.
\end{equation}
By~\eqref{eq:1x-gen}, we have
\[ \frac{L_{\mathbf{1}_x}(T)}{L_{\mathbf{1}_x}(T^2/p)} = \frac{(1 - T)(1 - T^2)}{(1 - pT)(1 - T^2/p)} = 1 + \frac{(p - 1)(p + 1)}{p^2 + p + 1} \cdot \frac{1}{1 - pT} - \frac{(p - 1)(p + 1 - T)}{p^2 + p + 1} \cdot \frac{1}{1 - T^2/p}. \]
As a result, since $n \geq 2$, we get
\begin{align*}
    P^{\max}(V_n(\Z_p)_{p \nmid a_n})
    &= \frac{1}{(p - 1) p^{n - 1}} \left(\frac{(p - 1)(p + 1)}{p^2 + p + 1} \cdot p^n - \frac{(p - 1) [p + 1 - T]_{n - 2 \lfloor n/2 \rfloor}}{p^2 + p + 1} \cdot \frac{1}{p^{\lfloor n/2 \rfloor}}\right) \\
    &= \frac{p^2 + p}{p^2 + p + 1} - \frac{[p + 1 - T]_{n - 2 \lfloor n/2 \rfloor}}{(p^2 + p + 1) p^{n - 1 + \lfloor n/2 \rfloor}}.
\end{align*}
Since $n - 2 \lfloor n/2 \rfloor$ is equal to $1$ if $n$ is odd and $0$ if $n$ is even, this equality implies~\eqref{eq-copconst:max}.

Next, we compute $P_0^{\sqf}(V_n(\Z_p)_{p \nmid a_n})$.
Using the formula~\eqref{eq-copconst:weightFT} for $\hat{w}$, Theorem~\ref{thm-density-unit:sqf-0} yields
\begin{equation}\label{eq-copconst:sqf-0-init}
    P_0^{\sqf}(V_n(\Z_p)_{p \nmid a_n}) = \frac{1}{(p - 1) p^{n - 1}} \left[\frac{L_{\mathbf{1}_x}(T)}{L_{\mathbf{1}_x}(T^2)}\right]_n.
\end{equation}
By~\eqref{eq:1x-gen}, we have
\[ \frac{L_{\mathbf{1}_x}(T)}{L_{\mathbf{1}_x}(T^2)} = \frac{1 - pT^2}{(1 + T)(1 - pT)} = 1 + \frac{p - 1}{p + 1} \left(\frac{1}{1 - pT} - \frac{1}{1 + T}\right). \]
As a result, since $n \geq 2$, we get
\[ P_0^{\sqf}(V_n(\Z_p)_{p \nmid a_n}) = \frac{1}{(p - 1) p^{n - 1}} \cdot \frac{p - 1}{p + 1} (p^n - (-1)^n) = \frac{p}{p + 1} \left(1 - \frac{(-1)^n}{p^n}\right). \]
This proves~\eqref{eq-copconst:sqf-0}.

Finally, we compute $P_1^{\sqf}(V_n(\Z_p)_{p \nmid a_n})$ for $p$ odd.
Using the formula~\eqref{eq-copconst:weightFT} for $\hat{w}$, Theorem~\ref{thm-density-unit:sqf-1} yields
\begin{equation}\label{eq-copconst:sqf-1-init}
    P_1^{\sqf}(V_n(\Z_p)_{p \nmid a_n}) = \frac{1 - 1/p}{(p - 1) p^{n - 1}} \left[\frac{p - 1}{1 + T} \cdot \frac{L_{\mathbf{1}_x}(T)}{L_{\mathbf{1}_x}(T^2)}\right]_{n - 2} = \frac{p - 1}{p^n} \left[\frac{1}{1 + T} \cdot \frac{L_{\mathbf{1}_x}(T)}{L_{\mathbf{1}_x}(T^2)}\right]_{n - 2}.
\end{equation}
By~\eqref{eq:1x-gen}, we have
\[ \frac{1}{1 + T} \cdot \frac{L_{\mathbf{1}_x}(T)}{L_{\mathbf{1}_x}(T^2)} = \frac{1 - pT^2}{(1 + T)^2 (1 - pT)} = \frac{p(p - 1)}{(p + 1)^2} \cdot \frac{1}{1 - pT} - \frac{p - 1}{p + 1} \cdot \frac{1}{(1 + T)^2} + \frac{p(p + 3)}{(p + 1)^2} \cdot \frac{1}{1 + T}. \]
As a result, we get
\begin{align*}
    P_1^{\sqf}(V_n(\Z_p)_{p \nmid a_n})
    &= \frac{p - 1}{p^n} \left(\frac{p(p - 1)}{(p + 1)^2} \cdot p^{n - 2} - \frac{p - 1}{p + 1} \cdot (-1)^n (n - 1) + \frac{p(p + 3)}{(p + 1)^2} \cdot (-1)^n\right) \\
    &= \frac{(p - 1)^2}{p(p + 1)^2} - \frac{(-1)^n}{p^n} \left((n - 1) \frac{(p - 1)^2}{p + 1} - \frac{p(p - 1)(p + 3)}{(p + 1)^2}\right).
\end{align*}
This proves~\eqref{eq-copconst:sqf-1} and finishes the proof of Theorem~\ref{thm-copconst:main}.

\section{Proof of Theorem~\ref{thm-local:fixconst}}\label{section:fixconst}

Let $n \geq 2$ and $p$ be a prime number.
Fix $b_n \in \Z_p^{\times}$, and recall that
\begin{align*}
    V_n(\Z_p)_{a_n = b_n} &= \{f(x) = x^n + a_1 x^{n - 1} + \ldots + a_n \in \Z_p[x] : a_n = b_n\}, \\
    V_n(\Z_p)_{p \nmid a_n} &= \{f(x) = x^n + a_1 x^{n - 1} + \ldots + a_n \in \Z_p[x] : p \nmid a_n\}.
\end{align*}
We first state the exact formula for the densities of $V_n(\Z_p)_{a_n = b_n}$ in terms of the densities of $V_n(\Z_p)_{p \nmid a_n}$.
For each $n \geq 2$, odd prime number $p$, and $b \in \F_p^{\times}$, define the quantity
\begin{equation}\label{eq-fixconst:kappa}
    \kappa_{n, p}(b) = \frac{1}{p - 1} \sum_{\chi \in \widehat{\F_p^{\times}} \setminus \{1\}} \chi(\overline{b_n})^{-1} \sum_{c \in \F_p^{\times}} \chi(c)^n = \begin{cases} \gcd(n, p - 1) - 1 & \text{if } b \in \F_p^{\times} \text{ is an $n$th power,} \\ -1 & \text{if } b \in \F_p^{\times} \text{ is not an $n$th power.} \end{cases}
\end{equation}
Note that the second equality follows by applying character sum formula on $\widehat{\F_p^{\times}}$ and orthogonality of characters on the subgroup $\{\chi \in \widehat{\F_p^{\times}} : \chi^n = 1\}$, which is cyclic of size $\gcd(n, p - 1)$.
Then we prove:

\begin{theorem}\label{thm-fixconst:main}
If $p = 2$, then $P_1^{\sqf}(V_n(\Z_2)_{a_n = b_n}) = 0$ and
\begin{align}
    P^{\max}(V_n(\Z_2)_{a_n = b_n}) &= P^{\max}(V_n(\Z_2)_{2 \nmid a_n}), \label{eq-fixconst:max-two} \\
    P_0^{\sqf}(V_n(\Z_2)_{a_n = b_n}) &= P_0^{\sqf}(V_n(\Z_2)_{2 \nmid a_n}). \label{eq-fixconst:sqf-0-two}
\end{align}

If $p$ is odd and $2 \nmid n$, then
\begin{align}
    P^{\max}(V_n(\Z_p)_{a_n = b_n}) &= P^{\max}(V_n(\Z_p)_{p \nmid a_n}), \label{eq-fixconst:max-odd-deg-odd} \\
    P_0^{\sqf}(V_n(\Z_p)_{a_n = b_n}) &= P_0^{\sqf}(V_n(\Z_p)_{p \nmid a_n}), \label{eq-fixconst:sqf-0-odd-deg-odd} \\
    P_1^{\sqf}(V_n(\Z_p)_{a_n = b_n}) &= P_1^{\sqf}(V_n(\Z_p)_{p \nmid a_n}) - \frac{p - 1}{p^n} \kappa_{n, p}(\overline{b_n}). \label{eq-fixconst:sqf-1-odd-deg-odd}
\end{align}

If $p$ is odd and $2 \mid n$, then
\begin{align}
    P^{\max}(V_n(\Z_p)_{a_n = b_n}) &= P^{\max}(V_n(\Z_p)_{p \nmid a_n}) - \frac{1}{p^{3n/2 - 1}} \left(\frac{b_n}{p}\right), \label{eq-fixconst:max-odd-deg-even} \\
    P_0^{\sqf}(V_n(\Z_p)_{a_n = b_n}) &= P_0^{\sqf}(V_n(\Z_p)_{p \nmid a_n}) - \frac{1}{p^{n - 1}} \left(\frac{b_n}{p}\right), \label{eq-fixconst:sqf-0-odd-deg-even} \\
    P_1^{\sqf}(V_n(\Z_p)_{a_n = b_n}) &= P_1^{\sqf}(V_n(\Z_p)_{p \nmid a_n}) + \frac{p - 1}{p^n} \kappa_{n, p}(\overline{b_n}) - \frac{(p - 1)^2}{p^n} \left(\frac{n}{2} - 1\right) \left(\frac{b_n}{p}\right). \label{eq-fixconst:sqf-1-odd-deg-even}
\end{align}
\end{theorem}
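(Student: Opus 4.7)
The approach is to mirror the strategy from \S\ref{section:copconst} for $V_n(\Z_p)_{p \nmid a_n}$, but with a nontrivial finite abelian group $G$ that distinguishes the residue of the constant coefficient. Since squarefree discriminant and maximality are both mod $p^2$ conditions, I may work with $V_n(\Z_p)_{a_n \equiv b_n \bmod{p^2}}$ in place of $V_n(\Z_p)_{a_n = b_n}$; this set is defined by congruence conditions mod $p^2$, and every polynomial in it has unit constant term, so the framework of \S\ref{section:density-formula-unit} applies.

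First I would set up a unit-admissible triple. Take $G = \F_p^{\times}$, let $\psi : \F_p[x]_{m, x \nmid u} \to \F_p^{\times}$ be the monoid homomorphism $\psi(u) = u(0)$, and define
\[ w(c) = \begin{cases} 1/p^{n-1} & \text{if } c = \overline{b_n}, \\ 0 & \text{otherwise.} \end{cases} \]
Then $(\F_p^{\times}, \psi, w)$ is $V_n(\Z_p)_{a_n \equiv b_n \bmod{p^2}}$-unit-admissible, with $\hat{w}(\chi) = \chi(\overline{b_n})^{-1}/((p-1) p^{n-1})$. The generating series are easy to compute: $L_{1 \circ \psi}(T) = L_{\mathbf{1}_x}(T) = (1-T)/(1-pT)$ as in~\eqref{eq:1x-gen}, while for every nontrivial $\chi \in \widehat{\F_p^{\times}}$, counting monic degree-$n$ polynomials with a fixed nonzero constant term gives $L_{\chi \circ \psi}(T) = 1$.

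Applying Theorems~\ref{thm-density-unit:max}, \ref{thm-density-unit:sqf-0}, and~\ref{thm-density-unit:sqf-1} then expresses each density as a sum over $\chi \in \widehat{\F_p^{\times}}$. The $\chi = 1$ term recovers exactly the corresponding density of $V_n(\Z_p)_{p \nmid a_n}$ by comparison with~\eqref{eq-copconst:max-init}, \eqref{eq-copconst:sqf-0-init}, and~\eqref{eq-copconst:sqf-1-init}; the task is then to evaluate the nontrivial contributions. For $P^{\max}$ and $P_0^{\sqf}$, the terms with $\chi^2 \neq 1$ collapse to $[1]_n = 0$, so only the quadratic residue character $\chi_2$ (present only for $p$ odd) contributes. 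A short expansion of $(1-T^2)/(1-T^2/p)$ and $(1-pT^2)/(1-T^2)$ shows this correction vanishes for $n$ odd and produces the $n$-even corrections of~\eqref{eq-fixconst:max-odd-deg-even} and~\eqref{eq-fixconst:sqf-0-odd-deg-even}. For $p = 2$ the character group is trivial, giving~\eqref{eq-fixconst:max-two} and~\eqref{eq-fixconst:sqf-0-two} at once.

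The main obstacle is $P_1^{\sqf}$, where the formula in Theorem~\ref{thm-density-unit:sqf-1} carries the additional factor $\sum_{c \in \F_p^{\times}} \chi(c)^2/(1 + \chi(c) T)$. For nontrivial $\chi$ with $\chi^2 \neq 1$, this factor combines with the trivial generating series to yield the $[T^{n-2}]$-coefficient $(-1)^n \sum_{c} \chi(c)^n$, and aggregating over such $\chi$ assembles $(p-1)\kappa_{n,p}(\overline{b_n})$ after peeling off the $\chi_2$ term (using $\sum_c \chi_2(c)^n = 0$ for $n$ odd and $p-1$ for $n$ even). For $\chi = \chi_2$, grouping quadratic residues against nonresidues gives $\sum_c 1/(1+\chi_2(c)T) = (p-1)/(1-T^2)$, and extracting $[(p-1)(1-pT^2)/(1-T^2)^2]_{n-2}$ via $1/(1-T^2)^2 = \sum_k (k+1) T^{2k}$ produces the $(n/2-1)$ factor appearing in~\eqref{eq-fixconst:sqf-1-odd-deg-even}. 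Combining these pieces under case analysis on the parity of $n$ yields~\eqref{eq-fixconst:sqf-1-odd-deg-odd} and~\eqref{eq-fixconst:sqf-1-odd-deg-even}; for $p = 2$, Lemma~\ref{lem:sqfree-criterion} forces $P_1^{\sqf} = 0$ directly.
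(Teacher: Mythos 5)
Your proposal is correct and follows essentially the same route as the paper: the paper's \S\ref{section:fixconst} uses exactly the unit-admissible triple $(\F_p^{\times}, \ev_0, w)$ with $\ev_0(u) = u(0)$, the same computation $L_{\chi \circ \ev_0}(T) = 1$ for $\chi \neq 1$, the same splitting into the trivial character (recovering the $V_n(\Z_p)_{p \nmid a_n}$ densities via~\eqref{eq-copconst:max-init}, \eqref{eq-copconst:sqf-0-init}, \eqref{eq-copconst:sqf-1-init}), the quadratic character, and the remaining characters assembling $\kappa_{n,p}$. The only cosmetic difference is in how the $\chi_2$ contribution to $P_1^{\sqf}$ is bookkept between the $\kappa_{n,p}$ term and the $(n/2-1)$ correction, and the two reorganizations agree.
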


The asymptotic density formulas in Theorem~\ref{thm-local:fixconst} follows from Theorem~\ref{thm-fixconst:main}, Theorem~\ref{thm-local:copconst} on the asymptotic densities of $V_n(\Z_p)_{p \nmid a_n}$, and the estimate $\kappa_{n, p}(b) = O(n)$.
The densities $P_0^{\sqf}(V_n(\Z_p)_{a_n = b_n})$ and $P^{\max}(V_n(\Z_p)_{a_n = b_n})$ are positive since $x^n + b_n$ is squarefree mod $p$ if $p \nmid n$ while $x^n - x + b_n$ is squarefree mod $p$ if $p \mid n$.

It remains to prove Theorem~\ref{thm-fixconst:main}.
First, since squarefree discriminant and maximality are mod $p^2$ conditions, the densities for $V_n(\Z_p)_{a_n = b_n}$ are the same as those of the set
\[ V_n(\Z_p)_{a_n \equiv b_n \bmod{p^2}} = \{f(x) = x^n + a_1 x^{n - 1} + \ldots + a_n \in \Z_p[x] : a_n \equiv b_n \bmod{p^2}\}, \]
    which is defined by congruence conditions mod $p^2$.
We start by finding a $V_n(\Z_p)_{a_n \equiv b_n \bmod{p^2}}$-unit-admissible triple $(G, \psi, w)$ as defined in Definition~\ref{def:unit-admissible} and computing $\hat{w}(\chi)$ and $L_{\chi \circ \psi}(T)$ for all $\chi \in \widehat{G}$.
Then we apply the formulas obtained in \S\ref{section:density-formula-unit} to prove Theorem~\ref{thm-fixconst:main}.

It is easy to see that for any $u \in \F_p[x]_{m, x \nmid u}$ of degree $n$,
\[ \frac{\lambda_p(\{f \in V_n(\Z_p)_{a_n \equiv b_n \bmod{p^2}} : \overline{f} = u\})}{\lambda_p(V_n(\Z_p)_{a_n \equiv b_n \bmod{p^2}})} = \begin{cases} 1/p^{n - 1} & \text{if } u(0) = \overline{b_n}, \\ 0 & \text{if } u(0) \neq \overline{b_n}. \end{cases} \]
Define the monoid homomorphism $\ev_0 : \F_p[x]_{m, x \nmid u} \to \F_p^{\times}$ by the formula
\begin{equation}\label{eq-fixconst:ev0}
    \ev_0(u) = u(0).
\end{equation}
Therefore, $(\F_p^{\times}, \ev_0, w)$ is $V_n(\Z_p)_{a_n \equiv b_n \bmod{p^2}}$-unit-admissible, where the weight function $w : \F_p \to \C$ is defined by
\[ w(c) = \begin{cases} 1/p^{n - 1} & \text{if } c = \overline{b_n}, \\ 0 & \text{if } c \neq \overline{b_n}. \end{cases} \]
By direct computation, the Fourier transform $\hat{w}$ of $w$ has the formula
\begin{equation}\label{eq-fixconst:weightFT}
    \hat{w}(\chi) = \frac{1}{\# \F_p^{\times}} \sum_{c \in \F_p^{\times}} w(c) \chi(c)^{-1} = \frac{1}{(p - 1) p^{n - 1}} \chi(\overline{b_n})^{-1}.
\end{equation}

Now we compute $L_{\chi \circ \ev_0}(T)$ for any $\chi \in \widehat{\F_p^{\times}}$ such that $\chi \neq 1$.
Note that $1 \circ \ev_0 = \mathbf{1}_x$, where $\mathbf{1}_x : \F_p[x]_{m, x \nmid u} \to \C$ is the all-one function on $\F_p[x]_{m, x \nmid u}$.

\begin{lemma}\label{lem-fixconst:char-gen}
For any $\chi \in \widehat{\F_p^{\times}}$ such that $\chi \neq 1$, we have $L_{\chi \circ \ev_0}(T) = 1$.
\end{lemma}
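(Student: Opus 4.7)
The plan is to compute the generating series term by term and show that every positive-degree coefficient vanishes. By definition,
\[ L_{\chi \circ \ev_0}(T) = \sum_{u \in \F_p[x]_{m, x \nmid u}} \chi(u(0)) \, T^{\deg(u)} = \sum_{n \geq 0} \left(\sum_{\substack{u \in \F_p[x]_{m, x \nmid u} \\ \deg(u) = n}} \chi(u(0))\right) T^n, \]
so it suffices to show that the $n = 0$ coefficient equals $1$ and that every $n \geq 1$ coefficient equals $0$. The $n = 0$ term is immediate: the only monic polynomial of degree $0$ is $1$, and $\chi(\ev_0(1)) = \chi(1) = 1$.

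For $n \geq 1$, I will parametrize monic polynomials of degree $n$ not divisible by $x$ by their coefficient tuple: writing $u = x^n + a_1 x^{n - 1} + \ldots + a_{n - 1} x + a_n$, the condition $x \nmid u$ is exactly $a_n \neq 0$, and $\ev_0(u) = a_n$. Thus the $T^n$ coefficient factors as
\[ \sum_{\substack{u \in \F_p[x]_{m, x \nmid u} \\ \deg(u) = n}} \chi(u(0)) = \left(\sum_{a_1, \ldots, a_{n - 1} \in \F_p} 1\right) \left(\sum_{a_n \in \F_p^{\times}} \chi(a_n)\right) = p^{n - 1} \sum_{c \in \F_p^{\times}} \chi(c). \]
Since $\chi$ is a nontrivial character of $\F_p^{\times}$, orthogonality of characters gives $\sum_{c \in \F_p^{\times}} \chi(c) = 0$, so this coefficient vanishes.

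There is no real obstacle here; the lemma is a direct consequence of the fact that $\ev_0$ depends only on the constant coefficient, while the other $n - 1$ coefficients are free, combined with character orthogonality on $\F_p^{\times}$. This is precisely the unit-coefficient analogue of Lemma~\ref{lem-sub1:char-gen}, where the role of the monoid homomorphism $\varphi_1$ is played by $\ev_0$ and the role of $\F_p$ is played by $\F_p^{\times}$.
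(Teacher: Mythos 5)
Your proof is correct and matches the paper's argument: both reduce the degree-$n$ coefficient to $p^{n-1} \sum_{c \in \F_p^{\times}} \chi(c)$ by noting that exactly $p^{n-1}$ polynomials in $\F_p[x]_{m, x \nmid u}$ of degree $n$ have a given nonzero constant coefficient, and then invoke the vanishing of the nontrivial character sum. Your write-up is just slightly more explicit about the coefficient parametrization.
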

\begin{proof}
Clearly $[L_{\chi \circ \ev_0}(T)]_0 = 1$.
Now fix some $n \geq 1$.
For each $c \in \F_p^{\times}$, there exists exactly $p^{n - 1}$ polynomials in $\F_p[x]_{m, x \nmid u}$ whose constant coefficient is $c$.
Thus, we get
\[ \left[L_{\chi \circ \ev_0}(T)\right]_n = \sum_{\substack{u \in \F_p[x]_{m, x \nmid u} \\ \deg(u) = n}} \chi(u(0)) = p^{n - 1} \sum_{c \in \F_p^{\times}} \chi(c), \]
    which is equal to zero since $\chi \neq 1$.
\end{proof}

If $p \neq 2$, then the quadratic character over $\F_p^{\times}$, which we denote by $\left(\frac{\cdot}{p}\right)$, is the unique $\chi \in \widehat{\F_p^{\times}}$ such that $\chi \neq 1$ and $\chi^2 = 1$.
Thus, combining with the formula $L_{\mathbf{1}_x}(T) = (1 - T)/(1 - pT)$ from~\eqref{eq:1x-gen}, Lemma~\ref{lem-fixconst:char-gen} implies the following corollary.

\begin{corollary}\label{cor-fixconst:char-gen2}
If $p$ is odd, then for any $\chi \in \widehat{\F_p^{\times}} \setminus \{1\}$ and $z \in \C$,
\[ \frac{L_{\chi \circ \ev_0}(T)}{L_{\chi^2 \circ \ev_0}(z T^2)} = \begin{cases} 1 - \dfrac{(p - 1) zT^2}{1 - zT^2} & \text{if } \chi = \left(\frac{\cdot}{p}\right), \\ 1 & \text{if } \chi \neq \left(\frac{\cdot}{p}\right). \end{cases} \]
\end{corollary}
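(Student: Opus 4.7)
The plan is to apply Lemma~\ref{lem-fixconst:char-gen} in both the numerator and (when applicable) the denominator, splitting into cases based on whether $\chi^2$ is trivial. Since $p$ is odd, the group $\widehat{\F_p^\times}$ is cyclic of even order $p-1$, so there is exactly one non-trivial character $\chi$ with $\chi^2 = 1$, namely the Legendre symbol $\left(\frac{\cdot}{p}\right)$; every other non-trivial $\chi$ has $\chi^2 \neq 1$. This dichotomy drives the two cases in the statement.

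First I would dispose of the numerator uniformly: since $\chi \neq 1$ by hypothesis, Lemma~\ref{lem-fixconst:char-gen} gives $L_{\chi \circ \ev_0}(T) = 1$ regardless of which case we are in. For the case $\chi \neq \left(\frac{\cdot}{p}\right)$, we have $\chi^2 \neq 1$, so Lemma~\ref{lem-fixconst:char-gen} applied again to $\chi^2$ yields $L_{\chi^2 \circ \ev_0}(zT^2) = 1$, and the ratio is therefore $1$.

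For the remaining case $\chi = \left(\frac{\cdot}{p}\right)$, we have $\chi^2 = 1$ (the trivial character of $\F_p^\times$), so $\chi^2 \circ \ev_0$ is precisely $\mathbf{1}_x$, the all-one function on $\F_p[x]_{m, x \nmid u}$. Invoking~\eqref{eq:1x-gen} with $T$ replaced by $zT^2$ gives $L_{\chi^2 \circ \ev_0}(zT^2) = (1 - zT^2)/(1 - pzT^2)$, so the ratio equals $(1 - pzT^2)/(1 - zT^2)$. A one-line algebraic manipulation,
\[ \frac{1 - pzT^2}{1 - zT^2} = 1 - \frac{(p-1)zT^2}{1 - zT^2}, \]
puts this in the form asserted by the corollary.

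There is no real obstacle here; the proof is essentially bookkeeping once one observes that $\widehat{\F_p^\times}$ has a unique non-trivial character of order dividing $2$. The only place to be careful is in correctly substituting $zT^2$ for $T$ in~\eqref{eq:1x-gen} and not confusing $L_{\mathbf{1}_x}$ with $L_{\mathbf{1}}$ (the latter would correspond to all monic polynomials, not just those coprime to $x$).
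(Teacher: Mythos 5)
Your proof is correct and follows exactly the route the paper intends: apply Lemma~\ref{lem-fixconst:char-gen} to the numerator and, when $\chi^2\neq 1$, to the denominator, and in the remaining case $\chi=\left(\frac{\cdot}{p}\right)$ substitute $zT^2$ into $L_{\mathbf{1}_x}(T)=(1-T)/(1-pT)$ from~\eqref{eq:1x-gen} and simplify. Nothing is missing.
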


We are now ready to prove Theorem~\ref{thm-fixconst:main}.
We first compute $P^{\max}(V_n(\Z_p)_{a_n = b_n})$ and $P_0^{\sqf}(V_n(\Z_p)_{a_n = b_n})$.
Using the formula~\eqref{eq-fixconst:weightFT} for $\hat{w}$, Theorem~\ref{thm-density-unit:max} and Theorem~\ref{thm-density-unit:sqf-0} respectively implies
\begin{align}
    P^{\max}(V_n(\Z_p)_{a_n = b_n}) &= \frac{1}{(p - 1) p^{n - 1}} \sum_{\chi \in \widehat{\F_p^{\times}}} \chi(\overline{b_n})^{-1} \left[\frac{L_{\chi \circ \ev_0}(T)}{L_{\chi^2 \circ \ev_0}(T^2/p)}\right]_n, \label{eq-fixconst:max-init} \\
    P_0^{\sqf}(V_n(\Z_p)_{a_n = b_n}) &= \frac{1}{(p - 1) p^{n - 1}} \sum_{\chi \in \widehat{\F_p^{\times}}} \chi(\overline{b_n})^{-1} \left[\frac{L_{\chi \circ \ev_0}(T)}{L_{\chi^2 \circ \ev_0}(T^2)}\right]_n. \label{eq-fixconst:sqf-0-init}
\end{align}
The first term of the right hand side of~\eqref{eq-fixconst:max-init} is equal to $P^{\max}(V_n(\Z_p)_{p \nmid a_n})$ by~\eqref{eq-copconst:max-init}, while the first term of the right hand side of~\eqref{eq-fixconst:sqf-0-init} is equal to $P_0^{\sqf}(V_n(\Z_p)_{p \nmid a_n})$ by~\eqref{eq-copconst:sqf-0-init}.
These are the only terms of the sum if $p = 2$, implying~\eqref{eq-fixconst:max-two} and~\eqref{eq-fixconst:sqf-0-two}.
For $p$ odd, the following proposition implies that~\eqref{eq-fixconst:max-odd-deg-odd} and~\eqref{eq-fixconst:max-odd-deg-even} follow from~\eqref{eq-fixconst:max-init}, while~\eqref{eq-fixconst:sqf-0-odd-deg-odd} and~\eqref{eq-fixconst:sqf-0-odd-deg-even} follow from~\eqref{eq-fixconst:sqf-0-init}.

\begin{proposition}
If $p$ is odd, then for any $n \geq 0$ and $z \in \C$,
\[ \frac{1}{(p - 1) p^{n - 1}} \sum_{\chi \in \widehat{\F_p^{\times}} \setminus \{1\}} \chi(\overline{b_n})^{-1} \left[\frac{L_{\chi \circ \ev_0}(T)}{L_{\chi^2 \circ \ev_0}(T^2/p)}\right]_n = -\frac{1}{p^{n - 1}} \left[\frac{zT^2}{1 - zT^2}\right]_n \left(\frac{b_n}{p}\right). \]
\end{proposition}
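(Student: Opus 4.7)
The plan is to reduce the character sum on the left-hand side to a single term using Corollary~\ref{cor-fixconst:char-gen2}, which computes $L_{\chi \circ \ev_0}(T)/L_{\chi^2 \circ \ev_0}(zT^2)$ for each nontrivial character $\chi \in \widehat{\F_p^\times}$. (I am reading the $T^2/p$ on the left as the instance $z = 1/p$ of the general formula; the argument below runs identically for any $z$, which is how this proposition will be used at both $z = 1/p$ and $z = 1$.) The key observation is that, since $p$ is odd, the Legendre symbol $\left(\frac{\cdot}{p}\right)$ is the unique nontrivial character of $\F_p^\times$ whose square is trivial.

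First, I would split the sum over $\chi \in \widehat{\F_p^\times} \setminus \{1\}$ into two disjoint subsets: the single character $\chi = \left(\frac{\cdot}{p}\right)$, and the characters with $\chi^2 \neq 1$. For each $\chi$ in the latter subset, Corollary~\ref{cor-fixconst:char-gen2} states that the ratio equals the constant power series $1$, whose $n$-th coefficient is $0$ for all $n \geq 1$. Hence these terms contribute nothing to the sum when $n \geq 1$.

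Second, for the single remaining term $\chi = \left(\frac{\cdot}{p}\right)$, Corollary~\ref{cor-fixconst:char-gen2} gives
\[ \frac{L_{\chi \circ \ev_0}(T)}{L_{\chi^2 \circ \ev_0}(zT^2)} = 1 - \frac{(p-1)zT^2}{1 - zT^2}, \]
whose $n$-th coefficient, for $n \geq 1$, equals $-(p-1)\left[zT^2/(1-zT^2)\right]_n$. Since $\chi(\overline{b_n})^{-1} = \left(\frac{b_n}{p}\right)^{-1} = \left(\frac{b_n}{p}\right)$, the full inner sum reduces to
\[ -(p-1) \left(\frac{b_n}{p}\right) \left[\frac{zT^2}{1 - zT^2}\right]_n, \]
and dividing by $(p-1)p^{n-1}$ yields the desired identity.

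The proof is essentially bookkeeping once Corollary~\ref{cor-fixconst:char-gen2} is in hand; there is no hard step. The only point requiring mild care is the bound $n \geq 1$ used to discard the contributions with $\chi^2 \neq 1$: this is harmless because the proposition is invoked in~\eqref{eq-fixconst:max-init} and~\eqref{eq-fixconst:sqf-0-init} with $n \geq 2$, so small-$n$ edge cases do not interfere with the applications.
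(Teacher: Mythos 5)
Your proof is correct and follows essentially the same route as the paper's: split off the quadratic character, apply Corollary~\ref{cor-fixconst:char-gen2} to see that all other nontrivial characters contribute a coefficient of the constant series $1$ (hence zero for $n\geq 1$), and evaluate the quadratic-character term using $\left(\frac{b_n}{p}\right)^{-1}=\left(\frac{b_n}{p}\right)$. Your side remarks — reading the $T^2/p$ in the display as the instance $z=1/p$ of a general-$z$ identity, and noting that the vanishing of the $\chi^2\neq 1$ terms needs $n\geq 1$ rather than the stated $n\geq 0$ — are both accurate observations about small infelicities in the statement, and neither affects the applications at $n\geq 2$.
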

\begin{proof}
By Corollary~\ref{cor-fixconst:char-gen2}, the contribution from the quadratic character is
\[ \frac{1}{(p - 1) p^{n - 1}} \left(\frac{b_n}{p}\right)^{-1} \left[1 - \frac{(p - 1) zT^2}{1 - zT^2}\right]_n = -\frac{1}{p^{n - 1}} \left[\frac{zT^2}{1 - zT^2}\right]_n \left(\frac{b_n}{p}\right), \]
    while the contribution from the other characters is zero.
\end{proof}

Finally, we compute $P_1^{\sqf}(V_n(\Z_p)_{a_n = b_n})$.
Using the formula~\eqref{eq-fixconst:weightFT} for $\hat{w}$, Theorem~\ref{thm-density-unit:sqf-1} implies
\[ P_1^{\sqf}(V_n(\Z_p)_{a_n = b_n}) = \frac{1 - 1/p}{(p - 1) p^{n - 1}} \sum_{\chi \in \widehat{\F_p^{\times}}} \chi(\overline{b_n})^{-1} \left[\sum_{c \in \F_p^{\times}} \frac{\chi(\ev_0(x + c))^2}{1 + \chi(\ev_0(x + c)) T} \cdot \frac{L_{\chi \circ \ev_0}(T)}{L_{\chi^2 \circ \ev_0}(T^2)}\right]_{n - 2}. \]
By~\eqref{eq-copconst:sqf-1-init}, the term $\chi = 1$ contributes $P_1^{\sqf}(V_n(\Z_p)_{p \nmid a_n})$ to the right hand side.
It remains to compute the sum of the other terms.
By Corollary~\ref{cor-fixconst:char-gen2},
\begin{align*}
    & P_1^{\sqf}(V_n(\Z_p)_{a_n = b_n}) - P_1^{\sqf}(V_n(\Z_p)_{p \nmid a_n}) \\
    =\;& \frac{1 - 1/p}{(p - 1) p^{n - 1}} \sum_{\chi \in \widehat{\F_p^{\times}} \setminus \{1\}} \chi(\overline{b_n})^{-1} \left[\sum_{c \in \F_p^{\times}} \frac{\chi(\ev_0(x + c))^2}{1 + \chi(\ev_0(x + c)) T} \cdot \frac{L_{\chi \circ \ev_0}(T)}{L_{\chi^2 \circ \ev_0}(T^2)}\right]_{n - 2} \\
    =\;& \frac{1}{p^n} \sum_{\substack{\chi \in \widehat{\F_p^{\times}} \\ \chi^2 \neq 1}} \chi(\overline{b_n})^{-1} \left[\sum_{c \in \F_p^{\times}} \frac{\chi(c)^2}{1 + \chi(c) T}\right]_{n - 2} + \frac{1}{p^n} \left(\frac{b_n}{p}\right)^{-1} \left[\sum_{c \in \F_p^{\times}} \frac{\left(\frac{c}{p}\right)^2}{1 + \left(\frac{c}{p}\right) T} \left(1 - \frac{(p - 1) T^2}{1 - T^2}\right)\right]_{n - 2} \\
    =\;& \frac{1}{p^n} \sum_{\chi \in \widehat{\F_p^{\times}} \setminus \{1\}} \chi(\overline{b_n})^{-1} \left[\sum_{c \in \F_p^{\times}} \frac{\chi(c)^2}{1 + \chi(c) T}\right]_{n - 2} - \frac{1}{p^n} \left(\frac{b_n}{p}\right)^{-1} \left[\sum_{c \in \F_p^{\times}} \frac{\left(\frac{c}{p}\right)^2}{1 + \left(\frac{c}{p}\right) T} \cdot \frac{(p - 1) T^2}{1 - T^2}\right]_{n - 2} \\
    =\;& \frac{1}{p^n} \sum_{\chi \in \widehat{\F_p^{\times}} \setminus \{1\}} \chi(\overline{b_n})^{-1} \sum_{c \in \F_p^{\times}} (-1)^{n - 2} \chi(c)^n - \frac{1}{p^n} \left(\frac{b_n}{p}\right) \left[\frac{p - 1}{2} \left(\frac{1}{1 + T} + \frac{1}{1 - T}\right) \frac{(p - 1) T^2}{1 - T^2}\right]_{n - 2} \\
    =\;& \frac{(-1)^n}{p^n} (p - 1) \kappa_{n, p}(\overline{b_n}) - \frac{(p - 1)^2}{p^n} \left(\frac{b_n}{p}\right) \left[\frac{T^2}{(1 - T^2)^2}\right]_{n - 2},
\end{align*}
    where $\kappa_{n, p}$ is as defined in~\eqref{eq-fixconst:kappa}.
Finally, note that $\left[\dfrac{T^2}{(1 - T^2)^2}\right]_{n - 2}$ is equal to $0$ if $n$ is odd and $n/2 - 1$ if $n$ is even.
Substituting into the above formula yields~\eqref{eq-fixconst:sqf-1-odd-deg-odd} if $n$ is odd and~\eqref{eq-fixconst:sqf-1-odd-deg-even} if $n$ is even.
This finishes the proof of Theorem~\ref{thm-fixconst:main}.

\section{Proof of Theorem~\ref{thm-local:sub1-copconst}}\label{section:sub1-copconst}

Let $n \geq 2$ and $p$ be a prime number.
Fix $b_1 \in \Z_p$, and recall that
\begin{align*}
    V_n(\Z_p)_{a_1 = b_1, p \nmid a_n} &= \{f(x) = x^n + a_1 x^{n - 1} + \ldots + a_n \in \Z_p[x] : a_1 = b_1, p \nmid a_n\}, \\
    V_n(\Z_p)_{p \nmid a_n} &= \{f(x) = x^n + a_1 x^{n - 1} + \ldots + a_n \in \Z_p[x] : p \nmid a_n\}.
\end{align*}
We now state the exact formula for the densities of $V_n(\Z_p)_{a_1 = b_1, p \nmid a_n}$ in terms of the densities of $V_n(\Z_p)_{p \nmid a_n}$.
For any prime number $p$ and $b \in \F_p$, denote
\begin{equation}\label{eq-sub1-copconst:eta}
    \eta_p(b) = \sum_{\chi \in \widehat{\F_p} \setminus \{1\}} \chi(b)^{-1} = \begin{cases} p - 1 & \text{if } b = 0, \\ -1 & \text{if } b \neq 0. \end{cases}
\end{equation}
Then we prove the following result.

\begin{theorem}\label{thm-sub1-copconst:main}
If $p = 2$, then $P_1^{\sqf}(V_n(\Z_2)_{a_1 = b_1, 2 \nmid a_n}) = 0$ and
\begin{align}
    P^{\max}(V_n(\Z_2)_{a_1 = b_1, 2 \nmid a_n}) &= P^{\max}(V_n(\Z_2)_{2 \nmid a_n}) - \frac{(-1)^n}{2^{n + \lfloor n/2 \rfloor - 1}} \eta_2(\overline{b_1}), \label{eq-sub1-copconst:max-two} \\
    P_0^{\sqf}(V_n(\Z_2)_{a_1 = b_1, 2 \nmid a_n}) &= P_0^{\sqf}(V_n(\Z_2)_{2 \nmid a_n}) - \frac{(-1)^n}{2^{n - 1}} \eta_2(\overline{b_1}). \label{eq-sub1-copconst:sqf-0-two}
\end{align}

If $p$ is odd, then
\begin{align}
    P^{\max}(V_n(\Z_p)_{a_1 = b_1, p \nmid a_n}) &= P^{\max}(V_n(\Z_p)_{p \nmid a_n}) + \frac{(-1)^n}{(p - 1) p^{n + \lfloor n/2 \rfloor - 1}} \eta_p(\overline{b_1}), \label{eq-sub1-copconst:max-odd} \\
    P_0^{\sqf}(V_n(\Z_p)_{a_1 = b_1, p \nmid a_n}) &= P_0^{\sqf}(V_n(\Z_p)_{p \nmid a_n}) + \frac{(-1)^n}{(p - 1) p^{n - 1}} \eta_p(\overline{b_1}), \label{eq-sub1-copconst:sqf-0-odd} \\
    P_1^{\sqf}(V_n(\Z_p)_{a_1 = b_1, p \nmid a_n}) &= P_1^{\sqf}(V_n(\Z_p)_{p \nmid a_n}) - \frac{(-1)^n}{p^n} (n - 1 - p \lfloor n/p \rfloor) \; \eta_p(\overline{b_1}). \label{eq-sub1-copconst:sqf-1-odd}
\end{align}
\end{theorem}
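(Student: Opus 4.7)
The plan is to combine the strategy of \S\ref{section:sub1} with the unit-admissible framework of \S\ref{section:density-formula-unit}, paralleling \S\ref{section:copconst}. Since both squarefree-discriminant and maximality conditions depend only on $f \bmod p^2$, it suffices to work with $V_n(\Z_p)_{a_1 \equiv b_1 \bmod p^2, p \nmid a_n}$. The $x^0$-coefficient of this set is defined by congruences mod $p$ (adding a multiple of $p$ to a unit preserves the unit condition), so the hypotheses of Theorems~\ref{thm-density-unit:sqf-0}, \ref{thm-density-unit:sqf-1}, and~\ref{thm-density-unit:max} all hold for $n \geq 2$. As a unit-admissible triple I take $(G, \psi, w)$ with $G = \F_p$, $\psi$ the restriction of $\varphi_1$ from~\eqref{eq:varphi1-def} to $\F_p[x]_{m, x \nmid u}$, and $w$ supported at $\overline{b_1}$ with value $1/((p - 1) p^{n - 2})$; direct computation then gives $\hat{w}(\chi) = \chi(\overline{b_1})^{-1}/((p - 1) p^{n - 1})$.

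The key computation is $L_{\chi \circ \psi}(T)$. For $\chi = 1$ this is $L_{\mathbf{1}_x}(T) = (1 - T)/(1 - pT)$ from~\eqref{eq:1x-gen}. For $\chi \neq 1$, a direct count shows $[L_{\chi \circ \psi}(T)]_n$ vanishes for $n \geq 2$ (each value of $\varphi_1$ is attained by $(p - 1) p^{n - 2}$ polynomials of degree $n$ in $\F_p[x]_{m, x \nmid u}$, and $\sum_{c \in \F_p} \chi(c) = 0$), while degrees $0$ and $1$ contribute $1$ and $\sum_{c \in \F_p^{\times}} \chi(c) = -1$, yielding $L_{\chi \circ \psi}(T) = 1 - T$. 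The extra factor $1 - T$ compared to Lemma~\ref{lem-sub1:char-gen} arises precisely because we exclude polynomials divisible by $x$, and it is what produces the nontrivial corrections in Theorem~\ref{thm-sub1-copconst:main}.

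With these inputs, the density theorems express each density as a sum over $\widehat{\F_p}$. The $\chi = 1$ term reproduces exactly the corresponding density of $V_n(\Z_p)_{p \nmid a_n}$ via~\eqref{eq-copconst:max-init}, \eqref{eq-copconst:sqf-0-init}, and~\eqref{eq-copconst:sqf-1-init}. For $p$ odd and $\chi \neq 1$, cyclicity of $\widehat{\F_p}$ forces $\chi^2 \neq 1$, so $L_{\chi^2 \circ \psi}(zT^2) = 1 - zT^2$ and $[(1 - T)/(1 - zT^2)]_n = (-1)^n z^{\lfloor n/2 \rfloor}$; summing $\chi(\overline{b_1})^{-1}$ over $\chi \neq 1$ produces $\eta_p(\overline{b_1})$, yielding~\eqref{eq-sub1-copconst:sqf-0-odd} and~\eqref{eq-sub1-copconst:max-odd}. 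The $p = 2$ case has $\chi_2^2 = 1$, so one substitutes $L_{\chi_2^2 \circ \psi}(zT^2) = L_{\mathbf{1}_x}(zT^2)$ and simplifies via partial fractions to obtain~\eqref{eq-sub1-copconst:max-two} and~\eqref{eq-sub1-copconst:sqf-0-two}; meanwhile $P_1^{\sqf}$ at $p = 2$ vanishes by Lemma~\ref{lem:sqfree-criterion}.

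The main obstacle is $P_1^{\sqf}$ for $p$ odd. After extracting the $\chi = 1$ contribution, expanding the geometric series $\chi(c)^2 / (1 + \chi(c) T)$ and multiplying by $1/(1 + T)$ reduces the correction to $\frac{(-1)^n}{p^n} \sum_{\chi \neq 1} \chi(\overline{b_1})^{-1} \sum_{j = 2}^n \sum_{c \in \F_p^\times} \chi^j(c)$. Since every nontrivial $\chi \in \widehat{\F_p}$ has order exactly $p$, we have $\chi^j = 1$ iff $p \mid j$; the $\lfloor n/p \rfloor$ such indices in $\{2, \ldots, n\}$ each contribute $p - 1$, while the remaining $n - 1 - \lfloor n/p \rfloor$ contribute $-1$, giving a net $p \lfloor n/p \rfloor - (n - 1)$. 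Summing against $\chi(\overline{b_1})^{-1}$ produces $\eta_p(\overline{b_1})$ and yields~\eqref{eq-sub1-copconst:sqf-1-odd}.
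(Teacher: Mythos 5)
Your proposal is correct and follows essentially the same route as the paper: the same unit-admissible triple $(\F_p, \varphi_{1,x\nmid u}, w)$ with the same Fourier transform, the same key lemma $L_{\chi\circ\psi}(T)=1-T$ for $\chi\neq 1$, the same extraction of the $\chi=1$ term via \eqref{eq-copconst:max-init}, \eqref{eq-copconst:sqf-0-init}, \eqref{eq-copconst:sqf-1-init}, and the same case split on $\chi^2=1$ for $p=2$ versus $p$ odd. The only (immaterial) difference is bookkeeping in the $P_1^{\sqf}$ correction, where you expand the double geometric series and count indices $j\in\{2,\dots,n\}$ with $p\mid j$ directly, whereas the paper first packages $\sum_{c}\chi(c)^2/(1-\chi(c)T)$ in closed form — both yield $p\lfloor n/p\rfloor-(n-1)$.
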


Theorem~\ref{thm-local:sub1-copconst} follows from Theorem~\ref{thm-sub1-copconst:main}, Theorem~\ref{thm-local:copconst} on the asymptotic densities of $V_n(\Z_p)_{p \nmid a_n}$ and the estimate $\eta_p(b) = O(p)$.
To prove positivity of the densities, we just need to find a monic squarefree polynomial of degree $n$ over $\F_p$ whose constant coefficient is non-zero and whose $x^{n - 1}$-coefficient is $\overline{b_1}$.
If $p$ is odd, $x^n + \overline{b_1} x^{n - 1} + d$ works for some $d \in \F_p^{\times}$, as this polynomial has a double factor for only one possible choice of $d$.
The polynomial $x^n + \overline{b_1} x^{n - 1} + 1$ also works for $p = 2$ if either $n$ is odd or if $2 \nmid b_1$.
If $p = 2$, $n$ is even, and $2 \mid b_1$, then $x^n + x + 1$ works for $n \geq 4$.
In the exceptional case $(n, p) = 2$ with $2 \mid b_1$, all polynomials in the set $V_2(\Z_2)_{a_1 = b_1, 2 \nmid a_n}$ have discriminant divisible by $4$, but this set contains $f(x) = (x + b_1/2)^2 + 1$ and $\Z_2[x]/(f(x)) \cong \Z_2[i]$ is the maximal order of $\Q_2[x]/(f(x)) \cong \Q_2[i]$.

It remains to prove Theorem~\ref{thm-sub1-copconst:main}.
First, since squarefree discriminant and maximality are mod $p^2$ conditions, the densities for $V_n(\Z_p)_{a_1 = b_1, p \nmid a_n}$ are the same as those of the set
\[ V_n(\Z_p)_{a_1 \equiv b_1 \bmod{p^2}, p \nmid a_n} = \{f(x) = x^n + a_1 x^{n - 1} + \ldots + a_n \in \Z_p[x] : a_1 \equiv b_1 \bmod{p^2}, p \nmid a_n\}, \]
    which is defined by congruence conditions mod $p^2$.
We start by finding a $V_n(\Z_p)_{a_1 \equiv b_1 \bmod{p^2}, p \nmid a_n}$-unit-admissible triple $(G, \psi, w)$ as defined in Definition~\ref{def:unit-admissible} and computing $\hat{w}(\chi)$ and $L_{\chi \circ \psi}(T)$ for all $\chi \in \widehat{G}$.
Then we apply the formulas obtained in \S\ref{section:density-formula-unit} to prove Theorem~\ref{thm-sub1-copconst:main}.

Define the monoid homomorphism $\varphi_{1, x \nmid u} : \F_p[x]_{m, x \nmid u} \to \F_p$ by the formula
\begin{equation}\begin{split}\label{eq:varphi1-unit-def}
    \varphi_{1, x \nmid u}(x^n + a_1 x^{n - 1} + \ldots + a_n) &= a_1, \\
    \varphi_{1, x \nmid u}(1) &= 0.
\end{split}\end{equation}
Note that this map is just the restriction of the map $\varphi_1$ defined in~\eqref{eq:varphi1-def}.
It is easy to see that for any $u \in \F_p[x]_{m, x \nmid u}$ of degree $n$,
\[ \frac{\lambda_p(\{f \in V_n(\Z_p)_{a_1 \equiv b_1 \bmod{p^2}, p \nmid a_n} : \overline{f} = u\})}{\lambda_p(V_n(\Z_p)_{a_1 \equiv b_1 \bmod{p^2}, p \nmid a_n})} = \begin{cases} 1/((p - 1) p^{n - 2}) & \text{if } \varphi_{1, x \nmid u}(u) = \overline{b_1}, \\ 0 & \text{if } \varphi_{1, x \nmid u}(u) \neq \overline{b_1}. \end{cases} \]
Thus, $(\F_p, \varphi_{1, x \nmid u}, w)$ is $V_n(\Z_p)_{a_1 \equiv b_1 \bmod{p^2}, p \nmid a_n}$-unit-admissible, where the weight function $w : \F_p \to \C$ is defined by
\[ w(c) = \begin{cases} 1/((p - 1) p^{n - 2}) & \text{if } c = \overline{b_1}, \\ 0 & \text{if } c \neq \overline{b_1}. \end{cases} \]
By direct computation, the Fourier transform $\hat{w}$ of $w$ has the formula
\begin{equation}\label{eq-sub1-copconst:weightFT}
    \hat{w}(\chi) = \frac{1}{\# \F_p} \sum_{c \in \F_p} w(c) \chi(c)^{-1} = \frac{1}{(p - 1) p^{n - 1}} \chi(\overline{b_1})^{-1}.
\end{equation}

Now we compute $L_{\chi \circ \varphi_{1, x \nmid u}}(T)$ for any $\chi \in \widehat{\F_p}$ such that $\chi \neq 1$.
Note that $1 \circ \varphi_{1, x \nmid u} = \mathbf{1}_x$, where $\mathbf{1}_x : \F_p[x]_{m, x \nmid u} \to \C$ is the all-one function on $\F_p[x]_{m, x \nmid u}$.

\begin{lemma}\label{lem-sub1-copconst:char-gen}
For any $\chi \in \widehat{\F_p}$ such that $\chi \neq 1$, we have $L_{\chi \circ \varphi_{1, x \nmid u}}(T) = 1 - T$.
\end{lemma}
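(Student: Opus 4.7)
The plan is to compute $[L_{\chi \circ \varphi_{1, x \nmid u}}(T)]_n$ by a direct count, in parallel with the proof of Lemma~\ref{lem-sub1:char-gen}; the only modification is that the set $\F_p[x]_{m, x \nmid u}$ omits polynomials divisible by $x$, and I expect this omission to affect only the degree-$1$ count.

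First I would handle the low degrees by inspection. For $n = 0$ the only element of $\F_p[x]_{m, x \nmid u}$ is $u = 1$, with $\varphi_{1, x \nmid u}(1) = 0$, contributing $\chi(0) = 1$. For $n = 1$ the elements are $x + c$ with $c \in \F_p^{\times}$ and $\varphi_{1, x \nmid u}(x + c) = c$, so the $T$-coefficient is
\[ \sum_{c \in \F_p^{\times}} \chi(c) = \sum_{c \in \F_p} \chi(c) - \chi(0) = -1, \]
    using $\chi \neq 1$ to make the full character sum vanish. This is exactly the place where the restriction $x \nmid u$ produces a departure from Lemma~\ref{lem-sub1:char-gen}.

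For $n \geq 2$ I would group the polynomials $u = x^n + a_1 x^{n - 1} + \ldots + a_n$ in $\F_p[x]_{m, x \nmid u}$ by the value of $a_1 \in \F_p$. Since the constraint $a_n \in \F_p^{\times}$ is independent of $a_1$ and the intermediate $a_2, \ldots, a_{n - 1}$ are unconstrained, each value of $a_1$ admits exactly $(p - 1) p^{n - 2}$ such polynomials. Hence
\[ [L_{\chi \circ \varphi_{1, x \nmid u}}(T)]_n = (p - 1) p^{n - 2} \sum_{a_1 \in \F_p} \chi(a_1) = 0, \]
    again by $\chi \neq 1$.

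Assembling the three cases yields $L_{\chi \circ \varphi_{1, x \nmid u}}(T) = 1 - T$. I do not anticipate a real obstacle: the exclusion of polynomials divisible by $x$ simply removes the single element $u = x$ from the degree-$1$ count (contributing the $-T$ correction relative to Lemma~\ref{lem-sub1:char-gen}), and preserves the even distribution of $a_1$ values across $\F_p$ at every degree $n \geq 2$ because the non-vanishing of $a_n$ decouples from $a_1$.
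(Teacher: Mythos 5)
Your proposal is correct and is essentially identical to the paper's proof: both verify the coefficients in degrees $0$ and $1$ directly (with the restriction $c \in \F_p^{\times}$ producing the $-1$), and both use the count of $(p-1)p^{n-2}$ polynomials per value of $a_1$ together with the vanishing of the additive character sum for $n \geq 2$. No gaps.
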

\begin{proof}
Clearly, $\left[L_{\chi \circ \varphi_{1, x \nmid u}}(T)\right]_0 = 1$ and since $\chi \neq 1$,
\[ \left[L_{\chi \circ \varphi_{1, x \nmid u}}(T)\right]_1 = \sum_{c \in \F_p^{\times}} \chi(c) = -1 + \sum_{c \in \F_p} \chi(c) = -1. \]
Now suppose that $n \geq 2$.
For each $c \in \F_p$, there exists exactly $(p - 1) p^{n - 2}$ polynomials in $\F_p[x]_{m, x \nmid u}$ whose $x^{n - 1}$-coefficient is equal to $c$.
As a result, we get
\[ \left[L_{\chi \circ \varphi_{1, x \nmid u}}(T)\right]_n = \sum_{\substack{u \in \F_p[x]_{m, x \nmid u} \\ \deg(u) = n}} \chi(\varphi_1(u)) = (p - 1) p^{n - 2} \sum_{c \in \F_p} \chi(c), \]
    which is equal to zero since $\chi \neq 1$.
\end{proof}

We are now ready to prove Theorem~\ref{thm-sub1-copconst:main}.
We first compute the densities $P^{\max}(V_n(\Z_p)_{a_1 = b_1, p \nmid a_n})$ and $P_0^{\sqf}(V_n(\Z_p)_{a_1 = b_1, p \nmid a_n})$.
Using the formula~\eqref{eq-sub1-copconst:weightFT} for $\hat{w}$, Theorem~\ref{thm-density-unit:max} and Theorem~\ref{thm-density-unit:sqf-0} respectively yields
\begin{align}
    P^{\max}(V_n(\Z_p)_{a_1 = b_1, p \nmid a_n}) &= \frac{1}{(p - 1) p^{n - 1}} \sum_{\chi \in \widehat{\F_p}} \chi(\overline{b_1})^{-1} \left[\frac{L_{\chi \circ \varphi_{1, x \nmid u}}(T)}{L_{\chi^2 \circ \varphi_{1, x \nmid u}}(T^2/p)}\right]_n, \label{eq-sub1-copconst:max-init} \\
    P_0^{\sqf}(V_n(\Z_p)_{a_1 = b_1, p \nmid a_n}) &= \frac{1}{(p - 1) p^{n - 1}} \sum_{\chi \in \widehat{\F_p}} \chi(\overline{b_1})^{-1} \left[\frac{L_{\chi \circ \varphi_{1, x \nmid u}}(T)}{L_{\chi^2 \circ \varphi_{1, x \nmid u}}(T^2)}\right]_n. \label{eq-sub1-copconst:sqf-0-init}
\end{align}
The contribution of the term $\chi = 1$ in~\ref{eq-sub1-copconst:max-init} is equal to $P^{\max}(V_n(\Z_p)_{p \nmid a_n})$ by~\eqref{eq-copconst:max-init}.
The contribution of the term $\chi = 1$ in~\ref{eq-sub1-copconst:sqf-0-init} is equal to $P_0^{\sqf}(V_n(\Z_p)_{p \nmid a_n})$ by~\eqref{eq-copconst:sqf-0-init}.
Thus the following proposition implies~\eqref{eq-sub1-copconst:max-two} and~\eqref{eq-sub1-copconst:sqf-0-two} for $p = 2$, and it implies~\eqref{eq-sub1-copconst:max-odd} and~\eqref{eq-sub1-copconst:sqf-0-odd} for $p$ odd.

\begin{proposition}
For any $n \geq 2$, prime number $p$, and $z \in \C$,
\[ \sum_{\chi \in \widehat{\F_p} \setminus \{1\}} \chi(\overline{b_1})^{-1} \left[\frac{L_{\chi \circ \varphi_{1, x \nmid u}}(T)}{L_{\chi^2 \circ \varphi_{1, x \nmid u}}(zT^2)}\right]_n = \begin{cases} -(-1)^n z^{\lfloor n/2 \rfloor} \eta_2(\overline{b_1}) & \text{if } p = 2, \\ (-1)^n z^{\lfloor n/2 \rfloor} \eta_p(\overline{b_1}) & \text{if } p > 2. \end{cases} \]
\end{proposition}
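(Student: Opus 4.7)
The proof reduces to a routine coefficient extraction in a rational power series, enabled by Lemma~\ref{lem-sub1-copconst:char-gen}. The plan is to observe that for every nontrivial $\chi \in \widehat{\F_p}$ that lemma gives $L_{\chi \circ \varphi_{1, x \nmid u}}(T) = 1 - T$, so the $\chi$-dependence inside the bracket enters only through $\chi^2$ in the denominator, and then split into two cases according to whether $\chi^2$ can be trivial for some nontrivial $\chi$. The definition of $\eta_p$ in~\eqref{eq-sub1-copconst:eta} then collapses the remaining character sum.

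For $p$ odd, the characters of the additive group $\F_p$ take values in the $p$-th roots of unity, so $\chi^2 = 1$ forces $\chi(c) \in \{\pm 1\}$ for every $c \in \F_p$, which is possible only when $\chi = 1$. Hence for every nontrivial $\chi$ the denominator $L_{\chi^2 \circ \varphi_{1, x \nmid u}}(zT^2)$ also equals $1 - zT^2$ by the same lemma, and the bracket becomes
\[ \left[\frac{1 - T}{1 - zT^2}\right]_n. \]
A direct series expansion gives this coefficient as $(-1)^n z^{\lfloor n/2 \rfloor}$, independent of $\chi$, so the sum factors as $(-1)^n z^{\lfloor n/2 \rfloor} \sum_{\chi \neq 1} \chi(\overline{b_1})^{-1} = (-1)^n z^{\lfloor n/2 \rfloor} \eta_p(\overline{b_1})$.

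For $p = 2$, the unique nontrivial character $\chi_2$ satisfies $\chi_2^2 = 1$, so $\chi_2^2 \circ \varphi_{1, x \nmid u} = \mathbf{1}_x$, and by~\eqref{eq:1x-gen} the denominator equals $(1 - zT^2)/(1 - 2zT^2)$. The bracket becomes the $T^n$-coefficient of $(1 - T)(1 - 2zT^2)/(1 - zT^2)$, which a short expansion for $n \geq 2$ evaluates to $-(-1)^n z^{\lfloor n/2 \rfloor}$. Combining with the identity $\chi_2(\overline{b_1})^{-1} = \eta_2(\overline{b_1})$, verified directly on the two values $\overline{b_1} \in \{0, 1\}$, yields the second case.

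The only point requiring care, and essentially the only obstacle, is that the dichotomy $\chi^2 = 1$ versus $\chi^2 \neq 1$ for nontrivial $\chi$ behaves oppositely between $p$ odd and $p = 2$, which is precisely what produces the sign flip between the two cases of the proposition.
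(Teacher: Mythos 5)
Your proposal is correct and follows essentially the same route as the paper: apply Lemma~\ref{lem-sub1-copconst:char-gen} to every nontrivial character, split on whether $\chi^2$ can be trivial (impossible for $p$ odd, forced for $p=2$), use~\eqref{eq:1x-gen} for the $p=2$ denominator, and collapse the remaining character sum via $\eta_p$. The coefficient extractions $\bigl[(1-T)/(1-zT^2)\bigr]_n = (-1)^n z^{\lfloor n/2\rfloor}$ and $\bigl[(1-T)(1-2zT^2)/(1-zT^2)\bigr]_n = -(-1)^n z^{\lfloor n/2\rfloor}$ match the paper's computation exactly.
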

\begin{proof}
If $p = 2$, then $\widehat{\F_2} = \{1, \chi_2\}$, where $\chi_2$ is the character defined by $\chi_2(0) = 1$ and $\chi_2(1) = -1$.
It is easy to check that $\chi_2(b)^{-1} = \eta_2(b)$ for any $b \in \F_2$.
Therefore, by Lemma~\ref{lem-sub1-copconst:char-gen} and the formula $L_{\mathbf{1}_x}(T) = (1 - T)/(1 - pT)$ shown in~\eqref{eq:1x-gen}, we get
\[ \sum_{\chi \in \widehat{\F_2} \setminus \{1\}} \chi(\overline{b_1})^{-1} \left[\frac{L_{\chi \circ \varphi_{1, x \nmid u}}(T)}{L_{\chi^2 \circ \varphi_{1, x \nmid u}}(zT^2)}\right]_n = \left[\frac{(1 - T)(1 - 2zT^2)}{1 - zT^2}\right]_n \eta_2(\overline{b_1}). \]
The proposition follows since for any $n \geq 2$,
\[ \left[\frac{(1 - T)(1 - 2zT^2)}{1 - zT^2}\right]_n = \left[2(1 - T) - \frac{1 - T}{1 - zT^2}\right]_n = -\left[\frac{1 - T}{1 - zT^2}\right]_n = -(-1)^n z^{\lfloor n/2 \rfloor}. \]
If $p$ is odd, then $\chi \neq 1$ implies $\chi^2 \neq 1$, and so Lemma~\ref{lem-sub1-copconst:char-gen} implies
\[ \sum_{\chi \in \widehat{\F_p} \setminus \{1\}} \chi(\overline{b_1})^{-1} \left[\frac{L_{\chi \circ \varphi_{1, x \nmid u}}(T)}{L_{\chi^2 \circ \varphi_{1, x \nmid u}}(zT^2)}\right]_n = \sum_{\chi \in \widehat{\F_p} \setminus \{1\}} \chi(\overline{b_1})^{-1} \left[\frac{1 - T}{1 - zT^2}\right]_n = (-1)^n z^{\lfloor n/2 \rfloor} \eta_p(\overline{b_1}). \]
This proves the proposition.
\end{proof}

Finally, we compute $P_1^{\sqf}(V_n(\Z_p)_{a_1 = b_1, p \nmid a_n})$.
By Theorem~\ref{thm-density-unit:sqf-1} with the formula~\eqref{eq-sub1-copconst:weightFT} for $\hat{w}$, we get
\[ P_1^{\sqf}(V_n(\Z_p)_{a_1 = b_1, p \nmid a_n}) = \frac{1 - 1/p}{(p - 1) p^{n - 1}} \sum_{\chi \in \widehat{\F_p}} \chi(\overline{b_1})^{-1} \left[\sum_{c \in \F_p^{\times}} \frac{\chi(\varphi_{1, x \nmid u}(x + c))^2}{1 + \chi(\varphi_{1, x \nmid u}(x + c)) \, T} \cdot \frac{L_{\chi \circ \varphi_{1, x \nmid u}}(T)}{L_{\chi^2 \circ \varphi_{1, x \nmid u}}(T^2)}\right]_{n - 2}. \]
By~\eqref{eq-copconst:sqf-1-init}, the term $\chi = 1$ contributes $P_1^{\sqf}(V_n(\Z_p)_{p \nmid a_n})$ to the right hand side.
It remains to compute the sum of the other terms.
By Lemma~\ref{lem-sub1-copconst:char-gen},
\begin{align*}
    P_1^{\sqf}(V_n(\Z_p)_{a_1 = b_1, p \nmid a_n}) - P_1^{\sqf}(V_n(\Z_p)_{p \nmid a_n})
    &= \frac{1}{p^n} \sum_{\chi \in \widehat{\F_p} \setminus \{1\}} \chi(\overline{b_1})^{-1} \left[\sum_{c \in \F_p^{\times}} \frac{\chi(c)^2}{1 + \chi(c) \, T} \cdot \frac{1 - T}{1 - T^2}\right]_{n - 2} \\
    &= \frac{(-1)^{n - 2}}{p^n} \sum_{\chi \in \widehat{\F_p} \setminus \{1\}} \chi(\overline{b_1})^{-1} \left[\sum_{c \in \F_p^{\times}} \frac{\chi(c)^2}{1 - \chi(c) \, T} \cdot \frac{1 + T}{1 - T^2}\right]_{n - 2} \\
    &= \frac{(-1)^n}{p^n} \sum_{\chi \in \widehat{\F_p} \setminus \{1\}} \chi(\overline{b_1})^{-1} \left[\left(\frac{pT^{p - 2}}{1 - T^p} - \frac{1}{1 - T}\right) \frac{1}{1 - T}\right]_{n - 2} \\
    &= -\frac{(-1)^n}{p^n} \left(\left[\frac{1}{(1 - T)^2}\right]_{n - 2} - p \left[\frac{T^{p - 2}}{(1 - T^p)(1 - T)}\right]_{n - 2}\right) \eta_p(\overline{b_1}) \\
    &= -\frac{(-1)^n}{p^n} \left(n - 1 - p \sum_{k = 1}^{\infty} \left[\frac{T^{pk - 2}}{1 - T}\right]_{n - 2}\right) \eta_p(\overline{b_1}) \\
    &= -\frac{(-1)^n}{p^n} (n - 1 - p \lfloor n/p \rfloor) \; \eta_p(\overline{b_1}),
\end{align*}
    where the second line follows by replacing $T$ with $-T$, thus multiplying the $T^{n - 2}$-coefficient of the corresponding power series by $(-1)^{n - 2}$.
This finishes the proof of Theorem~\ref{thm-sub1-copconst:main}.

\subsection*{Acknowledgements}

The author would like to thank Jerry Wang for many helpful feedbacks on this paper.
The author is supported by an NSERC Discovery Grant.

\end{document}